 \patchcmd\Gread@eps{\@inputcheck#1 }{\@inputcheck"#1"\relax}{}{}
\theoremstyle{plain}
  \newtheorem{theorem}{Theorem}[section]
  \newtheorem{lemma}{Lemma}[section]
  \newtheorem{corollary}{Corollary}[section]
\theoremstyle{definition}
\theoremstyle{remark} 
  \newtheorem{remark}{Remark}[section]
   \newcommand{\beqn}{\begin{eqnarray}}
   \newcommand{\eeqn}{\end{eqnarray}}
   \newcommand{\beqs}{\begin{eqnarray*}}
   \newcommand{\eeqs}{\end{eqnarray*}}
   \newcommand{\ban}{\begin{eqnarray*}}
   \newcommand{\nan}{\end{eqnarray*}}
   \newcommand{\beq}{\begin{equation}}
   \newcommand{\eeq}{\end{equation}}
  \newcommand{\RR}{{\mathbb R}}
  \newcommand{\R}{\RR}
\newcommand{\p}{\partial}
\newcommand{\eps}{\varepsilon}
\newcommand{\Om}{\Omega}
\newcommand{\pom}{{\p\Om}}
\newcommand{\bom}{{\overline\Om}}
\renewcommand{\det}{\mbox{det}}
  \newcommand{\dist}{\mbox{dist}}
  \newcommand{\Vol}{{\mbox{Vol}}}
\newcommand{\lan}{\langle}
\newcommand{\ran}{\rangle}
  \numberwithin{equation}{section}
  \numberwithin{figure}{section}
\begin{document}

\title[Global regularity for the Monge-Amp\`ere equation]
{Global regularity for the Monge-Amp\`ere equation \\ with natural boundary condition}

\date\today

\author[S. Chen]
{Shibing Chen}
\address
{Centre for Mathematics and Its Applications,
The Australian National University,
Canberra, ACT 0200, AUSTRALIA}
\email{chenshibing1982@hotmail.com}

\author[J. Liu]
{Jiakun Liu}
\address
	{
	School of Mathematics and Applied Statistics,
	University of Wollongong,
	Wollongong, NSW 2522, AUSTRALIA}
\email{jiakunl@uow.edu.au}

\author[X.-J. Wang]
{Xu-Jia Wang}
\address
{Centre for Mathematics and Its Applications,
The Australian National University,
Canberra, ACT 0200, AUSTRALIA}
\email{Xu-Jia.Wang@anu.edu.au}

\thanks{This work was supported by ARC FL130100118 and ARC DP170100929.}

\subjclass[2000]{35J96, 35J25, 35B65.}

\keywords{Monge-Amp\`ere equation, global regularity.}

\begin{abstract}
In this paper, we establish the global $C^{2,\alpha}$  and $W^{2,p}$ regularity for the Monge-Amp\`ere equation 
$\det\, D^2u = f$ subject to  boundary condition $Du(\Omega) = \Omega^*$, 
where $\Om$ and $\Om^*$ are bounded convex domains in the Euclidean space $\R^n$ with $C^{1,1}$ boundaries,
and $f$ is a H\"older continuous function.
This boundary value problem arises naturally in optimal transportation and many other applications.
\end{abstract}

\maketitle

\baselineskip=15.6pt
\parskip=3pt

\section{Introduction}

In this paper we establish the global $C^{2,\alpha}$ and $W^{2,p}$ regularity for the Monge-Amp\`ere equation
	\begin{equation}\label{MA1}
		\det\, D^2u(x) = f(x)
	\end{equation}
subject to the boundary condition
	\begin{equation}\label{bdry}
		Du(\Omega) = \Omega^* , 
	\end{equation}
where $\Omega$, $\Omega^*$ are bounded convex domains in $\mathbb{R}^n$ with $C^{1,1}$ boundary, and
 $f $ is a positive function. 
 We also assume that $f\in C^0 (\bom)$ for the global $W^{2,p}$ estimate ($p\ge 1$);
 and $f\in C^\alpha(\bom)$ for the global $C^{2,\alpha}$ estimate ($\alpha\in (0,1)$).

The boundary value problem \eqref{MA1} and \eqref{bdry} arises naturally in optimal transportation with the quadratic cost function.
It is a fundamental problem  in the area and received much attention due to its wide range of applications, 
such as in fluid mechanics, meteorology, image recognition, reflector design, and also in geometry and probability \cite{DF, E99, RR, V1, V2}.
In particular it was recently found that 
the problem \eqref{MA1} and \eqref{bdry} plays a  fundamental role in Wasserstein generative adversarial networks, 
a fast growing technique in machine learning \cite{LGA}. 
The existence and uniqueness of solutions to the problem  \eqref{MA1} and \eqref{bdry}
were obtained  by Brenier in his pioneering work \cite{Bre91}.
Since then the regularity of solutions has been a focus of attention in this area \cite {V1, V2}, 
and has been studied in \cite{C92, C92a, C96, D91, U1}.
When $\Om$ and $\Om^*$ are bounded convex domains, and $f\ge 0$ satisfies the doubling condition,
the global $C^{1,\alpha}$ regularity for the solution was obtained by Caffarelli \cite {C92}.
In a landmark paper \cite{C96},  
Caffarelli established the global $C^{2,\alpha'}$ regularity 
for the problem \eqref{MA1} and \eqref{bdry},
assuming that $\Om$ and $\Om^*$ are uniformly convex with $C^2$ boundary, $f\in C^\alpha(\bom)$ and $f>0$.
When $\Om$ and $\Om^*$ are uniformly convex and $C^{3,1}$ smooth, and $f\in C^{1,1}(\bom)$, 
the global smooth solution was first obtained by Delano\"e \cite{D91} in 1991 for dimension two 
and later extended to high dimensions by Urbas \cite{U1}.
The results of Caffarelli, Delano\"e, and Urbas were used by Brendle and Warren \cite {Br1, Br2} to study the 
minimal Lagrangian graphs. 
These results may also be applied to the problem of convex hypersurfaces with prescribed spherical map \cite{P69}.

The uniform convexity of domains is a natural condition for the regularity of solutions 
to boundary value problems of the Monge-Amp\`ere equation. 
In fact, the uniform convexity is a necessary condition for the global regularity
of solutions to the Dirichlet problem \cite {CNS, Sa, TW08a}. 
It was used extensively and played a critical role in the proof 
for both the Dirichlet problem and the second boundary value problem \eqref{MA1}, \eqref{bdry} 
in the above mentioned papers \cite{C96, CNS, D91, Sa, TW08a,U1},
and also in the paper on the Neumann problem \cite{LTU}.

Surprisingly, we found that for the boundary value problem  \eqref{MA1} and \eqref{bdry},
the uniform convexity of domains can be dropped. 
In this paper we obtain the global $C^{2,\alpha}$ regularity for the problem \eqref{MA1} and \eqref{bdry},
assuming both $\Om$ and $\Om^*$ are convex only (instead of uniformly convex).
From \cite{C92a}, \cite[\S7.3]{MTW} it is known that for arbitrary positive and smooth functions $f$, 
the convexity of domains is necessary for the global $C^1$ regularity. 

Not only the uniform convexity of domains can be dropped,  
we also prove that the boundary smoothness can be reduced to $C^{1,1}$. 
Note that if the boundaries are $C^2$ and uniformly convex, 
they will become quadratic polynomials after blowing-up, 
but if the boundaries are only $C^{1,1}$, 
we have to deal with the possibility that limit shape is not even $C^1$ smooth after the blowing-up.
This is the situation that gives rise to substantial difficulties in our proof (see Remark \ref{r42}). 
By blowing-up, we mean to normalise a sequence of sub-level sets of the solution.

Under the above assumptions on domains, in this paper 
we obtain the sharp boundary $C^{2,\alpha}$ regularity 
when $f\in C^\alpha(\bom)$ and $f>0$;
and $C^2$ regularity when $f$ is Dini continuous.
For the Dirichlet problem, the sharp boundary $C^{2,\alpha}$ estimate  was obtained in \cite{TW08a, Sa}, 
and the interior $C^{2,\alpha}$ estimate was obtained in \cite{C1}.

\begin{theorem}\label{main}
Assume that $\Omega$ and $\Omega^*$ are bounded convex domains in $\mathbb{R}^n$ with $C^{1,1}$ boundary, and 
assume that $f\in C^\alpha(\bom)$ is positive, for some $\alpha\in(0,1)$.
Let $u$ be a convex solution to  \eqref{MA1} and \eqref{bdry}. Then we have the estimate
\beq\label{esti}
\|u\|_{C^{2,\alpha}(\bom)}\le C,
\eeq
where $C$ is a constant depending only $n, \alpha, f, \Om$, and $\Om^*$.
\end{theorem}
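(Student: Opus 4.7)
\textbf{Proof proposal for Theorem \ref{main}.}

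The plan is to reduce the global $C^{2,\alpha}$ bound to a boundary pointwise $C^{2,\alpha}$ estimate at every $x_0\in\pom$, since Caffarelli's results \cite{C92,C92a} together with the convexity of $\Om,\Om^*$ already give strict convexity and global $C^{1,\alpha}$ regularity, and the interior $C^{2,\alpha}$ estimate then follows from \cite{C1}. So the entire content lies in estimating $D^2u$ up to $\pom$. Dualising via the Legendre transform $u^*$, which solves an analogous Monge--Amp\`ere equation on $\Om^*$ with density $1/f\circ Du^*$, I will work symmetrically with the pair $(u,u^*)$; in particular $Du$ and $Du^*$ pair boundary points of $\Om$ to boundary points of $\Om^*$.

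First I would carry out the standard localisation at $x_0\in\pom$: after an affine change of variables, take $x_0=0$, let $\pom$ locally be the graph of a convex $C^{1,1}$ function $\rho$ with $\rho(0)=0$, $D\rho(0)=0$, and subtract a supporting affine function so that $u\ge 0=u(0)$. Consider the boundary sections $S_h=\{x\in\bom:u(x)<h\}$, normalise them by affine maps $T_h$ provided by John's lemma, and introduce rescaled solutions $u_h(y)=h^{-1}u(T_h^{-1}y)$ on $\Omega_h=T_h(S_h)$, with dual domains $\Omega^*_h=(\det T_h)^{-1}T_h^{-T}(Du(S_h))$. The key technical step is to show that the John affine factors remain \emph{balanced}, i.e.\ $\Omega_h$ and $\Omega^*_h$ are both uniformly comparable to balls; this is the analogue of Caffarelli's good-shape lemma for sections, but here we must prove it without the uniform convexity of $\pom$ or $\partial\Om^*$.

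The main obstacle is precisely this good-shape property, and it is where the reduction from $C^2$ uniformly convex boundary to $C^{1,1}$ convex boundary forces new work. The strategy is a compactness/blow-up contradiction argument: if balancedness fails along some sequence $h_k\to 0$, pass to a limit $(u_\infty,\Om_\infty,\Om_\infty^*)$, where $u_\infty$ is a global convex solution of $\det D^2u_\infty=f(0)$ on a limiting convex domain $\Om_\infty$ satisfying $Du_\infty(\Om_\infty)=\Om_\infty^*$, with both limit domains convex but possibly \emph{not} $C^1$ (they may have corners coming from the blow-down of the $C^{1,1}$ boundary). I would then prove a Liouville-type classification showing any such $u_\infty$ must be a quadratic polynomial on a half-space: use the symmetric role of $(u_\infty,u_\infty^*)$ and the duality of boundary points to show that any exposed corner of $\Om_\infty$ would force a corresponding singular exposed face in $\Om_\infty^*$, contradict the $C^{1,\alpha}$ regularity that has already been established pre-limit, and so rule out the non-smooth limits. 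See also Remark \ref{r41} of the paper for why the usual Pogorelov/Caffarelli argument breaks down here.

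Granted this balancedness, I would close the argument by Caffarelli's perturbation and iteration scheme: on each normalised section, $u_h$ is uniformly close to the solution of the constant-coefficient problem $\det D^2 v=f(0)$ with affine boundary data on a normalised convex domain, and using $f\in C^\alpha$ together with the balanced geometry one obtains $|u(x)-q_{x_0}(x)|\le C|x-x_0|^{2+\alpha}$ for a quadratic polynomial $q_{x_0}$ matching $u$ to second order at $x_0$. Combined with strict convexity and the interior estimate, this yields \eqref{esti}. The heart of the proof is therefore the blow-up/classification step; everything else is adaptation of machinery already present in \cite{C92,C96,C1,Sa,TW08a} to the weaker geometric setting.
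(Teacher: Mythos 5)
Your outline correctly identifies the general skeleton of the argument --- localise at a boundary point, normalise boundary sections, establish a ``good shape'' property for them, then run Caffarelli's perturbation/iteration scheme and glue with the interior estimate of \cite{C1}. That is indeed the frame of \S6. However, you have missed the single most important new ingredient of the paper, and your substitute for it does not work.

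\textbf{The missing piece is the uniform obliqueness, Lemma \ref{lem obli}.} The good-shape (``balanced'') property of the boundary sections $S^c_h$ does not come for free from compactness: it requires two separate inputs --- the uniform density (Lemma \ref{luni}) and, crucially, the estimate
$\langle \nu(0),\nu^*(Du(0))\rangle\ge\mu>0$. If the inner normals of $\Omega$ at $0$ and of $\Omega^*$ at $Du(0)$ were allowed to become orthogonal, the sections $S^c_h[u]$ degenerate (they stretch in one tangential direction) and no affine normalisation keeps them balanced. Your proposal never addresses obliqueness, whereas the paper devotes all of \S4--\S5 to it, and explicitly calls it the key estimate. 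Without it, the ``balancedness'' you invoke in the perturbation step is unjustified.

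\textbf{Your blow-up/Liouville argument to rule out unbalancedness is not sound.} You propose that a corner in the limit domain $\Omega_\infty$ would contradict the pre-limit $C^{1,\alpha}$ regularity of $u$. This is false: the rescaled boundaries $\mathcal{T}_h(\pom)$ need not converge to a $C^1$ hypersurface, and indeed the paper explicitly points out (Remark \ref{r41}) that under the $C^{1,1}$ hypothesis the blow-up limit of $\pom$ can fail to be $C^1$, in contrast to the $C^2$ uniformly convex case of \cite{C96}. The paper does \emph{not} rule out non-smooth limits; instead it constructs a smooth approximating family $\{u_k,\pom_k,\pom^*_k\}$ (Lemma \ref{2dapp} and Lemma \ref{high smooth}), establishes obliqueness along that family (Lemmas \ref{1p}, \ref{ndob}), and only then derives a contradiction. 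The contradiction itself is obtained through the auxiliary function $w=u_1+u-x_1u_1$ (resp.\ $w=u_1+K(u-\tfrac{n}{2}x_1u_1)$ in higher dimensions), which solves the homogeneous linearised equation $U^{ij}D_{ij}w=0$ after the blow-up makes $f$ constant; the maximum principle then forces $\underline{w}(t)=\inf_{x'} w(t,x')$ to be concave, and the decay estimates \eqref{key1}--\eqref{key2} force $\underline{w}\equiv 0$, which is impossible. This auxiliary-function-plus-concavity mechanism is the heart of the proof, and nothing in your outline replaces it. I'd encourage you to read \S4.4 and the surrounding Remark \ref{rm44}: they explain both why a straightforward ``classify-the-limit'' approach breaks down (the approximating domains cannot be taken uniformly convex without losing the flatness needed in higher dimensions) and what the correct substitute is.

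Finally, a smaller but real issue: the perturbation step in \S6 is not a direct transplant of Caffarelli's argument. The paper reflects $u$ evenly across a plane $\{x_n=a_h\}$ to form $D_h$ (Lemma \ref{newgeo}, Lemma \ref{lemL}), which exploits the obliqueness to get the comparison with the constant-RHS solution $w$ via a Neumann-type barrier on the flat face. Your description (``close to the solution of the constant-coefficient problem with affine boundary data'') elides this reflection device, which is what allows the argument to handle $C^{1,1}$ rather than $C^2$ boundaries and gives the sharp exponent $\alpha$ rather than some $\alpha'<\alpha$.
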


Our argument also leads to the global $W^{2,p}$ estimate for the solution.

\begin{theorem}\label{main01}
Assume that $\Omega$ and $\Omega^*$ are bounded convex domains in $\mathbb{R}^n$ with $C^{1,1}$ boundary, and 
assume that $f\in C^0(\bom)$ is positive.
Let $u$ be a convex solution to  \eqref{MA1} and \eqref{bdry}. Then we have the estimate
\beq\label{esti01}
\|u\|_{W^{2,p}(\bom)}\le C
\eeq
for all $p\ge 1$,
where $C$ is a constant depending only $n, p, f, \Om$, and $\Om^*$.
\end{theorem}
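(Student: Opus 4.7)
The plan is to combine the boundary section geometry developed in the proof of Theorem \ref{main} with Caffarelli's density-decay strategy for $W^{2,p}$ estimates, executed up to $\pom$ in the spirit of Savin's boundary methods. Since $f$ is only continuous, Theorem \ref{main} cannot be invoked directly for $u$; instead it is used on a constant right-hand side auxiliary problem that approximates $u$ on small sections.

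The geometric input, extracted from the arguments behind Theorem \ref{main}, is the following. For every $x_0\in\bom$ the centered sections $S_h(x_0)=\{x:u(x)<\ell_{x_0}(x)+h\}$ admit balanced affine normalizations $T_h$ such that $T_h(S_h(x_0))$ is comparable to a unit ball (or a unit half-ball when $x_0\in\pom$), $|S_h|\asymp h^{n/2}$, and the engulfing relations $S_h\subset CS_{h/2}$ and $S_{2h}\supset cS_h$ hold with constants independent of $h$ and of $x_0$. These are precisely the section properties that survive the absence of uniform convexity and of $C^2$ smoothness of the boundaries.

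Given this, fix $x_0$ and a small section $S_h=S_h(x_0)$, and let $w$ solve $\det D^2 w=f(x_0)$ in $S_h$ with $w=u$ on $\p S_h$. By the Alexandrov maximum principle and the continuity of $f$, $\|u-w\|_{L^\infty(S_h)}\le\omega(h)\,h$, where $\omega$ denotes the modulus of continuity of $f$. After affine normalization through $T_h$, the rescaled $\widetilde w$ solves a Monge-Amp\`ere equation with constant right-hand side on a bounded convex $C^{1,1}$ domain of unit size; Theorem \ref{main}, applied to $\widetilde w$, provides a uniform bound $\|D^2\widetilde w\|_{L^\infty}\le K$. Stability of sub-level sets under uniform closeness of convex solutions then implies that $\{|D^2 u|>M\}\cap S_h$ has density in $S_h$ at most $\eta(h,M)$, with $\eta(h,M)\to 0$ as $h\to 0$ and $M\to\infty$.

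Feeding this density estimate into a stopping-time covering argument based on the family $\{S_h(x_0)\}$---which is a Vitali family thanks to the engulfing property---yields the power-decay estimate $|\{|D^2u|>M^k\}\cap\bom|\le C\sigma^k$, and choosing $\sigma M^p<1$ gives $D^2u\in L^p(\bom)$ for any $p\ge 1$. The main obstacle is the same one that drives the proof of Theorem \ref{main}: when $\pom$ is only $C^{1,1}$, the normalized boundary sections $T_h(S_h(x_0))$ need not converge to a fixed half-ellipsoid as $h\to 0$, so the uniform constants entering Theorem \ref{main}'s application to the model problem must depend only on the $C^{1,1}$ geometric data of $\Om$ and $\Om^*$ rather than on any limiting blow-up shape. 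Establishing this uniformity is the technical heart of the boundary analysis, and once it is in hand the density iteration goes through with standard modifications.
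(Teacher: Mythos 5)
Your strategy — density decay in the spirit of Caffarelli's interior $W^{2,p}$ proof, run up to the boundary — is genuinely different from the route the paper takes, and it contains a gap at the crucial step. After writing $w$ for the solution of $\det D^2 w=f(x_0)$ in $S_h$ with $w=u$ on $\p S_h$ and normalizing, you invoke Theorem \ref{main} to get a uniform bound on $\|D^2\widetilde w\|_{L^\infty}$. But Theorem \ref{main} is a statement about the second boundary value problem $Du(\Om)=\Om^*$ with $\pom,\pom^*\in C^{1,1}$. Your $\widetilde w$ solves a \emph{Dirichlet} problem on a normalized section $T_h(S_h(x_0))$ whose boundary is part graph of $\widetilde u$ and part image of $\pom$ under $T_h$; it is neither of the form $D\widetilde w(\widetilde\Om)=\widetilde\Om^*$, nor is its boundary uniformly $C^{1,1}$ in a way controlled solely by the data of $\Om,\Om^*$. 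So the cited theorem simply does not apply, and the uniform $C^{1,1}$ bound on the model that your density-decay step needs is not supplied by anything you have established. Obtaining such a boundary model estimate is precisely the kind of boundary regularity claim one is in the middle of proving; invoking it creates a circularity.

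The paper avoids this altogether. Its proof of Theorem \ref{main01} uses only the geometric output of Lemma \ref{newgeo} and Corollary \ref{coro5.1} — namely that sub-level sets at boundary points are comparable to balls of radius $h^{1/2\pm\epsilon}$, which gives $u\in C^{1,1-\epsilon}(\bom)$ and, in particular, that the unimodular normalization $T$ of a maximal interior section $S_{\bar h_x}(x)$ has $\|T\|,\|T^{-1}\|\lesssim \bar h_x^{-\epsilon}$. It then applies Savin's covering scheme: take a Vitali family of maximal sections, apply the \emph{interior} $W^{2,p}$ estimate (which needs only continuity of $f$) on each normalized section, undo the normalization at the cost of a factor $\bar h_x^{-2\epsilon p}$, and observe that all sections with $\bar h_x\in(d/2,d]$ are disjoint and packed inside the thin boundary strip $\{\dist(\cdot,\pom)\le Cd^{1/2-\epsilon}\}$ of measure $O(d^{1/2-\epsilon})$. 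Summing the resulting geometric series in $d$ converges once $\epsilon$ is small relative to $p$. No boundary $C^{1,1}$ estimate for a model Dirichlet problem is ever needed, and the full $C^{2,\alpha}$ theorem (Theorem \ref{main}) plays no role. If you want to salvage the density-decay route you would have to produce the boundary analogue of Pogorelov's interior estimate for a Dirichlet model problem on a normalized boundary section under merely $C^{1,1}$ geometry — a non-trivial task — whereas the paper's reduction to interior estimates plus the thin-strip volume count sidesteps it entirely.
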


The interior $W^{2,p}$ estimate for the Monge-Amp\`ere equation was proved by Caffarelli \cite{C1}.
The $W^{2,p}$ estimate at the boundary was obtained by Savin \cite {Sa1} for the Dirichlet problem, 
and by Figalli and the first author \cite{CF} for the boundary condition \eqref{bdry}. 
The proof in \cite{CF} relies on the estimates in \cite {C96} and thus required the domains to be $C^2$ smooth and uniformly convex.
In this paper, we assume that the domains are convex with $C^{1,1}$ boundary, see Remark \ref{r52} as well. 

Relaxing the uniform convexity of domains to convexity does make sense in applications. 
For example, in Wasserstein generative adversarial networks, a typical case is when the domains $\Om$ and $\Om^*$ are squares
or cubes \cite {LGA, SF}. 
Theorems \ref{main} and \ref{main01} imply the regularity of the solution on the faces of the cube.
We will prove the $C^{3,\alpha}$ regularity at the corner in a separate paper. In dimension two, the $C^{3,\alpha}$ regularity was proved in \cite{Jh} and it is optimal.

The proof of Theorems \ref{main} and \ref{main01} is based on delicate analysis on sub-level sets of the solution 
near the boundary and uses various techniques on the Monge-Amp\`ere equation \cite{Fig, Gut}, 
in particular those from Caffarelli's papers.
The uniform density in \S\ref{s2} was introduced by Caffarelli \cite{C96} but a different proof is needed for non-uniformly convex domains.
The key estimate of the paper is the following uniform obliqueness:

\begin{lemma}\label{lem obli}
Assume that $\Omega$, $\Omega^*$ are two convex domains with $C^{1,1}$ boundaries, 
and that $f$ is positive and continuous.
Let $0\in\partial\Omega$ and the image $Du(0)=0\in\partial\Omega^*$. 
Then there exists a positive constant $\mu$ such that 
	\begin{equation}\label{sobli}
		\langle \nu(0), \nu^*(Du(0)) \rangle \geq\mu>0,
	\end{equation}
where $\nu$ and $\nu^*$ are the  unit inner normals of $\Om$ and $\Om^*$, respectively.
\end{lemma}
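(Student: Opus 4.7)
The plan is to argue by contradiction. Suppose $\langle \nu(0),\nu^*(0)\rangle=0$; after a rotation we may assume $\nu(0)=e_n$ and $\nu^*(0)=e_1$, so that near the origin $\Om$ lies in the half-space $\{x_n\ge 0\}$ up to a quadratic $C|x'|^2$ upper bound on $\partial\Om$, and likewise $\Om^*\subset\{y_1\ge 0\}$ up to a quadratic $C|\bar y|^2$ upper bound on $\partial\Om^*$, where $\bar y=(y_2,\dots,y_n)$. Subtracting an affine function we may normalise so that $u(0)=0$ and $Du(0)=0$; then $u\ge 0$ in a neighbourhood of $0$.

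First, I would analyse the sections $S_h:=\{u<h\}\cap\bom$ and their gradient images $Du(S_h)\subset\bom^*$ as $h\to 0^+$. Let $T_h$ be the affine map that normalises the John ellipsoid of $S_h$ to the unit ball, and set $\tilde u_h(y):=h^{-1}u(T_h^{-1}y)$, rescaled so that $\det D^2\tilde u_h$ is bounded between two positive constants. The inverse transpose of $T_h$ normalises the image on the $\Om^*$-side, so that the target $Du(S_h)$ is also brought to bounded scale. The key geometric input here is the uniform density estimate from \S2, which in this paper is established for merely convex domains and prevents the normalised section on either side from degenerating to a lower-dimensional set.

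Next I would extract a subsequential limit $\tilde u_h\to u_\infty$, producing a convex Alexandrov solution of $\det D^2 u_\infty=c_\infty>0$ on a normalised limit domain $S_\infty$, with $Du_\infty$ pushing its mass onto a limit target $\Om^*_\infty$. Because only the $C^{1,1}$ quadratic \emph{upper} bound on the original boundaries survives the anisotropic rescaling, the relevant boundary pieces of $\partial S_\infty$ and $\partial \Om^*_\infty$ become flat hyperplanes; their unit normals are the limits of the rescaled $\nu(0)$ and $\nu^*(0)$, and hence by the contradiction hypothesis they are orthogonal. The contradiction should then come from the Brenier structure of $Du_\infty$: a thin slab in $S_\infty$ adjacent to its flat boundary piece with normal $e_n$ is transported by $Du_\infty$ into the corresponding slab of $\Om^*_\infty$, but the orthogonality forces this image to have vanishing $y_1$-extent, so it cannot carry a mass of order $c_\infty\cdot\text{vol}$ of the source slab, contradicting $\det D^2 u_\infty=c_\infty>0$.

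The main obstacle, and the essential new difficulty compared to Caffarelli \cite{C96}, is the loss of quadratic lower bounds on the boundary. Under uniform convexity the sections $S_h$ are comparable to ellipsoids of controlled aspect ratio and the blow-up limit is posed on a paraboloid-type domain with parabolic barriers available; here the blow-up limits of $\partial\Om$ and $\partial\Om^*$ can be genuinely flat half-spaces, and the aspect ratio of the affine normalisation $T_h$ must be controlled by more subtle means. Making sure that $T_h$ does not elongate along tangential directions in a way that destroys the limit geometry on either side — and that the dual rescaling keeps $\Om^*_\infty$ non-degenerate — is the delicate point, and this is precisely where the uniform density lemma of \S2, together with a careful matching of source and target normalisations, has to do the work.
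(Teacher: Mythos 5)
Your high-level framework (contradiction, affine normalisation of sub-level sets, uniform density to prevent degeneration, blow-up to a limit profile with constant right-hand side and flat, orthogonal boundary pieces) matches the scaffolding of \S4--\S5 of the paper. But the crucial step --- the mechanism that actually produces the contradiction in the limit profile --- is missing, and the ``thin slab / mass balance'' heuristic you propose does not close.

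Concretely: in the blow-up you would have, say in dimension two, $\Om_0\subset\{x_2>0\}$ locally flat with normal $e_2$, $\Om^*_0\subset\{y_1>0\}$ locally flat with normal $e_1$, and $\det D^2u_0=1$. The best generic quantitative information you have near the flat boundary is $u_0\in C^{1,\gamma}$ for some $\gamma\in(0,1)$, so $0\le \partial_1 u_0\lesssim \delta^{\gamma}$ on the slab $\{0<x_2<\delta\}$. Comparing source volume $\approx\delta$ with the volume of the image region $\{0<y_1\lesssim\delta^{\gamma}\}\cap B_M$ gives only $\delta\lesssim\delta^{\gamma}$, which is consistent for all small $\delta$, not a contradiction. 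In other words, the orthogonality by itself gives you \emph{some} smallness of $\partial_1 u_0$ near the flat boundary, but not smallness of the right order. The argument would need a decay rate that beats linear in the right sense, and you have not produced one. This is exactly the content of Lemma~\ref{lemban} (the ``balance property'' $q_1\ge\delta_0|\xi_1|$ of the sub-level set in the $e_1$-direction), which the paper uses to derive the refined decay $\underline{u}(t)\le Ct\sigma(t)$ and $\underline{u_1}(t)\le C\sigma(t)$ with $\sigma(t)=o(t)$ (Corollaries following Lemma~\ref{lemban}, and \eqref{keyh} in higher dimensions). Your proposal does not contain this lemma or any substitute for it.

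Even granting the refined decay, the paper does not finish by a mass-balance estimate; it introduces the auxiliary function $w=u_1+u-x_1u_1$ (or $w=u_1+K(u-\tfrac{n}{2}x_1u_1)$ in higher dimensions), which satisfies the homogeneous linearised equation $U^{ij}D_{ij}w=0$ since the blown-up right-hand side is constant. The maximum principle then shows that $\underline w(t)=\inf_{x''}w(t,\cdot)$ is concave (Lemma~\ref{leco}), and combined with $\underline w\ge 0$, $\underline w(t)\to 0$ and $\underline w'(t)\to 0$ as $t\to 0^+$ (from the decay estimates), this forces $\underline w\equiv 0$, the contradiction. This maximum-principle step requires $u$ to be smooth enough up to the relevant boundary piece, which in turn requires the smooth-approximation machinery of Lemmas~\ref{2dapp}, \ref{1p}, \ref{high smooth}, \ref{ndob} --- the limit domains $\Om_0$, $\Om^*_0$ need not be $C^1$, so one must first show obliqueness holds on a nearby piece of boundary and approximate accordingly. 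None of this appears in your proposal, and it cannot be dispensed with: without it the linearised-equation and interior-infimum computations are not justified. So the proposal correctly identifies the arena and the main difficulty (no uniform convexity, hence no parabolic barriers), but it has a genuine gap at the heart of the argument.
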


Lemma \ref{lem obli} will be proved in \S\ref{s4} and \S\ref{ss42}.
To prove \eqref{sobli} for non-uniformly convex domains, we need to introduce a completely different and new idea.
We also provide different proof for the boundary $C^{2,\alpha}$ estimate in \S\ref{s555}, 
for convex domains with $C^{1,1}$ boundary. 
{These new techniques may apply to other problems related to Monge-Amp\`ere type equations. 
In particular, we have recently established the $C^{2,\alpha}$ regularity of free boundaries in optimal transportation \cite{CLW-free}, 
thus resolved an open problem raised by Caffarelli and McCann in \cite{CM}. }

This paper is organised as follows. 
In \S\ref{s2}, we introduce some properties on the sub-level sets of solutions to the problem \eqref{MA1} and \eqref{bdry},
and prove the uniform density property. 
In \S\ref{s3}, we obtain the tangential $C^{1,\alpha}$ regularity for any given $\alpha\in(0,1)$.
In \S\ref{s4} and \S\ref{ss42}, we prove the uniform obliqueness in dimension two and high dimensions, respectively, 
which is the key ingredient for the proof of the global $C^{2,\alpha}$  and $W^{2,p}$ regularity.
Finally in \S\ref{s555} we complete the proof of Theorems \ref{main} and \ref{main01}.

\section{Uniform density}\label{s2}
 
Consider the optimal transport with density $f$ in $\Omega$ and  density $1$ in $\Omega^*$.
We assume that $f$ satisfies $\lambda^{-1}<f<\lambda$ for a constant $\lambda>0$ 
and $\int_\Omega f(x) dx = \int_{\Omega^*} dy$.
Let $u$ and $v$ be the potential functions in $\Omega$ and $\Omega^*$, respectively. 
Then $u$ is a solution to \eqref{MA1} and \eqref{bdry}.
We extend $u, v$ to convex functions in $\mathbb{R}^n$ as follows:
	\begin{align*}
\tilde{u}(x):=\sup\{\ell(x) :\ \ell \text{ is affine, } \ell\leq u\ \text{in}\ \Omega,\ \nabla\ell\in \Omega^*\} \quad \mbox{ for }x\in\mathbb{R}^n; \\
\tilde{v}(y):=\sup\{\ell(y) :\  \ell \text{ is affine, }\ \ell\leq v\ \text{in}\ \Omega^*,\ \nabla\ell\in \Omega\} \quad \mbox{ for }y\in\mathbb{R}^n.
	\end{align*}
For simplicity of notations, we denote the extended functions $\tilde{u}, \tilde{v}$ as $u, v$.
Let $0\in\partial\Omega$ be a boundary point. 
By subtracting a linear function, we assume that $u(0) = 0$ and $u\geq 0$. 
Correspondingly, one has $0\in\partial\Omega^*$, $v(0) = 0$ and $v\geq 0$ as well. 

We introduce two different sub-level sets of $u$ at $x_0\in\overline\Om$.
One is 
$$S_h[u](x_0) = \Big\{ x\in\Omega :  u(x) <\ell_{x_0} (x)+ h \Big\},$$ 
which may be abbreviated as $S_h[u]$ or $S_h(x_0)$ when no confusion arises,
where $\ell_{x_0}$ is a support function of $u$ at $x_0$.
The other one is the \emph{centred} sub-level set
$$S^c_h[u](x_0)=\Big\{ x\in\mathbb{R}^n : u(x)< \hat\ell (x) + h \Big\} $$ 
or simply denoted as $S^c_h[u]$ or $S^c_h(x_0)$, where the affine function $\hat \ell $ is chosen such that $\hat\ell(x_0)=u(x_0)$
and $x_0$ is the mass centre for $S^c_h[u](x_0)$.
The existence of such a linear function is proved in \cite{C92}. 
Note that $S_h(x_0)$ is contained in $\Om$, but $S^c_h(x_0)$ may contain both points in and out of $\Om$.

The extended function $u\in C^1(\R^n)$ and satisfies $\det\,D^2u=f\chi_\Om$ in $\R^n$ if $\Om, \Om^*$ are convex. 
The following lemma was established by Caffarelli \cite[Corollary 2.2]{C96}.

\begin{lemma}\label{Lemma2.2}
Assume that $\Om, \Om^*$ are convex and bounded. 
Given a centred sub-level set $S^c_h(x_0)$ with $x_0\in\overline\Om$,
let $T$ be a linear transform such that $B_1(0) \subset S^*=:T(S^c_h(x_0)) \subset B_n (0)$.
Then $\hat u(x)=: h^{-1} [u-\ell](T^{-1}(x))$ satisfies 
\beq\label{good2}
B_r(0) \subset \nabla \hat u(\frac 12 S^*)\subset \hat u( S^*)\subset B_{r^{-1}} (0) , 
\eeq
where $\ell$ is the linear function such that $u=\ell$ on  $\p S^c_h(x_0)$.
Scaling back, there is an ellipsoid $E$ centred at $\nabla \ell$
such that
\beq\label{good1}
 r E\subset \nabla u(S^c_h(x_0))\subset  r^{-1} E,
\eeq
where 
$\alpha E$ denotes the $\alpha$-dilation of $E$ with respect to its centre, 
and the constant $r>0$ depends only on $n, \lambda$, $\Om, \Om^*$, but is independent of $h$ and $u$.
\end{lemma}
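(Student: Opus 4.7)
The plan is to normalize the centred sub-level set via John's lemma, observe that the renormalized function is a Monge-Amp\`ere solution on a convex domain of bounded eccentricity with right-hand side pinched between two positive constants, and then read off both gradient inclusions from Aleksandrov-type maximum principles. Scaling back will give the ellipsoid statement automatically.

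First, apply John's lemma to produce an affine transformation $T$ (after a translation sending $T(x_0)=0$) with $B_1(0)\subset S^*:=T(S^c_h(x_0))\subset B_n(0)$, and set $\tilde u(x)=h^{-1}[u-\hat\ell](T^{-1}x)$. Then $\tilde u$ is convex, $\tilde u=0$ on $\p S^*$, and a direct computation gives $\det D^2\tilde u = h^{-n}(\det T)^{-2}\,f\circ T^{-1}$. The crucial normalization is that $(\det T)^2 h^n\asymp 1$: this follows from the volume estimate $|S^c_h(x_0)|\asymp h^{n/2}$ for centred sections of a Monge-Amp\`ere solution with right-hand side bounded between positive constants, combined with $|S^c_h(x_0)|\asymp |\det T|^{-1}$ from John's normalization. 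Hence $\det D^2\tilde u$ is pinched between positive constants depending only on $n$ and $f$, and Aleksandrov's maximum principle yields $-C\le\tilde u\le 0$ on $S^*$ for some universal $C$.

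The upper inclusion $\nabla\tilde u(S^*)\subset B_{r^{-1}}$ then follows from the fact that a convex function with bounded oscillation on a convex set of bounded eccentricity is uniformly Lipschitz. For the lower inclusion $B_r\subset\nabla\tilde u(\tfrac12 S^*)$, fix $y$ with $|y|<r$ small and consider $w_y(x):=\tilde u(x)-y\cdot x$. On $\p S^*$ one has $w_y\ge -n|y|$, whereas at the interior minimum point $q^*$ of $\tilde u$ one has $w_y(q^*)\le -c_0+n|y|<-n|y|$ (by the Aleksandrov--Bakelman--Pucci estimate from below, $\min_{S^*}\tilde u\le -c_0$ for some universal $c_0>0$). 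Hence the minimum of $w_y$ over $\ol{S^*}$ is attained at an interior point $x_y$ with $y\in\partial\tilde u(x_y)$; that $x_y\in\tfrac12 S^*$ for $|y|<r$ sufficiently small follows from Caffarelli's engulfing property for normalized Monge-Amp\`ere sections, which ensures that the section $\{w_y<\min w_y+\delta\}$ sits strictly inside $S^*$ once $|y|$ is small.

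The main obstacle is the localization $x_y\in\tfrac12 S^*$: this requires a quantitative engulfing argument for sections of a normalized Monge-Amp\`ere solution, relying essentially on the centring condition that $x_0$ is the mass centre of $S^c_h(x_0)$, so that the rescaled $\tilde u$ is a genuinely ``universal'' Monge-Amp\`ere object to which Caffarelli's interior regularity theory applies. To conclude, the chain rule gives $\nabla u(y)=\nabla\hat\ell+h\,T^{t}(\nabla\tilde u)(Ty)$, so $\nabla u(S^c_h(x_0))=\nabla\hat\ell+h\,T^{t}\nabla\tilde u(S^*)$. Setting $E:=h\,T^{t}B_1(0)$, an ellipsoid centred at $\nabla\hat\ell=\nabla\ell$, the inclusions $B_r\subset\nabla\tilde u(\tfrac12 S^*)\subset\nabla\tilde u(S^*)\subset B_{r^{-1}}$ translate directly into $rE\subset\nabla u(S^c_h(x_0))\subset r^{-1}E$, as claimed.
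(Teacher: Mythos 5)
The paper itself does not reprove this lemma; it cites it as \cite[Corollary~2.2]{C96} and only indicates the two mechanisms: the first inclusion in \eqref{good2} follows from the strict convexity estimate \eqref{stricon}, and the last from the doubling condition of the Monge--Amp\`ere measure $\mu_{\tilde u}$. Measured against that, your proposal has two genuine gaps.

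First, the assertion that $\det D^2\tilde u$ is ``pinched between positive constants'' on $S^*$ is false, and it matters. The point $x_0$ lies on $\partial\Omega$, so $S^c_h(x_0)$ typically extends outside $\Omega$; the extended $u$ is affine-generated there, so $\mu_{\tilde u}\equiv 0$ on $T(S^c_h(x_0)\setminus\Omega)$, which can be a definite portion of $S^*$. The lower bound $\det D^2\tilde u\gtrsim 1$ you invoke only holds on $T(S^c_h\cap\Omega)$, and the step from there to ``$\mu_{\tilde u}(S^*)\gtrsim 1$'' (and hence to the Aleksandrov bound $\min\tilde u\le -c_0$) is exactly the uniform density / doubling condition of Lemma~\ref{luni}, which you never invoke; indeed the normalization $(\det T)^2h^n\asymp 1$ you use, i.e.\ $|S^c_h|\asymp h^{n/2}$, already presupposes that density estimate. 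This cannot be waved away as a harmless ``pinching''; it is the heart of the boundary case.

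Second, the claim that the upper inclusion $\nabla\tilde u(S^*)\subset B_{r^{-1}}$ follows ``from the fact that a convex function with bounded oscillation on a convex set of bounded eccentricity is uniformly Lipschitz'' is simply not a fact: $\tilde u(x)=-\sqrt{1-|x|^2}$ on $B_1$ has oscillation $1$ and unbounded gradient at $\partial B_1$. The bounded oscillation of $\tilde u$ on $S^*$ alone (you only establish $-C\le\tilde u\le 0$ there, not on a dilate) gives no gradient bound up to $\partial S^*$. The correct input, as the paper states, is again the doubling of $\mu_{\tilde u}$, which rules out the boundary blow-up in the above example. Without it, both the gradient upper bound and the localization $x_y\in\tfrac12 S^*$ in your lower-inclusion argument are unjustified. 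The scaling-back step at the end is fine; the ellipsoid $E=hT^tB_1$ centred at $\nabla\ell$ is exactly what is wanted once \eqref{good2} is in hand. But \eqref{good2} itself needs the doubling/density machinery of \cite{C92,C96}, not the shortcut you propose.
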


Lemma \ref{Lemma2.2} implies that $\nabla \ell$  is a true interior point of $\nabla u(S^c_h(x_0))$, namely
it has a positive distance from the boundary after normalisation.
The first inclusion in \eqref{good2} also follows from the strict convexity of $u$ \cite[Corollary 2.3]{C96}, namely
\beq\label{stricon}
u(x)\ge {u(x_0)+} \nabla u(x_0) (x-x_0)+ c_0 h \ \ \ \forall\ x\in \p S^c_h(x_0)\cap \Om,
\eeq
where $c_0$ is a constant depending on $n, \lambda$, $\Om, \Om^*$ but independent of $h, u$. 
The last inclusion in \eqref{good2} is due to the doubling condition of $\mu_{\hat u}$,
where $\mu_{\hat u}$ is the Monge-Amp\`ere measure of $\hat u$.

{Let $x_0=0\in\pom$, we shall}
 describe a geometric implication of \eqref{good1}. 
Denote $w=u-{\hat\ell}$ and assume that $w$ attains its minimum at $p_0$.
Let $\phi$ be a convex function whose graph is a convex cone with vertex at $(p_0, w(p_0))$ and satisfies
$\phi=w$ on $\p S^c_h(0)$. Then we have
\beq\label{incl0}
\nabla\phi(x)\cdot (x-p_0) \le \nabla w(x)\cdot (x-p_0)\le  c \nabla\phi(x)\cdot (x-p_0) \ \ \ \forall\ x\in \p S^c_h(0).
\eeq
The first inequality is due to the convexity of $w$ and the second one is due to \eqref{good1}.

Let $p\in \p S^c_h(0)$ such that
$p\cdot e_1=\sup\{ x\cdot e_1: \ x\in  S^c_h(0)\}$, 
where $e_k$ denotes the unit vector on the $x_k$-axis, for $k=1, 2, \cdots, n$.
Then $\nabla w(p)=|\nabla w(p)| e_1$ and \eqref{incl0}  implies that 
\beq\label{gesti}
|\nabla w(p)| \approx \frac{h}{(p-p_0)\cdot e_1}.
\eeq
By the convexity, one sees that \eqref{gesti} holds if $p_0$ is replaced by any point in $\frac 12 S_h^c(0)$.
In particular it holds when $p_0=0$.  \eqref{gesti} will be used in the proof of Lemma \ref{luni} below.

{In this paper we use the notation $a\gtrsim b$ (resp. $a\lesssim b$) 
if there exists a constant $C>0$ depending only on $n,f,\Om,\Om^*$ such that $a \geq C b$ (resp. $a\leq C b$), 
and $a\approx b$ means that $C^{-1} b\le a\le C b$.}
For a convex set $A$, we also use the notation $A\sim E$, where $E$ is an ellipsoid, 
if  $C^{-1} E\subset A\subset  C E$.
For two convex sets $A_1$ and $A_2$, we denote $A_1\sim A_2$ if there is an ellipsoid $E$ such that 
$A_1\sim E$ and $A_2\sim E$. 
If $A\sim B$ for a ball $B$, we also say that $A$ has a good shape.
 
The following lemma shows an equivalence relation between these two sub-level sets.

\begin{lemma}\label{rela1}
Under the hypotheses of Lemma \ref{Lemma2.2}, for $h>0$ small we have
	\begin{equation}\label{relate1}
		S_{b^{-1}h}^c(0)\cap\Omega \subset S_h(0) \subset S_{bh}^c(0)\cap\Omega,
	\end{equation}
where the constant $b\ge 1$ depends only on $n, \lambda$, $\Om, \Om^*$, but is independent of $h$ and $u$.
\end{lemma}

\begin{proof}
To prove the first inclusion, it suffices to prove that  for any $x\in S^c_h(0)$,
we have $u(x)\le Ch$ for a constant $C>0$ depending only on $n$.
Indeed, assume that $\sup \{ u(x) :  x\in S^c_h(0)\}$ is attained at $p\in \p S^c_h(0)$.
Let $q=-\beta p$, where $\beta>0$, be a point on $ \p S^c_h(0)$ such that $p,q,0$ stay on a line segment.
Since  $0$ is the centre of $S^c_h(0)$, we have $c_n^{-1}\le \beta \le c_n$ for a constant $c_n$ depending only on $n$. 
Noting that {$u(0)=0$ and} $u=\ell$ on $\partial S_{h}^c(0)$ for a linear function $\ell$, we have
$$\ell(q)+\beta \ell(p)=(1+\beta)\ell(0) =(1+\beta)h.$$
If $\ell(p)=u(p)>Ch$ for a large $C$, we have $u(q)=\ell(q)<0$, which is a contradiction.
	
The second inclusion follows readily from the strict convexity, \eqref{stricon}.
\end{proof}


The following uniform density was introduced and proved by Caffarelli in \cite{C96}, assuming 
that $\Om$ is polynomially convex. 
Here we relax the polynomial convexity to the convexity of domains with $C^{1,1}$ boundary.

\begin{lemma}\label{luni}
Assume that $\Omega$, $\Omega^*$ are bounded convex domains with $C^{1,1}$ boundary, and that $0\in\partial\Omega$.
Then
	\begin{equation}\label{unid}
		\frac{\Vol\left(\Omega\cap S^c_h(0)\right)}{\Vol\left(S^c_h(0)\right)} \geq \delta_0 >0
	\end{equation}
for some positive constant $\delta_0$ depending on $n, \lambda$, $\Om, \Om^*$, but independent of $u$ and $h$.
\end{lemma}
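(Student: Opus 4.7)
The plan is to combine Lemma~\ref{Lemma2.2} with the Monge-Amp\`ere equation and the extension $\tilde u$ to derive a mass-balance identity, and then close the argument using the $C^{1,1}$ boundary regularity of both $\Om$ and $\Om^*$.

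First, I would establish the mass-balance identity
\[
|\Om\cap S^c_h(0)|\cdot|S^c_h(0)|\gtrsim h^n.
\]
The key observation is that for $x\in\R^n\setminus\bom$, the extension $\tilde u$ (being the Legendre dual of $v$ extended by $+\infty$ off $\ol{\Om^*}$) has every subgradient $p\in\partial\tilde u(x)$ lying on $\p\Om^*$. Since $\p\Om^*$ has Lebesgue measure zero (being $C^{1,1}$), this gives $|\nabla\tilde u(S^c_h(0)\setminus\bom)|=0$. Combining with Lemma~\ref{Lemma2.2}, which provides $|\nabla\tilde u(S^c_h(0))|\approx h^n/|S^c_h(0)|$, and the Monge-Amp\`ere equation $\det D^2 u=f\in[c_0,C_0]$ in $\Om$:
\[
\frac{h^n}{|S^c_h(0)|}\approx|\nabla\tilde u(S^c_h(0))|\le|\nabla u(S^c_h(0)\cap\Om)|=\int_{S^c_h(0)\cap\Om}f\,dx\lesssim|\Om\cap S^c_h(0)|,
\]
yielding the identity. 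Together with the trivial bound $|\Om\cap S^c_h(0)|\le|S^c_h(0)|$, this gives the automatic lower bound $|S^c_h(0)|\gtrsim h^{n/2}$ and reduces \eqref{unid} to the complementary upper bound $|S^c_h(0)|\le C h^{n/2}$.

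Second, for the upper bound I would use the $C^{1,1}$ structure of $\p\Om$. Choose coordinates with inner normal $e_n$, so that $\Om\supset\{x_n>K|x'|^2\}$ locally, and let $a,b$ denote the tangential and normal semi-axes of the ellipsoid $E_h\sim S^c_h(0)$. A direct volume computation gives
\[
\frac{|\Om\cap E_h|}{|E_h|}\ge c>0\quad\Longleftrightarrow\quad b\gtrsim Ka^2.
\]
The geometric task is thus to establish the quadratic relation $b\gtrsim Ka^2$, which for merely $C^{1,1}$-convex (not uniformly convex) $\Om$ cannot be read off from the shape of $\p\Om$ alone. I would derive it by passing to the dual: by Lemma~\ref{Lemma2.2}, $\nabla u(S^c_h(0))$ is comparable to an ellipsoid $E$ with semi-axes $(h/a,h/b)$ touching $\p\Om^*$ at $0=\nabla u(0)$; applying the first-step mass-balance to the Legendre transform $v$ at $0\in\p\Om^*$ and invoking the $C^{1,1}$ inclusion $\Om^*\supset\{y_n>K^*|y'|^2\}$ yields the dual inequality, and iterating the primal and dual constraints forces $b\gtrsim Ka^2$, hence $|S^c_h(0)|\approx a^{n-1}b\le C h^{n/2}$.

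The main obstacle is precisely this second step: extracting the quadratic relation $b\gtrsim Ka^2$ without uniform convexity. In Caffarelli's proof for polynomially convex domains \cite{C96}, explicit barrier constructions use the polynomial lower bound on the boundary curvature. In the present setting one can assume no lower bound on the curvature of either $\p\Om$ or $\p\Om^*$, and must instead exploit the primal-dual structure of Lemma~\ref{Lemma2.2} together with both $C^{1,1}$ bounds simultaneously to close the estimate.
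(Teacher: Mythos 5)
Your Step~1 (the ``mass-balance'' identity $|\Om\cap S^c_h(0)|\cdot|S^c_h(0)|\approx h^n$) is a nice observation and is essentially correct: the extension $\tilde u$ has $\partial\tilde u(\R^n\setminus\bar\Om)\subset\p\Om^*$, a Lebesgue-null set, so Lemma~\ref{Lemma2.2} gives $|\nabla\tilde u(S^c_h(0))|\approx h^n/|S^c_h(0)|$ on one hand and $|\nabla\tilde u(S^c_h(0))|=|\nabla u(S^c_h(0)\cap\Om)|\approx|\Om\cap S^c_h(0)|$ on the other. (Your inequality $|\nabla\tilde u(S^c_h(0))|\le|\nabla u(S^c_h(0)\cap\Om)|$ should be an equality, but this is a typo, not an error.) This cleanly reduces \eqref{unid} to the upper bound $|S^c_h(0)|\le Ch^{n/2}$, which is a different and arguably more transparent reduction than the one in the paper. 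The paper instead projects $S^c_h$ and $S^c_h\cap\Om$ onto the tangent hyperplane $\{x_n=0\}$ and reduces to a one-dimensional width comparison $\lambda_e/r_e\ge C$ via an induction on dimension.

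The genuine gap is in Step~2, and it is not a small one. Your plan to ``apply the first-step mass-balance to $v$ and iterate the primal and dual constraints'' tacitly relies on the duality $|S^c_h[v](0)|\approx h^n/|S^c_h[u](0)|$, i.e.\ on Corollary~\ref{cdual}. But Corollary~\ref{cdual} is derived from \eqref{gV1}, which is itself a consequence of Lemma~\ref{luni}, so the duality in the form you need is not available yet: the ellipsoid $\nabla\tilde u(S^c_h[u])$ furnished by Lemma~\ref{Lemma2.2} is \emph{not} a priori comparable to the centred section $S^c_h[v](0)$ (the former has $0$ on its boundary and is centred at $\nabla\ell$, while the latter has $0$ as its mass centre). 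Without that duality the ``iteration'' does not close, and you have not exhibited a concrete mechanism by which the two $C^{1,1}$ bounds force $b\gtrsim Ka^2$. Indeed, if the argument could be completed as sketched it would use no quantitative regularity of $u$ beyond Lemma~\ref{Lemma2.2}, whereas the paper's proof crucially feeds in the global $C^{1,\delta}$ estimate of~\cite{C92} to get the lower bound $r_{e_1}\gtrsim h^{1/(2(1+\delta))}$ (property~(g)), and then plays this against the $C^{1,1}$ bound on $\p\Om^*$ and the gradient estimate~\eqref{gesti} to reach a contradiction. You should either supply the missing quantitative input (most naturally the $C^{1,\delta}$ estimate) and show precisely how the two $C^{1,1}$ inclusions interact with the ellipsoid shape, or switch to the paper's projection-and-contradiction scheme; as written, the second step is a plan rather than a proof.

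One further minor issue: you write the ellipsoid volume as $a^{n-1}b$, implicitly assuming all tangential semi-axes are equal. In general the minimum ellipsoid of $S^c_h(0)$ has $n-1$ distinct tangential semi-axes (and need not be aligned with $e_n$), and the quadratic relation you want should be phrased with respect to the \emph{largest} tangential semi-axis; this is precisely why the paper reduces to one direction at a time via the infimum over $e\in\p B_1\cap\{x_n=0\}$ of $\lambda_e/r_e$.
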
 
 
\begin{proof}
Assume that $\{x_n=0\}$ is the tangential plane of $\pom$ at $0$ and $\Om\subset\{x_n>0\}$. 
Let $S'_h$ and $S'_{\Om, h}$ be respectively the projections of  $S^c_h$  and $S^c_h\cap \Om$ on $\{x_n=0\}$.
To prove \eqref{unid}, it suffices to prove 
\beq\label{unid1}
|S'_{\Om,h}|\ge C |S'_h| .
\eeq
{In fact, let $\tilde p=r_ne_n\in\p S_h^c$, see Figure 1 below. Then we have $\Vol(S_h^c)\leq C_1r_n|S'_h|$ and $\Vol(\Om\cap S_h^c)\geq C_2r_n|S'_{\Om,h}|$, where the constants $C_1, C_2$ only depend on the dimension $n$.}

For any unit vector $e\in \{x_n=0\}$, denote
\begin{align*}
\lambda_e & =\sup\{ (x-y)\cdot e : \ x, y\in S'_{\Om,h}\},\\
 r_e & = \sup\{t : te\in S'_h\}. 
 \end{align*} 
Note that $\lambda_e$ is the width of projection of $S'_{\Om,h}$ in the direction $e$, 
and $r_e$ is the distance from $0$ to the boundary $\p S'_h$ in  the direction $e$.
We \textit{claim} that if there is a positive constant $C$ such that
	\begin{equation}\label{newcon}
		\frac{\lambda_e}{r_e}\ge C\quad \forall\ e\in {\p B_1(0)}\cap\{x_n=0\},
	\end{equation}		
then \eqref{unid1} holds.

To prove this claim, we use induction on dimensions. 
Let $E$ be the minimum ellipsoid of $S'_h$ with principal radii $r_1\leq\cdots\leq r_{n-1}$ 
and principal axes $e_1,\cdots, e_{n-1}.$

{Let $p\in \partial S'_{\Om,h} $ be a point satisfying $|p\cdot e_{n-1}|=\sup\{{|x\cdot e_{n-1}|}: x\in S'_{\Om,h}\},$ and $e_p:=\frac{p}{|p|}$.
By \eqref{newcon}, $|p\cdot e_{n-1}|\geq Cr_{n-1}$.}
Let $S''_{\Om,h}$ be the projection of $S'_{\Om,h}$ on $\{x: x\cdot e_p=0\},$ and $S''_h:=S'_h\cap \{x:x\cdot e_p=0\}.$  
Denoting
\begin{align*}
\lambda'_e & =\sup\{ (x-y)\cdot e : \ x, y\in S''_{\Om,h}\},\\
 r'_e & = \sup\{t : te\in S''_h\}
\end{align*}
for any unit vector $e\in\text{span}(e_1, \cdots, e_{n-1})$ and $e\perp e_p$, we still have $\frac{\lambda'_e}{r'_e}\ge C$. 
Observe that 
$$|S'_{\Om,h}|\approx   |S''_{\Om,h}| |p|  \geq C|S''_{\Om,h}|r_{n-1}\ \ \  \text{and}\ \ \ 
|S'_h|\leq C|S''_h|r_{n-1}.$$
Therefore, to prove \eqref{unid1} it suffices to prove
$$|S''_{\Om,h}|\geq C|S''_h|.$$  
By induction we can reduce it to one-dimensional case, in which the claim is trivial.

Let $e_1$ be the direction in which $\inf\big\{\frac{\lambda_e}{r_e}:\  e\in \{x_n=0\} \big\}$ is attained. 
By the above claim, it suffices to prove that $\frac{\lambda_{e_1}}{r_{e_1}}\ge C$.
Let $\ell$ be the linear function such that  $u=\ell$ on $\p S^c_h(0)$.
By subtracting a linear function we assume that $\ell=0$ (namely we write $u-\ell$ as $u$).
Assume $u$ attains its minimum at $p_0$.
Let $p_l$ and $p_r$ be the left and right ends of $S^c_h$, namely
\begin{align*}
 p_r\cdot e_1=\sup\{x\cdot e_1: \ x\in S^c_h(0)\},\\
 p_l\cdot e_1=\inf\{x\cdot e_1: \ x\in S^c_h(0)\}.
\end{align*}
Denote $q_l=Du(p_l)$ and $q_r=Du(p_r)$. 
By definition, $r_{e_1}e_1\in\partial S'_h,$ and there 
exists $y\in \partial S^c_h(0)$ such that the projection of $y$ on $\{x_n=0\}$ is $r_{e_1}e_1.$ 
Since $S^c_h(0)$ is balanced with respect to $0$, we may
assume $y_n=y\cdot e_n\geq 0.$ 
Observe that $p_r\cdot e_1\geq y\cdot e_1= r_{e_1}.$

\begin{figure}[h]
 \centering
 \includegraphics[scale=0.36]{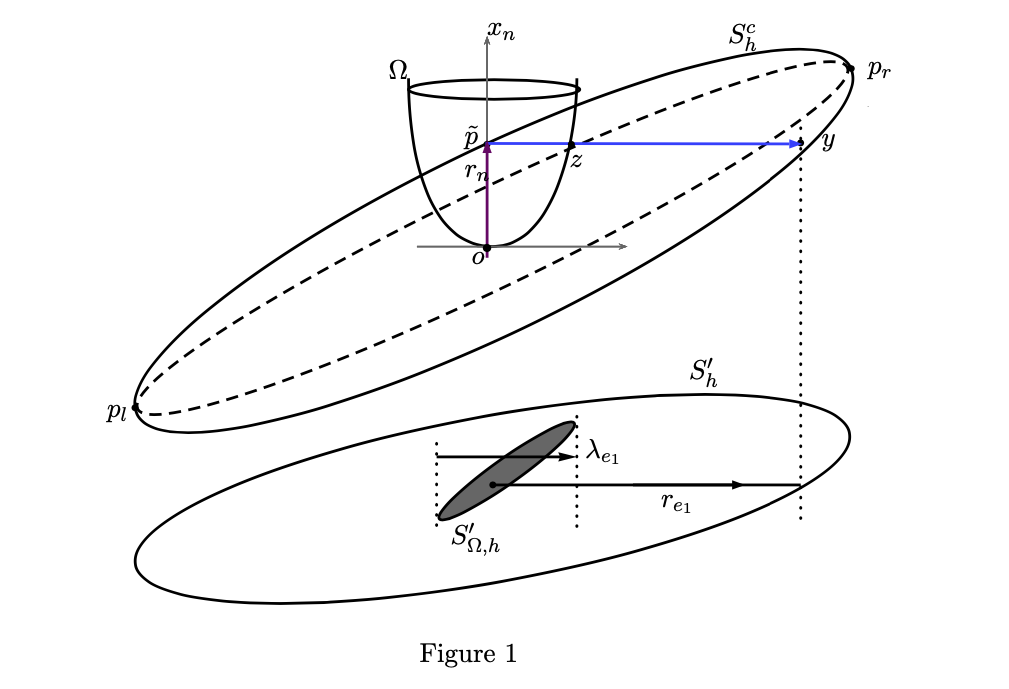}
\end{figure}

 If the ratio $\lambda_{e_1} / r_{e_1}$ is sufficiently small, we have 
\begin{itemize}
\item [a)] $\delta y\not\in\Omega$ for some small $\delta>0.$

\item [b)]   $ \delta p_r,  \delta p_l\not\in\Omega$.

\item [c)]  $q_l, q_r\in\pom^*$. 

\item [d)]   The segment $\overline{q_lq_r}$ is parallel to $e_1$. 

\item [e)] The point $q_0 := Du(p_0)$ lies on the segment $\overline{q_lq_r}$, and
by \eqref{good2},  $|q_0-q_l|\approx |q_0-q_r|$.

\item [f)]  By the convexity of $S^c_h$, there is a unique number $r_n>0$ such that $\widetilde p=:r_n e_n\in \p S^c_h$. 
The line segment $\overline{\widetilde p y}$ intersects with $\pom$ at a point $z=(z_1,\cdots, z_n)$.
Since both points $\widetilde p, y \in \p S^c_h$, we have $z\in S^c_h$.
By definition we have $ \lambda_{e_1}\ge |z'|$, where $z'=(z_1, \cdots, z_{n-1})$.
Hence by property a)  above and since $y_n\ge 0$, we infer that  
$$z_n\ge \frac {r_{e_1}-|z'|}{r_{e_1}} r_n\ge \frac 12 r_n. $$
{Actually, in the triangle vertex at $(0, \tilde p, y)$, since $z\in\overline{\tilde p y}$, one has
\begin{equation*}
\begin{split}
	z_n &\ge \frac {r_{e_1}-|z'|}{r_{e_1}} r_n \\
		&\ge  \frac {r_{e_1}-\lambda_{e_1}}{r_{e_1}} r_n \ge \frac 12 r_n,
\end{split}
\end{equation*}
as the ratio $\lambda_{e_1}/r_{e_1}$ is sufficiently small.
}

\item [g)] By the $C^{1,\delta}$ regularity of $u$, we have 
$r_n\ge C h^{\frac{1}{1+\delta}}$. 
By the $C^{1,1}$ regularity of $\pom$ and property f) above, we then have
\beqs
  r_{e_1}>|z'| \ge C z_n^{1/2}\ge  C h^{\frac{1}{2(1+\delta)}} . 
\eeqs
\end{itemize}

Let $q^*\in \pom^*$ be the point such that
$$|q_0-q^*|=\inf \{|q-q_0| : \ q\in \pom^*\} .$$
Assume that $q^*=q_0+ \sigma e^* $ for a unit vector  $e^*$.
Note that  $|p_l-p_r|$ is small if $h$ is small. 
Hence by the $C^{1,1}$ smoothness of $\pom^*$ and property e) above,
we see that 
\beq
\sigma=|q^*-q_0|\le C |q_l-q_r|^2 \ \ \text{as}\ h\to 0.
\eeq
By \eqref{gesti} (note that \eqref{gesti} holds when $p_0$ is replaced by any point in $\frac 12 S^c_h(0)$),
\beqs
|q_r-q_l|=|Du(p_l)-Du(p_r)|\le C\frac{h}{p_r\cdot e_1}\le C \frac {h}{r_{e_1}} \le C h^{\frac{1+2\delta}{2+2\delta}} .
\eeqs
Hence 
\beqs
\sigma=|q_0-q^*|\le C |q_r-q_l|^2\le C h^{\frac{1+2\delta}{1+\delta}}= C h^{1+\frac{\delta}{1+\delta}}.
\eeqs
But by \eqref{gesti} again,  we also have
\beqs
\sigma \approx \frac {h}{d_{e^*}},\quad \mbox{ where }\ \ d_{e^*} := \sup\{x\cdot e^* : x\in S_h^c(0)\}. 
\eeqs
Hence
\beqs
d_{e^*}\approx \frac h\sigma \ge h^{-\frac{\delta}{1+\delta}}\to\infty \ \ \text{as}\ h\to 0.
\eeqs
This is apparently a contradiction, because $d_{e^*}\to 0$ as $h\to 0$, by the strict convexity of the solution.
\end{proof}

\begin{remark}\label{2duni}
As mentioned before Lemma \ref{luni}, the uniform density was proved by Caffarelli \cite[Remark 2, Theorem 3.1]{C96}, assuming that $\Omega$ is polynomially convex. In dimension two, a bounded convex domain is polynomially convex.
Hence when $n=2$,
the uniform density holds for any bounded convex domains. 
No regularity on the boundaries $\pom$ and $\pom^*$ is needed.
\end{remark}

From the uniform density property, we then have \cite{C96},
	\begin{equation}\label{gV1}
		\Vol\left(S_h^c(0)\right) \approx \Vol\left(S_h^c(0) \cap \Omega \right) \approx h^{\frac{n}{2}}
	\end{equation}
for any $h>0$ small. By Lemma \ref{rela1}, we also have 
	\begin{equation}\label{gV2}
		\Vol\left(S_h(0)\right) \approx h^{\frac{n}{2}}.
	\end{equation}
The following duality result can be found in \cite [Corollary3.2]{C96}
\begin{corollary}[Duality]\label{cdual}
Let $T$ be a unimodular linear transform such that 
$B_{h^{1/2}}\subset T\{ S_h^c[u](0) \}\subset B_{nh^{1/2}}$.
Then we have
\beq \label{BTB}
B_{Ch^{1/2}}\subset T^*\{ S_h^c[v](0) \}\subset B_{C^{-1}h^{1/2}},
\eeq
where $T^*=(T')^{-1}$ is the inverse of the transpose of $T$.
\end{corollary}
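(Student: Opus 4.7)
The plan is to combine the Legendre duality between $u$ and $v$ with Lemma \ref{Lemma2.2} and the volume estimates \eqref{gV1}--\eqref{gV2}. The core observation is that normalising $u$ via the unimodular transform $T$ should, under the Legendre transform, induce the normalisation of $v$ by the dual transform $T^* = (T')^{-1}$.

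I would first introduce the normalised function $\wtt u(x) = u(T^{-1}x) - p_u\cdot T^{-1}x$ on $\wtt S := T(S_h^c[u](0))$, where $p_u = \nabla\hat\ell_u$ (noting $\hat\ell_u(x) = p_u\cdot x$ since $u(0)=0$). Because $T$ is unimodular, $\wtt u$ is a convex Monge--Amp\`ere solution with bounded right-hand side on the normalised domain $B_{h^{1/2}} \subset \wtt S \subset B_{nh^{1/2}}$, and Lemma \ref{Lemma2.2} (applied to $\wtt u$ with identity normaliser) gives $\nabla\wtt u(\wtt S) \sim B_{h^{1/2}}$. A direct substitution $x = Tz$ in the definition of the Legendre transform then yields the key identity
\[
\wtt v(y) \;:=\; (\wtt u)^*(y) \;=\; v(T' y + p_u),
\]
so $v$ and $\wtt v$ are related by the affine change of variables $w = T'y + p_u$, under which $w = 0$ corresponds to $y^* := -T^* p_u$, the unique minimum of $\wtt v$ with $\wtt v(y^*) = v(0) = 0$. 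Since affine maps preserve mass centres, the centred sub-level set $S_h^c[v](0)$ in $w$-coordinates transforms to the centred sub-level set $S_h^c[\wtt v](y^*)$ in $y$-coordinates, giving
\[
T^*\bigl(S_h^c[v](0)\bigr) \;=\; S_h^c[\wtt v](y^*) - y^*.
\]

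It therefore suffices to prove $S_h^c[\wtt v](y^*) - y^* \sim B_{h^{1/2}}$. This comparability follows from Legendre duality $\nabla\wtt v = (\nabla\wtt u)^{-1}$ combined with the already-established $\nabla\wtt u(\wtt S) \sim B_{h^{1/2}}$: for $y \in \nabla\wtt u(\wtt S)$, setting $x = \nabla\wtt v(y) \in \wtt S$, one has $\wtt v(y) = x\cdot y - \wtt u(x)$ with $|x|, |y| \lesssim h^{1/2}$ and $|\wtt u| \lesssim h$ on $\wtt S$ (a standard normalised-solution estimate), so $\wtt v = O(h)$ on $\nabla\wtt u(\wtt S)$; the reverse inclusion $\{\wtt v < ch\} \subset \nabla\wtt u(\wtt S)$ comes from the strict convexity of $\wtt v$, whose Monge--Amp\`ere measure $1/\det D^2 \wtt u$ is also bounded above and below. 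The main obstacle I anticipate is the precise matching of mass centres between the gradient image $\nabla\wtt u(\wtt S)$ and the centred sub-level set $S_h^c[\wtt v](y^*)$, which is exactly the content of Caffarelli's symmetric argument in \cite[Corollary 3.2]{C96}. Once this is granted, pulling back through $y = T^*(w - p_u)$ yields the desired two-sided inclusion \eqref{BTB}.
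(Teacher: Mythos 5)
Your proposal is correct and follows essentially the same route as the paper: both proofs exploit the invariance of the Legendre duality pairing under the joint change of variables $(x,y)\mapsto(Tx,T^*y)$ to reduce to the case $T=\mathrm{id}$, and then invoke the gradient-image estimate \eqref{good2} together with the volume estimate \eqref{gV1} (Caffarelli's Corollary~3.2) to conclude. Your version is considerably more explicit — you carry out the Legendre-transform computation showing $\wtt v(y)=v(T'y+p_u)$ and track the affine shift $p_u$ and the induced centre $y^*=-T^*p_u$ — whereas the paper compresses this to the one-line observation that $x\cdot y$ is invariant under $T,T^*$; but the underlying argument, including the deferred mass-centre matching, is the same.
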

 
\begin{proof} 
As the inner product $x\cdot y$ is invariant under the transforms $T$ and $T^*$,
to prove \eqref{BTB} one may assume directly that $T$ is the identity mapping. 
Then \eqref{BTB} follows from \eqref{gV1} and \eqref{good2}. 
\end{proof}

From Corollary \ref{cdual}, we also have the following corollaries, which will be used in \S\ref{sn51}.

\begin{corollary} \label{cdual-1}
For any $h>0$ small, we have 
\beq\label{inn}
|x\cdot y|\le C h \quad  \ \ \ \forall\    x\in S_h^c[u](0),\   y\in S_h^c[v](0).
\eeq
Moreover, for any $x\in \p S_h^c[u](0)$, there exists $y\in \p S_h^c[v](0)$ such that 
\beq\label{inn1}
x\cdot y\ge C^{-1} h,
\eeq
where $C$ is a constant independent of $u$ and $h$.
\end{corollary}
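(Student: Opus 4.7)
The plan is to deduce both inequalities directly from Corollary \ref{cdual}. Let $T$ be the unimodular transform provided by that corollary, so that
\beqs
B_{h^{1/2}} \subset T\{S_h^c[u](0)\} \subset B_{nh^{1/2}}, \qquad
B_{Ch^{1/2}} \subset T^*\{S_h^c[v](0)\} \subset B_{C^{-1}h^{1/2}} .
\eeqs
The key algebraic identity, which follows from the very definition $T^*=(T')^{-1}$, is that for all $x,y \in \RR^n$,
\beqs
x \cdot y = (Tx) \cdot (T^*y),
\eeqs
since $(Tx)^T(T^*y) = x^T T^T (T^T)^{-1} y = x^T y$.

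For the first estimate \eqref{inn}, whenever $x \in S_h^c[u](0)$ and $y \in S_h^c[v](0)$, we have $|Tx| \le n h^{1/2}$ and $|T^*y| \le C^{-1} h^{1/2}$. The identity above combined with the Cauchy--Schwarz inequality yields $|x\cdot y| = |(Tx)\cdot(T^*y)| \le (n/C)\,h$, giving \eqref{inn} up to relabelling the constant.

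For the second estimate \eqref{inn1}, fix $x \in \p S_h^c[u](0)$ and consider the unit direction $w = Tx/|Tx|$. Since $T^*\{S_h^c[v](0)\}$ is a bounded convex set containing the origin, there is a unique $t>0$ with $z := tw \in \p T^*\{S_h^c[v](0)\}$; set $y := (T^*)^{-1} z \in \p S_h^c[v](0)$. The inclusion $B_{h^{1/2}} \subset T\{S_h^c[u](0)\}$ forces $|Tx| \ge h^{1/2}$ (because $Tx$ lies on the boundary while $B_{h^{1/2}}$ is an interior ball), and analogously $B_{Ch^{1/2}} \subset T^*\{S_h^c[v](0)\}$ forces $|z| \ge C h^{1/2}$. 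Because $z$ is a positive scalar multiple of $Tx$, one concludes
\beqs
x \cdot y = (Tx)\cdot z = |Tx|\,|z| \ge C\,h,
\eeqs
which is \eqref{inn1} after adjusting the constant.

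There is no substantial obstacle here: the argument is mechanical once Corollary \ref{cdual} is available. The only point that requires care is the invariance identity $x\cdot y = (Tx)\cdot(T^*y)$ — this is precisely why the dual sub-level set is normalised by the transpose-inverse $T^*$ rather than by $T$ itself, and it turns the two-sided bounds of Corollary \ref{cdual} into the bilinear pairing estimates \eqref{inn} and \eqref{inn1}.
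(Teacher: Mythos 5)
Your proof is correct, and since the paper states Corollary~\ref{cdual-1} as an immediate consequence of Corollary~\ref{cdual} without writing out an argument, your derivation supplies exactly the intended reasoning. The identity $x\cdot y = (Tx)\cdot (T^*y)$, valid because $T^*=(T')^{-1}$, converts the ball inclusions of Corollary~\ref{cdual} into the two bilinear estimates: Cauchy--Schwarz on the outer balls gives \eqref{inn}, and for \eqref{inn1} taking $y$ so that $T^*y$ is the boundary point of $T^*\{S_h^c[v](0)\}$ in the direction of $Tx$ makes $(Tx)\cdot(T^*y) = |Tx|\,|T^*y|$, which the inner-ball inclusions bound below by $C h$. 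The only cosmetic point is that the constant you obtain in \eqref{inn1} is the one from Corollary~\ref{cdual}, not its reciprocal, but as you note this is immaterial.
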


\begin{remark}\label{dforsh}
Similarly to Corollary \ref{cdual-1}, by Lemma \ref{rela1} we also have the following relation between $S_h[u](0)$ and $S_h[v](0):$
\begin{equation}\label{cdual2}
|x\cdot y|\le C h \quad  \ \ \ \forall\    x\in S_h[u](0),\   y\in S_h[v](0).
\end{equation}
\end{remark}

\begin{remark} \label{cwidth}
Given any unit vector $e\in\mathbb{R}^n$,
let $$d_1:=\sup\left\{|x\cdot e|: x\in S_h^c[u](0)\right\},\quad d_2:=\sup\left\{|x\cdot e|: x\in S_h[u](0)\right\}$$ be the  width of $S_h^c[u](0)$ and $S_h[u](0),$ respectively, in $e$ direction. 
Note that $S^c_{bh}(0)\subset CbS^c_h(0)$ and $S^c_h(0)\subset CbS^c_{b^{-1}h}(0)$, where $b$ is the constant in Lemma \ref{rela1} and $C$ is a constant independent of $h$, (see \cite[Observation b) in Lemma 4.1]{C96}).
Then by Lemma \ref{rela1}, Lemma \ref{luni}, \eqref{gV1} and \eqref{gV2}, we can obtain $d_1\approx d_2.$
\end{remark}

\begin{remark}\label{invlim}
The estimates in this section are invariant under affine transforms. 
Let $S_{h_j}[u](x_j)$ be a sequence of sub-level sets and
let $T_j$ be a linear transform such that $T_j\big(S_{h_j}[u](x_j)\big)$ has a good shape and $T_j(x_j)=0$,
where $x_j\in\bom$ and $h_j\to 0$ as $j\to\infty$. Denote
	\beq\label{rmaf} 
		u_j (x):=\frac{1}{h_j}u(T_{h_j}^{-1}x)\quad\text{ and }\quad \Om_j:=T_j(\Om). 
	\eeq
Then the estimates in Lemmas \ref{Lemma2.2}, \ref{rela1} and \ref{luni} also hold for $u_j, \Om_j$ 
with the same constants $r, b, \delta_0$ independent of the sequence $h_j$. 
Assume that $u_j, \Om_j$ sub-converge as $j\to\infty$ to limits $u_0, \Om_0$. 
One sees that these estimates hold for $u_0$ near $0$ as well, 
again with the same constants $r, b, \delta_0$, which depend only on $n, \lambda, \Om, \Om^*$, 
but are independent of $\Om_0$.
Similarly, by taking limits, the estimates in Corollaries \ref{cdual}--\ref{cdual-1}
for the centred sub-level sets $S_{h_j}^c[u], S_{h_j}^c[v]$ also hold for the limits $u_0, v_0$.

Furthermore, 
by Caffarelli's geometric decay estimate (see \cite[Lemma 4]{C92},  \cite[Lemma 2.2]{C96}), 
one infers the strict convexity and $C^{1,\delta}$ regularity of solutions, namely
\begin{equation}\label{cc1alpha}
C^{-1} |x|^{1+ \delta^{-1} }\le u(x)\le  C|x|^{1+\delta}\ \ \forall\ x\in S_1[u](0),
\end{equation}
if $u(0)=0, Du(0)=0$ and $S_1[u](0)$ is normalised, 
where $C,  \delta>0$ depend only on $n,\lambda$  (assuming that $\pom\cap\p S_1[u](0)$ and $\pom^*$ are convex). 
\eqref{cc1alpha} also holds for the sequence $u_j$ and its limit $u_0$ near $0$,
with the same constants.
\end{remark}

\section{Tangential $C^{1,\alpha}$ regularity}\label{s3}

The tangential $C^{1,\alpha}$ regularity of $u$, for any given $\alpha\in(0,1)$, was established in \cite {C96},
where  $\Om$ is assumed to be a uniformly convex domain with $C^2$ boundary.
But the same strategy applies to convex domains with $C^{1,1}$ boundary.
To see this let us outline the proof here. 

Let $0\in\pom$ be a boundary point. We assume that locally 
$\partial\Omega$ is given by $\{x_n=\rho(x')\}$ for some convex function $\rho\in C^{1,1}$ satisfying
\begin{align*}
  &  \rho(0)=0, \ \ \ \ D\rho(0)=0,\\ 
  &  \rho(x')\leq C|x'|^2, 
\end{align*}
where $x'=(x_1,\cdots, x_{n-1})$. 
In this section, we assume that $0<f \in C^0(\overline\Omega)$ and $f(0)=1$.
To prove the tangential $C^{1,\alpha}$, it suffices to prove

\begin{lemma}\label{treg}
For any given $\alpha\in (0, 1)$, there exists a small constant $C=C_\alpha>0$ depending only on $n$, the modulus of continuity of $f$ and $\|\partial\Om\|_{C^{1,1}}$, but independent of $h$, 
such that for the centred sub-level set  $S^c_h(0)$, we have 
	\begin{equation}\label{incl} 
		S^c_h(0)\cap\{x_n=0\} \supset B_{C_\alpha h^{{1}/{(1+\alpha)}}}(0)\cap \{x_n=0\}  .
	\end{equation}
\end{lemma}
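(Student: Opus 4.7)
The plan is to argue by contradiction using an affine normalization and a compactness/blow-up analysis, following Caffarelli \cite{C96}. Suppose \eqref{incl} fails for the prescribed $\alpha\in(0,1)$: then one can extract $h_k\to 0$ and tangential boundary points $p_k=r_k e_1\in\partial S^c_{h_k}(0)\cap\{x_n=0\}$ with $r_k/h_k^{1/(1+\alpha)}\to 0$. Apply Lemma \ref{Lemma2.2} via an affine map $T_k$ with $B_1\subset T_k(S^c_{h_k}(0))\subset B_n$ and set
$$\tilde u_k(y) \ := \ h_k^{-1}(u-\ell_k)(T_k^{-1}y).$$
By \eqref{gV1}, $|\det T_k|\approx h_k^{-n/2}$, so $\tilde u_k$ satisfies a Monge--Amp\`ere equation whose right-hand side converges to $f(0)=1$; the global $C^{1,\delta}$ estimate of \cite{C92} and the interior $C^{2,\alpha}$ of \cite{C1} provide uniform $C^{1,\delta}$ and interior $C^{2,\alpha}$ control on $\tilde u_k$, enough for subsequential local uniform convergence.

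Next I would track how $T_k$ acts on the boundary. In tangential--normal coordinates (the hyperplane $\{x_n=0\}$ is preserved because $\nabla u(0)=0$ forces $\ell_k$ to be tangential, so the centering can be chosen block-diagonal), write $T_k=\mathrm{diag}(1/\tau_k^{(1)},\dots,1/\tau_k^{(n-1)},1/\nu_k)$. The local $C^{1,1}$ representation $\partial\Omega=\{x_n=\rho(x')\}$ with $\rho(x')\le C|x'|^2$ then transforms to
$$y_n \ = \ \nu_k^{-1}\rho(\tau_k^{(1)}y_1,\dots,\tau_k^{(n-1)}y_{n-1})\ \le\ C\sum_{i=1}^{n-1}\frac{(\tau_k^{(i)})^2}{\nu_k}\, y_i^2.$$
The failure hypothesis $\tau_k^{(1)}\le r_k=o(h_k^{1/(1+\alpha)})$ combined with $\prod_i\tau_k^{(i)}\cdot\nu_k\approx h_k^{n/2}$ (from \eqref{gV1}) yields $(\tau_k^{(1)})^2/\nu_k\to 0$; a parallel bound for the other $\tau_k^{(i)}$ follows from the already-known $C^{1,\delta}$ lower bound on the tangential axes. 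Consequently $T_k(\partial\Omega)$ flattens to $\{y_n=0\}$, and along a subsequence $\tilde u_k\to\tilde u_\infty$ locally uniformly, with $\tilde u_\infty$ a convex solution of $\det D^2\tilde u_\infty=1$ on a good-shape normalized section contained in $\{y_n\ge 0\}$, attaining its minimum $0$ at the origin and equal to $1$ on the boundary of the section.

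The contradiction is extracted on the image side. By Corollary \ref{cdual} the dual section $S^c_{h_k}[v](0)$ has extent in the $e_1$-direction at least $c\,h_k/\tau_k^{(1)}$; combining this with the analogous tangential H\"older estimate for the dual potential $v$ on $\Omega^*$ (obtained by running the same scheme on the dual side, starting from the base exponent $\delta$ of \cite{C92}) forces an impossible ratio between tangential and normal extents of $S^c_{h_k}[v](0)$ inside the bounded set $\Omega^*$. Iterating the argument improves the tangential H\"older exponent by a definite amount at each step, so all $\alpha\in(0,1)$ are reached in finitely many iterations.

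The hardest step is making this bootstrap quantitative: the $C^{1,1}$ bound $\rho\le C|x'|^2$ is exactly what is needed to drive the flattening of $T_k(\partial\Omega)$ fast enough to improve the exponent each step, and the uniform density from Lemma \ref{luni} is essential to guarantee that the normalized limit section is non-degenerate. In the uniformly convex $C^2$ case of \cite{C96} the strict positivity of the quadratic part of $\rho$ does this work automatically; here the absence of uniform convexity is compensated by the volume estimates of Section \ref{s2}.
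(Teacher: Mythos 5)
Your proposal takes a genuinely different route from the paper's, but the route has a gap at precisely the place where the paper's key idea lives, and I do not think the gap can be papered over.

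The paper's proof does not pass to a blow-up limit and derive a contradiction on the image side. It fixes the single critical height $h_0$ at which $\partial S^c_{h_0}(0)\cap\{x_n=0\}$ first touches $\partial B_{C_0 h_0^{1/(1+\alpha)}}$, normalises by $\mathcal T=\mathcal T_2\circ\mathcal T_1$, and shows (as you do) that $\mathcal T(\partial\Omega)$ flattens in the $e_1$-direction. The decisive step, which is entirely absent from your proposal, is then to solve the auxiliary problem $\det D^2 w=\chi_{D_{h_0}}$ in $\mathcal T(S^c_{Mh_0}(0))$ with $w=\tilde u$ on the boundary, where $D_{h_0}$ is the cylinder obtained by erasing the $z_1$-dependence of $\mathcal T(S^c_{Mh_0}(0)\cap\Omega)$. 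Because the right-hand side is independent of $z_1$, Pogorelov's interior estimate yields $\partial_1^2 w\le C$ even though the data is discontinuous in $(z_2,\dots,z_n)$ and no regularity of $\mathcal T(\partial\Omega)$ in those directions is used. Together with the maximum-principle bound $|w-\tilde u|\le C[\delta_0+V_{h_0}]^{1/n}$, this gives the tangential $C^{1,\alpha}$ estimate for every $\alpha<1$ in a single pass. There is no bootstrap of exponents.

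In your version, the contradiction step is asserted but not established. You say that $S^c_{h_k}[v](0)$ having $e_1$-extent at least $c h_k/\tau_k^{(1)}$ forces ``an impossible ratio inside the bounded set $\Omega^*$.'' But $e_1$ is the \emph{normal} direction for $\Omega^*$, and under the failure hypothesis $\tau_k^{(1)}=o(h_k^{1/(1+\alpha)})$ one only gets $h_k/\tau_k^{(1)}\gg h_k^{\alpha/(1+\alpha)}$, which still tends to $0$; boundedness of $\Omega^*$ gives nothing. The ``analogous tangential H\"older estimate for $v$'' controls the $e_2,\dots,e_n$ extents of $S^c_{h_k}[v]$, not the $e_1$ extent, and feeding those bounds into the volume relation $\prod_i\ell_i\approx h_k^{n/2}$ only reproduces the already-known lower bound $\tau_k^{(1)}\gtrsim h_k^{1/(1+\delta)}$ (for $n=2$ this is exact, and for $n>2$ it is even weaker). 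This is consistent with $\tau_k^{(1)}=o(h_k^{1/(1+\alpha)})$ whenever $\alpha>\delta$, so no contradiction arises. The concluding claim that ``iterating the argument improves the tangential H\"older exponent by a definite amount at each step, so all $\alpha\in(0,1)$ are reached in finitely many iterations'' is not substantiated: you give no quantitative gain per step and no reason the increments do not shrink geometrically and stall below $1$. Without the Pogorelov comparison function — or some replacement for it — the argument does not close.

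As a smaller remark, the assertion that ``$\nabla u(0)=0$ forces $\ell_k$ to be tangential, so the centering can be chosen block-diagonal'' is not quite right: the slope of the centering affine function $\hat\ell$ for $S^c_h(0)$ is determined by the barycentre condition, not by $\nabla u(0)$. The paper handles the resulting shear explicitly via $\mathcal T_1$ and the slope bounds $k_i\le a_i/b$; this bookkeeping is needed to make the flattening computation rigorous.
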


The idea of the proof is as follows.
 For each $h>0$, there is an ellipsoid  $E_h$ such that
	\begin{equation*}
		S_h^c(0) \sim E_h=\bigg\{ \sum_{i=1}^{n-1}\Big(\frac{x_i-k_ix_n}{a_i}\Big)^2 + \Big(\frac{x_n}{a_n}\Big)^2 \leq 1 \bigg\},
	\end{equation*}
where $a_1\leq\cdots\leq a_{n-1}$, namely
$\beta E_h\subset S_h^c(0) \subset  \beta^{-1} E_h$ for some constant $\beta$ depending only on $n$.
Let $be_n$ be the intersection of the positive $x_n$-axis and $\p E_{h}$.

 We first make a linear transform 
\begin{equation}\label{normT1}
	\mathcal{T}_1:\qquad	\Big\{\begin{array}{l} 
			y_i = x_i - k_i x_n \qquad i<n , \\
			y_n = x_n .
		\end{array}
	\end{equation}
This transformation $\mathcal{T}_1$ moves the centre of $E_{h}\cap\{x_n=b\}$ to the point $be_n$. 
Hence, the ``slope" $k_i$  is bounded by 
	\begin{equation}\label{slope33}
		k_i  \le \frac{a_i}{b} \quad \mbox{ for } i=1,\cdots,n-1. 
	\end{equation}

If the inclusion \eqref{incl} does not hold, 
let $h_0>0$  be the largest constant such that \eqref{incl} holds for $h>h_0$ and 
$\p S_{h_0}^c(0)\cap\{x_n=0\}$ touches $\p B_{C_0 h_0^{{1}/{(1+\alpha)}}}$, 
where the constant $C_0$ is chosen small so that
$h_0$ is also small. Then 
\beq\label{width}
a_1\leq C_0 h_0^{{1}/{(1+\alpha)}}.
\eeq
By the $C^{1,\delta}$ regularity of $u$ \cite {C92}, we have 
\beq\label{height}
a_n\ge b\geq Ch_0^{1/(1+\delta)}.
\eeq

Next we make the linear transform 
\begin{equation}\label{normT2}
	\mathcal{T}_2:\qquad	 
			z_i = y_i/a_i \qquad i=1, \cdots, n ,
\end{equation}
such that the sub-level set $S^c_{h_0}(0)$ is ``normalised''. 
Denote $\mathcal{T}=\mathcal{T}_2\circ \mathcal{T}_1$. 
The next  lemma shows that near the origin, the $\mathcal{T}(\Omega)$ tends to be flat in $e_1$ direction as $h_0\rightarrow 0.$

\begin{lemma}
For any given $R>0$, the limit of $\mathcal{T}(\partial\Omega)\cap B_R(0)$ (as $h_0\rightarrow 0$) is flat in $e_1$ direction.
\end{lemma}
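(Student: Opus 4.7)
The plan is to express $\mathcal{T}(\partial\Omega)$ near the origin as a graph in the $z$-coordinates, and then to show that the $z_1$-dependence of this graph becomes negligible as $h_0\to 0$. A point $z\in\mathcal{T}(\partial\Omega)$ corresponds to $x=\mathcal{T}^{-1}(z)\in\partial\Omega$; the definitions of $\mathcal{T}_1$ and $\mathcal{T}_2$ give
\begin{equation*}
x_i = a_i z_i + k_i a_n z_n \ \ (i<n), \qquad x_n = a_n z_n .
\end{equation*}
Imposing the local boundary equation $x_n=\rho(x')$ yields the implicit relation
\begin{equation*}
a_n z_n \;=\; \rho\bigl(a_1 z_1 + k_1 a_n z_n,\,\ldots,\,a_{n-1} z_{n-1} + k_{n-1} a_n z_n\bigr),
\end{equation*}
which, by $D\rho(0)=0$ and the implicit function theorem, defines $z_n=F(z_1,\ldots,z_{n-1})$ in a neighbourhood of the origin.

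Next I would apply the quadratic majorant $\rho(x')\le C|x'|^2$ together with the slope bound $k_i\le a_i/b$ from \eqref{slope33} and $b\le a_n$. Using the a priori smallness of $F$ (for small $h_0$, from $F(0)=0$ and continuity), the shear terms $k_ia_nF$ in the argument of $\rho$ can be absorbed into the left-hand side, yielding
\begin{equation*}
F(z') \;\le\; \frac{C}{a_n}\sum_{i<n} a_i^2 z_i^2 \qquad\text{on } B_R(0) .
\end{equation*}
Restricted to the slice $\{z_2=\cdots=z_{n-1}=0\}\cap B_R(0)$ this becomes $F(z_1,0,\ldots,0)\le C a_1^2 R^2/a_n$. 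Combining \eqref{width} and \eqref{height} gives
\begin{equation*}
\frac{a_1^2}{a_n} \;\le\; C\, h_0^{\,2/(1+\alpha)-1/(1+\delta)} \;\longrightarrow\; 0 \quad\text{as } h_0\to 0,
\end{equation*}
since $2(1+\delta)>1+\alpha$ for every $\alpha\in(0,1)$ and $\delta\in(0,1)$. An analogous estimate for $F(z_1,z'')-F(0,z'')$ with arbitrary $z''=(z_2,\ldots,z_{n-1})$ fixed in $B_R(0)$ then shows that the $z_1$-oscillation of $F$ vanishes uniformly.

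Consequently, any Hausdorff subsequential limit of $\mathcal{T}(\partial\Omega)\cap B_R(0)$ is a graph independent of the first coordinate, which is the claimed flatness in the $e_1$ direction. The main difficulty lies in the bootstrap used in the second paragraph: the arguments of $\rho$ contain the shear contributions $k_i a_n F$ generated by $\mathcal{T}_1$, and absorbing them requires both a small starting value of $F$ and the tight slope bound $k_i\le a_i/b$. Once this bootstrap is secured, the conclusion reduces to the favourable comparison of the scaling exponents $1/(1+\alpha)$ in $a_1$ versus $1/(1+\delta)$ in $a_n$, which holds for every $\alpha\in(0,1)$.
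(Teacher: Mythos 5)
Your key exponent computation --- $a_1\lesssim h_0^{1/(1+\alpha)}$ by \eqref{width}, $a_n\ge b\gtrsim h_0^{1/(1+\delta)}$ by \eqref{height}, and $2(1+\delta)>1+\alpha$ forcing $a_1^2/a_n\to 0$ --- is precisely the paper's mechanism. But the paper avoids the implicit-function set-up: it parametrises by the boundary point $p'=\pm h_0^\gamma e_1$ with $\gamma\in(\tfrac{1}{2(1+\delta)},\tfrac{1}{1+\alpha})$, computes $q=\mathcal{T}(p)$ directly, and checks $k_1\rho(p')\ll h_0^\gamma$ and $|q_i|\le\rho(p')/b\to 0$. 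In your version, the ``a priori smallness of $F$'' used to seed the bootstrap is not automatic: $F$ depends on $h_0$ through $\rho$, $a_i$, $k_i$, so continuity at the origin gives no uniform smallness on a fixed $B_R$ as $h_0\to 0$, and the self-consistency needed to absorb the shear (roughly $a_1^2R^2\sum_i k_i^2\ll 1$) involves $\sum_i k_i^2\lesssim a_{n-1}^2/b^2$, for which no favourable a priori bound is available. This step should be replaced by the paper's direct computation.

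The more serious gap is the last step. The claim that ``an analogous estimate for $F(z_1,z'')-F(0,z'')$'' controls the $z_1$-oscillation for arbitrary fixed $z''$ is not justified: whichever version of the absorption one uses, it is anchored at $F(0)=0$, and for $z''\neq 0$ the base value $F(0,z'')$ is generically of order one, so the shear terms $k_i a_n F$ inside $\rho$ are not small and the argument never gets started. What is required --- and what the paper's closing ``By convexity'' is doing --- is the following: once one knows the $x_1$-axis lies in the boundary of the Hausdorff limit $\Omega_0$ of $\mathcal{T}(\Omega)$, the convexity of $\Omega_0$ forces it to be a cylinder over that line (a closed convex set containing a line is invariant under translations along it), which is exactly the claimed flatness in the $e_1$ direction. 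Replacing your final sentence with this convexity argument, and the bootstrap with the paper's direct estimate at $p'=\pm h_0^\gamma e_1$, would give a complete proof.
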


\begin{proof}
Let $p'=(h_0^\gamma,0,\cdots,0)$ be a point  on the $x_1$-axis,
where $\gamma$ is chosen so that $\frac{1}{2(1+\delta)}<\gamma<\frac{1}{1+\alpha}$.
Denote $p=(p', \rho(p'))$ and $q=\mathcal{T}(p).$
By direct computation we have
\begin{align*}
  & q_1 =\frac{1}{a_1}(h_0^\gamma-k_1\rho(p')),\\ 
  & q_i=-\frac{1}{a_i}k_i\rho(p'), \ \ \ \ i=2,\cdots, n-1,\\
  & q_n=\frac{1}{a_n}\rho(p').
\end{align*}
Note that $\rho(p')\leq Ch_0^{2\gamma}.$
By \eqref{slope33}, \eqref{width} and \eqref{height} we have 
$$k_1\rho(p')\lesssim h_0^{\frac{1}{1+\alpha}-\frac{1}{1+\delta}+2\gamma} \ll h_0^\gamma,$$
where the last inequality is due to the choice of $\gamma.$ 
Hence  $q_1\rightarrow \infty$ as $h_0\rightarrow 0.$
It is also easy to verify that $|q_i|\leq \frac{1}{b}h_0^{2\gamma}\rightarrow 0$ 
$(i=2,\cdots, n-1)$, and $ q_n\rightarrow 0$,
as $h_0\rightarrow 0$.
Note that the above computation still works if $p'=(-h_0^\gamma,0,\cdots,0).$
Therefore the  limit of $\mathcal{T}(\partial\Omega)$ (as $h_0\rightarrow 0$) contains the $x_1$ axis. 
By convexity, we see that the limit of $\mathcal{T}(\Omega)$ is independent of the $e_1$ direction.
\end{proof}

Since $\mathcal{T}\{S^c_h(0)\}$ is normalised, 
the domain  $\mathcal{T}\big(\Omega\cap S^c_{Mh}(0) \big)$  has a good shape,
where $M>1$ is chosen such that $S^c_{h}(0)\subset \frac 12 S^c_{Mh}(0)$.
By the above discussion, 
the boundary part  $\mathcal{T}\{\pom\cap S^c_{Mh}(0) \}$ becomes flat in direction $e_1$ as $h\to 0$.  As in \cite{C96}, we
denote 
$$ D_h=\{z\in\mathcal{T}(S^c_{Mh}(0)) :\ 
  z=\hat z+te_1 \mbox{ for some } \hat z\in\mathcal{T}(S^c_{Mh}(0)\cap\Om) \mbox{ and } t\in\R\}$$
by erasing the dependence on $x_1$. 
  Then
  $$ \mathcal{T}(S^c_{Mh}(0)\cap\Om)\subset D_h\subset \mathcal{T}(S^c_{Mh}(0))\cap\{x_n>0\}$$
  and near the origin, $\partial D_h$ is flat in the $x_1$-direction.
  
Let $w$ be the solution to 
\begin{equation}\label{2ndw}
\Big\{ \begin{aligned}
 &\det D^2 w  = \chi_{D_{h_0}}\ \ \text{in}\ \mathcal{T}(S^c_{Mh_0}(0)),\\
  &w  = \tilde u\ \ \text{on}\ \ \p \{\mathcal{T}(S^c_{Mh_0}(0))\} ,
  \end{aligned} 
\end{equation}
where $\tilde u(z)= |\mathcal T| ^{2/n} u(\mathcal T^{-1} (z))$. 
A key observation in this proof is that  Pogorelov's interior second derivative estimate applies to $w_{11}$, 
even though the right hand side of \eqref{2ndw} is discontinuous in $(z_2, \cdots, z_n)$,
and no regularity of $\mathcal{T}(\pom)$ in $(z_2, \cdots, z_n)$ is assumed.   
Therefore $w$ is $C^{1,1}$ in $z_1$. By the maximum principle one can give an estimate for 
$|w-\tilde u|$: 
\beq\label{wu}
|w-\tilde u|\le C[\delta_0+V_{h_0}]^{1/n} ,
\eeq
where $\delta_0=\sup\{|f(x)-1|:\  x\in S^c_{Mh_0}(0)\cap\Om\}$, 
and 
$V_{h_0}=\text{Vol}\{ D_{h_0}-\mathcal T(S^c_{Mh_0}(0)\cap\Om)\}=o(h_0).$
Changing back one obtains an estimate for $u$ from the estimate $\p^2_1w\le C$, 
from which one infers the tangential $C^{1,\alpha}$ for any given $\alpha\in (0,1)$. 
For details see \cite{C96}.

\begin{corollary}\label{newcoro}
Assume that the function $f$, defined in $\Om$, is a positive constant near the origin, and both $\pom$ and $\p\Om^*$ are flat near the origin in a direction $e$.
Then near the origin, $u$ is $C^{1,1}$ and uniformly convex in the direction $e$. 
\end{corollary}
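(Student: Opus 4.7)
The plan is to establish the two conclusions---$C^{1,1}$ and uniform convexity of $u$ in direction $e$---as the two-sided bounds $c \le u_{ee} \le C$ near the origin, with the upper bound coming from the Pogorelov-type construction already used in the proof of Lemma \ref{treg} and the lower bound coming from Legendre duality.

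For the upper bound $u_{ee} \le C$, I will revisit the auxiliary function $w$ from \eqref{2ndw}. Under the present hypotheses---$f$ is a positive constant and $\pom$ is flat in direction $e$ near the origin---the obstructions in the estimate \eqref{wu} vanish: $\delta_0 = \sup|f-1| = 0$, and the set $D_{h_0}$, obtained by erasing the $x_1$-dependence of $\mathcal{T}(S^c_{Mh_0}(0)\cap\Om)$, already coincides with $\mathcal{T}(S^c_{Mh_0}(0)\cap\Om)$ because the latter is cylindrical in the $z_1 = \mathcal{T}(e)$ direction, so $V_{h_0} = 0$ as well. By uniqueness for the Monge--Amp\`ere equation with prescribed boundary data, $w \equiv \tilde u$. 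Pogorelov's interior second-derivative estimate in the flat direction $z_1$ applies to $w - \ell$ and gives $\tilde u_{z_1 z_1} \le C$ on $\tfrac{1}{2}\mathcal{T}(S^c_{Mh_0}(0))$. Pulling back through $\mathcal{T}$ and observing that the flatness of $\pom$ in direction $e$ together with $\Vol(S^c_h(0)) \approx h^{n/2}$ force the principal radius in that direction to satisfy $a_1 \approx h^{1/2}$, one recovers $u_{ee}(x) \le C$ at every $x \in \Om$ sufficiently close to $0$.

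For the lower bound $u_{ee} \ge c > 0$, I will invoke the Legendre dual $v = u^*$. It satisfies $\det D^2 v = 1/f$ (still a positive constant) with $Dv(\Om^*) = \Om$ and $v(0) = 0 = Dv(0)$, while $\p\Om^*$ is flat in direction $e$ by hypothesis. The upper-bound argument above applies verbatim to $v$, yielding $v_{ee}(y) \le C$ near the origin in $\Om^*$. Writing $y = Du(x)$ and using $D^2 v(y) = (D^2 u(x))^{-1}$, the Cauchy--Schwarz inequality applied to the unit vector $e$ gives
\beq
1 = \langle e,e\rangle^2 \le \langle D^2 u\cdot e, e\rangle \,\langle (D^2 u)^{-1} e, e\rangle = u_{ee}(x)\,v_{ee}(y),
\eeq
hence $u_{ee}(x) \ge 1/C$.

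The main technical obstacle is justifying $a_1 \approx h^{1/2}$ and the validity of the Pogorelov estimate in this partially-flat boundary geometry: the portion of $\p D_{h_0}$ lying on $\mathcal{T}(\pom)$ is flat in the $z_1$ direction but only $C^{1,1}$ in the remaining tangential directions. This is exactly the setting already treated in \cite{C96} and revisited in Lemma \ref{treg}, so the needed estimates are at hand; what must be verified is only that the constants can be chosen independent of $h$ as $h \to 0$, which follows from the scale-invariance of the blow-up argument together with the uniform density of Lemma \ref{luni}.
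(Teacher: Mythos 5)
Your upper-bound argument ($u_{ee}\le C$) is essentially the paper's: the hypotheses force $\delta_0=V_{h_0}=0$ in \eqref{wu}, so the comparison with $w$ degenerates to equality and Pogorelov's estimate in the flat direction $z_1$ gives tangential $C^{1,1}$. Where you diverge is the lower bound. The paper establishes uniform convexity in direction $e$ \emph{geometrically}: flatness of $\p\Om^*$ together with the upper-bound argument applied to $v$ gives $B_{Ch^{1/2}}(0)\subset S_h^c[v](0)$ in direction $e$, and then the duality of Corollary \ref{cdual-1} (the relation $|x\cdot y|\le Ch$ for $x\in S_h^c[u]$, $y\in S_h^c[v]$) immediately pins the extent of $S_h^c[u]$ in direction $e$ to at most $Ch^{1/2}$, which is exactly uniform convexity along $e$. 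You instead work \emph{pointwise}: apply the upper-bound argument to the Legendre dual $v$ to get $v_{ee}\le C$, invoke $D^2v(Du(x))=(D^2u(x))^{-1}$, and use Cauchy--Schwarz $\langle Ae,e\rangle\langle A^{-1}e,e\rangle\ge 1$ to conclude $u_{ee}\ge 1/C$. Both routes are correct and really are dual versions of each other---Corollary \ref{cdual-1} is the integrated, sub-level-set avatar of the Hessian-inverse identity you use. Your route is slightly more direct, but at this stage $u$ and $v$ are only known to be $C^{1,1}$ in the $e$-direction, so the Hessian inverse identity and the Cauchy--Schwarz step should be stated as holding almost everywhere (Alexandrov second derivatives), with the final uniform-convexity conclusion following from convexity plus the a.e.\ lower bound. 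The paper's sub-level-set formulation sidesteps that regularity caveat by never touching pointwise second derivatives. One more small point: rather than deriving $w\equiv\tilde u$ from uniqueness of the Dirichlet problem (true, but an extra fact to invoke), it is cleaner to simply observe that \eqref{wu} with $\delta_0=V_{h_0}=0$ already yields $w=\tilde u$, which is what the paper does.
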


\begin{proof}
From the assumption, one can see that $\delta_0$ and $V_{h_0}$ vanish in estimate \eqref{wu},
thus $u\in C^{1,1}$ in the direction $e$. 
Since $\pom^*$ is also flat in direction $e$, by Corollary \ref{cdual} we have $B_{Ch^{1/2}}(0)\subset S_h^c[v](0)$ along $e$ direction. 
And then by the duality in Corollary \ref{cdual-1}, we have ${S_h^c[u]}(0)\subset B_{Ch^{1/2}(0)}$ along $e$ direction. 
Hence $u$ is uniformly convex in the direction $e$. 
\end{proof}

\section{Uniform obliqueness in dimension two}\label{s4}
 
The uniform obliqueness (Lemma \ref{lem obli}) is a key ingredient in proving the  boundary $C^{2,\alpha}$ and $W^{2,p}$ estimates.
The proof is technically rather complicated. For the reader's convenience, we divide the proof into two sections.
In this section we prove Lemma \ref{lem obli} in dimension two.
In dimension two, we assume that $\Om, \Om^*$ are bounded convex domains with
$C^{1, \gamma}$ boundaries for a small $\gamma>0$, and $f\in C^0(\bom)$.
In the next section we prove Lemma \ref{lem obli} in high dimensions.
In dimension two, our proof consists of the following four steps.

\begin{itemize}
\item [(i)] 
If the uniform obliqueness does not hold at the origin, 
we express the boundaries of $\Om$ and $\Om^*$ by \eqref{rho*}, and
prove a ``balance property'' of the sub-level set $S_h[u](0)$ in Lemma \ref{lemban}. 
It implies the decay estimates \eqref{key1} and \eqref{key2}.  

\item [(ii)] 
We introduce a blow-up argument so that the inhomogeneous term $f$ becomes a positive constant in the limit. 
 
\item [(iii)]
The blow-up limit $u_0$ may not be smooth.
We construct a smooth sequence $\{u_k\}$, which converges to $u_0$ locally uniformly.  

\item  [(iv)] 
We introduce the auxiliary function  $w = \p_1u_0+u_0-x_1\p_1u_0$. 
By Steps (ii), (iii) and the maximum principle, the function $\underline{w}(t) = \inf w(t,\cdot)$ is concave near the origin.
The concavity and the decay estimate  \eqref{key2} imply that $\underline w\equiv 0$ for $t>0$ small, which contradicts to the strict convexity of $u_0$. Hence we infer the uniform obliqueness.
\end{itemize}

\subsection{Balance property and decay estimate}\label{ss41}

Assume that $0\in\pom$ and $\Omega\subset\{x_2>0\}$. 
To prove the uniform obliqueness, 
by {the global $C^{1,\delta}$ regularity \cite {C92}},
we may assume to the contrary that $u(0)=0$, $Du(0)=0\in \partial\Omega^*$ and $\Omega^*\subset\{y_1> 0\}$. 
Then we have
\begin{enumerate}[$(i)$]
\item $u_1=: u_{x_1}>0$ in $\Omega$ and $v_2=: v_{y_2}>0$ in $\Om^*$;  it implies that  \label{ppi}
\item if $x\in S_h(0)$, then $x-te_1\in S_h(0)\ $ $\forall\ t>0$, provided $x-te_1\in\Omega$, \label{ppii}
\end{enumerate}
where $S_h(0)=S_h[u](0)$ is the sub-level set of $u$, introduced in \S\ref{s2}. 
Accordingly,  the boundaries $\partial\Omega$ and $\partial\Omega^*$ near the origin can be expressed as
\beq\label{rho*}
 {\begin{split}
		\partial\Omega &= \{x_2 = \rho(x_1)\}, \\
		\partial\Omega^* &= \{y_1 = \rho^*(y_2)\},
\end{split} } 
\eeq
with the following properties:
\begin{itemize}

\item [($\text{\bf H}_1$)]  $\rho, \rho^* \ge 0$ are convex functions defined in an interval  $(-r_0, r_0)$ 
and satisfying $\rho(0)=0$ and $\rho^*(0)=0$, where $r_0>0$ is a constant.

\item [($\text{\bf H}_2$)]  Denote  $\sigma(t) = |t|^{1+\gamma}$. 
By the assumption $\pom\in C^{1, \gamma}$, we have
\begin{equation}\label{bdrya}
		\rho(t) \leq C\sigma(t) \quad \mbox{ for } t\le 0.
	\end{equation}
 \end{itemize}

\begin{remark}\label{R4.1}
(i)
We will derive a contradiction from ($\text{\bf H}_1$) and ($\text{\bf H}_2$).
Note that it suffices to assume $t\le 0$ in \eqref{bdrya}. 
By Lemma \ref{lemban} below, we can show that \eqref{bdrya} holds for $t>0$ as well. 
Hence $\pom$ is $C^1$ at $0$. 
\\
(ii)
As the reader will see, in our argument below we will not use any boundary regularity for $\pom^*$.
For any point $p\in \pom\cap\{x_1<0\}$, 
since the inner product $\langle \nu(p), \nu^*(Du(p))\rangle\ge 0$, $(\text{\bf H}_2)$ implies that 
$\rho^*(t)=o(t)\ \text{for}\ t\ge 0. $ 
\\
(iii)  For clarity in this section we will always assume that $\pom, \pom^*\in C^{1, \gamma}$, for a small $\gamma>0$.
By Remark \ref{invlim}, the constants in this section depend on $n, \lambda, \Om, \Om^*$ (inner and outer radii of $\Om,\Om^*$ and $\gamma$). In the approximation $\{u_k\}$ in \S\ref{ns43}, we also allow that the constants depend on $k$. But all the constants are independent of $h$ and $u$ (for $h>0$ small). The continuity of $f$ is used only in the blow-up process, such that the RHS of \eqref{eq00} is a constant. 
In this section we do not use the tangential $C^{1,\alpha}$ regularity of \S\ref{s3}.
\end{remark}

Let $q=(q_1,q_2)$ and $\xi=(\xi_1,\xi_2)$ be two points  on $\partial S_h(0)\cap \overline\Omega$ such that
	\beq\label{qxi}
	 {\begin{aligned}
		&\langle q, e_1 \rangle = \sup\{\langle x, e_1 \rangle : x\in S_h(0)\},\\
		&\langle \xi, e_1 \rangle = \inf\{\langle x, e_1 \rangle : x\in S_h(0)\}.
		\end{aligned}}
	\eeq
Apparently  $q_1>0$ and $\xi_1<0$, see Figure 2 below.
Note that $u_{x_2}(p)<0$ for any boundary point $p\in\pom\cap\{x_1>0\}$. Hence $q$ is an interior point of $\Om$.
The following lemma shows that the area of $S_h[u](0)\cap\{x_1>0\}$ can balance that of $S_h[u](0)\cap\{x_1<0\}$.

\begin{lemma}\label{lemban}
For all $h>0$ small, we have the ``balance" property
	\begin{equation}\label{rbalan}
		q_1 \geq \delta_0|\xi_1|,
	\end{equation}
where $\delta_0>0$ is a constant independent of $h$. 
\end{lemma}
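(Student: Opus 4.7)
The proof will be by contradiction: suppose there is a sequence $h_j\downarrow 0$ along which $q_1^{(j)}/|\xi_1^{(j)}| \to 0$. The strategy is to combine the good ellipsoidal shape of the centred sub-level set (Lemma \ref{Lemma2.2}) with property (\ref{ppii}) and hypothesis $(\text{H}_2)$ to extract a contradiction.

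First, by Lemma \ref{Lemma2.2} there is a linear transformation $T$ normalising $S^c_{bh}(0)$ to sit between two balls. Since $0$ is the mass centre of $S^c_{bh}(0)$, it follows that $S^c_{bh}(0) \sim E$ for an ellipsoid $E$ centred at $0$, so its $e_1$-extrema are automatically balanced:
\[
L := \sup_{x\in S^c_{bh}(0)} x_1 \,\approx\, \Bigl|\inf_{x\in S^c_{bh}(0)} x_1\Bigr|.
\]
By Lemma \ref{rela1}, $S_h(0)\subset S^c_{bh}(0)\cap \Omega$, which yields the automatic bound $|\xi_1|\leq CL$. Hence, to contradict the hypothesis, it suffices to locate, inside $S^c_{b^{-1}h}(0)\cap \Omega \subset S_h(0)$, a point whose $x_1$-coordinate is of order $L$.

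Property (\ref{ppii}) forces the leftmost point $\xi$ of $S_h(0)$ to lie on $\partial\Omega$, and hypothesis $(\text{H}_2)$ then gives $\xi_2 = \rho(\xi_1) \leq \sigma(|\xi_1|) = o(|\xi_1|)$. Under the contradiction hypothesis one checks $|\xi_1|\approx L$, so $\xi \approx (-L,0)$. Reflecting through the mass centre $0$ of $E$, the ellipsoid contains a point $p_+ \approx (L,o(L))$ on or near $\partial E$. Rescaling by a universal factor to pass from $S^c_{bh}(0)$ to $S^c_{b^{-1}h}(0)$, a comparable point $\widetilde{p}_+$ with $x_1$-coordinate $\approx L$ and $x_2$-coordinate of order $o(L)$ lies in $S^c_{b^{-1}h}(0)$.

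The remaining issue, which is the main obstacle of the proof, is to show $\widetilde{p}_+\in \Omega$: for then $\widetilde{p}_+ \in S^c_{b^{-1}h}(0)\cap\Omega\subset S_h(0)$, and $q_1\geq (\widetilde{p}_+)_1 \approx L \geq c|\xi_1|$, contradicting $q_1/|\xi_1|\to 0$. Hypothesis $(\text{H}_2)$ constrains $\rho$ only on the negative side, so to control $\rho$ at the positive $x_1$-coordinate of $\widetilde{p}_+$ I would exploit the duality between the $u$- and $v$-pictures via Corollaries \ref{cdual} and \ref{cdual-1}: the obliqueness-failure hypothesis is symmetric between $u$ and $v$, so $(\text{H}_2)$ has a dual version for $\rho^*$, and transferring this flatness back to $\rho$ through the inner-product estimates \eqref{inn}, \eqref{inn1} is expected to close the argument. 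This duality transfer, which brings in the $C^{1,1}$ regularity of $\partial\Omega^*$ and the strict convexity estimate \eqref{stricon}, is where the technical crux lies.
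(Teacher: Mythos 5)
Your argument is genuinely different from the paper's, but the gap you flag as the ``main obstacle'' is not a technicality — it is essentially the content of the lemma. Producing a point $\widetilde p_+\in S^c_{b^{-1}h}(0)\cap\Omega\subset S_h(0)$ with $x_1$-coordinate $\approx L$ is, by the very definition of $q$, the assertion $q_1\gtrsim L\gtrsim|\xi_1|$, which is what you want to prove. Nothing in your preceding steps constrains $\rho$ at positive arguments, and the reflection is unpromising from the start: since $\rho\ge 0$, the leftmost boundary point $\xi$ has $\xi_2>0$, so its image $-\xi$ under reflection through the centroid has negative $x_2$-coordinate and lies outside $\Omega\subset\{x_2>0\}$; to re-enter $\Omega$ one must lift the $x_2$-coordinate above $\rho(L)$, which is exactly the uncontrolled quantity. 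The duality route you gesture at is not carried out, and it is unclear how it could break the circularity, since $(\text{\bf H}_2)$ and its dual govern $\rho$ only for $t\le 0$ and $\rho^*$ only for $t\ge 0$. The intermediate claim ``$|\xi_1|\approx L$'' is also asserted without justification; only $|\xi_1|\lesssim L$ follows from what you wrote.

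The paper sidesteps this entirely. It shears by \eqref{chan1} and scales by \eqref{chan2}--\eqref{chan3} so that $S_h(0)$ itself, rather than $S^c_{bh}(0)$, becomes a set of good shape, using $\mathrm{Vol}(S_h(0))\approx\lambda_1\lambda_2$ with $\lambda_1=q_1-\xi_1$ and $\lambda_2$ the vertical thickness at $t_0=\tfrac12(\xi_1+q_1)$; this yields $\|u_h\|_{L^\infty(B_1)}\le C$ and $u_h(0)=0$. Then the definition of $q$ gives, for free, $u_h\ge 1$ on $\{x_1\ge q_{h,1}\}$. Under the contradiction hypothesis $q_1=o(|\xi_1|)$, the normalised abscissa $q_{h,1}=q_1/(q_1+|\xi_1|)\to 0$, so the limit $u_0$ satisfies $u_0\ge 1$ on $\{x_1\ge 0\}$ while $u_0(0)=0$, violating the continuity of the bounded convex limit. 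No duality argument and no control of $\rho$ on the positive side is ever needed; only Lemmas~\ref{rela1} and~\ref{luni} and property $(i)$ enter.
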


\begin{figure}[h]
\centering
\includegraphics[scale=0.33]{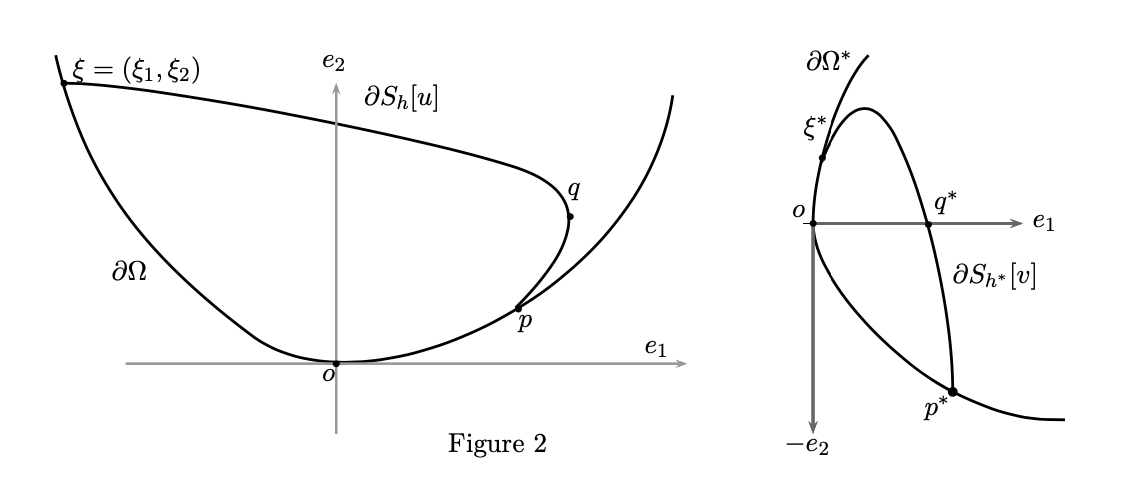}
\end{figure}

\begin{proof}
To prove \eqref{rbalan}, 
suppose to the contrary that
	\begin{equation}\label{contra}
		q_1 = o(|\xi_1|)
	\end{equation}
for a sequence $h\to 0$. 
Denote $t_0=\frac12(\xi_1+q_1)$. 
There is a unique $s_0>\rho(t_0)$ such that $u(t_0,s_0)=h$.
Denote
	\begin{equation*}
		x^c=(x^c_1, x^c_2) := \big( t_0, {\Small\text{$\frac12$}}  (s_0+\rho(t_0) ) \big).
	\end{equation*}
The point $x^c$ can be regarded as the centre of $S_h(0)$.
Denote 
	\begin{equation*}
	\Big\{ \begin{aligned}
		\lambda_1 &= q_1-\xi_1, \\[-3pt]
		\lambda_2 &= s_0-\rho(t_0).
	\end{aligned} 
	\end{equation*}
Apparently
	\begin{equation}\label{gooda}
		\Vol(S_h(0)) \approx \lambda_1\lambda_2.
	\end{equation}
Moreover, by \eqref{bdrya} and the property $(i)$, we have
	\begin{equation}\label{goodb}
		\lambda_2 \leq \xi_2 \leq \sigma(\xi_1)  
			\leq \sigma(\lambda_1).
	\end{equation}

By \eqref{contra}, we have $\frac12\xi_1< x^c_1 < \frac14\xi_1$ for $h>0$ small. 
Let's make the first change
	\begin{equation}\label{chan1}
	\begin{split}
		y_1 &= x_1, \\
		y_2 &= x_2 - \frac{x^c_2}{x^c_1}x_1
	\end{split}
	\end{equation}
such that $S_h[u]\subset\{x\in\mathbb{R}^2 : \xi_1<x_1<q_1, |x_2|<4\lambda_2\}$.
Note that such a change does not change the ratio $\frac{q_1}{|\xi_1|}$ in \eqref{rbalan}.
We then make the change
	\begin{equation}\label{chan2}
	\begin{split}
		z_1 &= y_1/\lambda_1, \\
		z_2 &= y_2/\lambda_2,
	\end{split}
	\end{equation}
and accordingly,
	\begin{equation}\label{chan3}
		u_h= u/h, \quad \mbox{where } h=u(\xi).
	\end{equation}
By the volume estimate \eqref{gooda}, the sub-level set $S_h(0)$ has a  good shape after changes \eqref{chan1} and \eqref{chan2}.
By Lemmas \ref{rela1} and \ref{luni} 
(in dimension two, the uniform density holds for any bounded convex domains, see Remark \ref{2duni}),
the centred sub-level set $S^c_h(0)$ also has a good shape after the change.
Hence,  
\beq\label{ulc}
 \|u_h\|_{L^\infty(B_1(0))}\leq C
\eeq
for a constant $C$ independent of $u$ and $h$.

Let $q_h, \xi_h$ be the corresponding points of $q, \xi$ after the above changes.
Assume that $q_h\rightarrow q_0=(q_{0,1}, q_{0,2})$, $u_h\rightarrow u_0$ as $h\rightarrow 0.$ 
By \eqref{contra} we have $q_{0,1}=0$, namely $q_0$ is on the $x_2$-axis. 
On the other hand, after the change \eqref{chan2},
the line $\{x_1=q_{h,1}\}$ is tangent to $S_1[u_h]$ at $q_{h}$, 
and thus $u_h(x)\geq 1$ for all $x\in\{x_1\geq q_{h,1}\}.$ 
Passing to the limit we have $u_0(x)\geq 1$  for $x\in \{x_1\geq 0\}$ 
and $u_0(0)=\lim_{h\to 0}u_h(0)=0$,  which is a contradiction by \eqref{ulc},
namely  $u_0$ is a limit of a sequence of locally uniformly bounded convex functions $u_h$, 
$u_0$ must be continuous. 
\end{proof}

\begin{corollary}\label{co41}
For $t>0$, denote
	\begin{equation*}
		\underline{u}(t) = \inf\{ u(t,x_2) : x_2 \geq \rho(t)\}.
	\end{equation*}
We have the asymptotic estimate
	\begin{equation}\label{key1}
		\underline{u}(t) \leq  Ct\sigma(t) \ \ \ \text{for $t>0$ small}.
	\end{equation}	
\end{corollary}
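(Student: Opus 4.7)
The plan is to relate $\underline u(t)$ to the rightmost abscissa of the sub-level set $S_h(0)$, then invoke Lemma \ref{lemban} together with the volume estimate \eqref{gV2}.

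Fix small $t>0$ and set $h := \underline u(t)$, with $h = u(t, x_2^*)$ for some $x_2^* \ge \rho(t)$. I first claim that the rightmost point $q$ from \eqref{qxi} satisfies $q_1 = t$. Since $(t, x_2^*) \in \p S_h(0)$ we have $q_1 \ge t$. For the reverse inequality, note that $\rho$ is convex, non-negative with $\rho(0)=0$, so $\rho$ is non-decreasing on $(0,r_0)$; hence for any $t'>t$ and $x_2 \ge \rho(t')$ one has $x_2 \ge \rho(t)$, so $(t, x_2)\in\ol\Om$, and property $(i)$ (extended up to $\pom$ by continuity) gives $u(t',x_2) \ge u(t, x_2) \ge \underline u(t) = h$. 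Hence $(t',x_2) \notin S_h(0)$ for any $t'>t$, forcing $q_1 \le t$, and so $q_1=t$.

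With $q_1 = t$, Lemma \ref{lemban} yields $|\xi_1| \le \delta_0^{-1} t$, so $\lambda_1 = q_1 - \xi_1 \le (1+\delta_0^{-1})\,t$. By \eqref{goodb} together with the monotonicity and evenness of $\sigma$, $\lambda_2 \le \sigma(|\xi_1|) \le \sigma(\lambda_1)$. Combining \eqref{gooda} with the $n=2$ case of \eqref{gV2} then yields
\[ \underline u(t) = h \ \approx\ \Vol\bigl(S_h(0)\bigr) \ \approx\ \lambda_1\lambda_2 \ \le\ \lambda_1\,\sigma(\lambda_1) \ \le\ C\,t\,\sigma(Ct). \]
In the typical regime $\sigma(r) = r^{1+\alpha}$ (remark d) after ($\text{\bf H}_2$)) the doubling bound $\sigma(Ct)\le C'\sigma(t)$ immediately recovers \eqref{key1}; more generally the factor $\sigma(Ct)/\sigma(t)$ can be absorbed into the constant $C$, or $\sigma$ replaced at the outset by a comparable doubling majorant without loss.

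The main obstacle I anticipate is the identification $q_1 = t$: although the intuition ``$\p S_h(0)$ has vertical tangent at the argmin'' is clean in the interior case $x_2^*>\rho(t)$, one must be careful in the boundary case $x_2^*=\rho(t)$, where the argument above relies essentially on $\rho$ being non-decreasing on $(0,r_0)$ and on $u_1\ge 0$ extending up to $\pom$. This geometric input is precisely what converts the one-dimensional quantity $\underline u(t)$ into the shape estimate on $S_h(0)$ that Lemma \ref{lemban} can exploit.
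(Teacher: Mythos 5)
Your proposal is correct and takes essentially the same route as the paper: fix $h=\underline u(t)$, identify $q_1=t$ for the rightmost point of $S_h(0)$, and then combine Lemma \ref{lemban} with \eqref{gooda}, \eqref{goodb}, and \eqref{gV2}. Your flagging of the implicit doubling behaviour of $\sigma$ needed to pass from $\sigma(Ct)$ to $C\sigma(t)$ is a fair observation that the paper glosses over, but as you say it is harmless in the model case $\sigma(t)=|t|^{1+\alpha}$.
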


\begin{proof}
By the strict convexity of $u$, the sub-level set $S_h(0)$ shrinks to the origin as $h\to0$.
Hence, for any $t>0$, there exists a unique $h>0$ such that $\{x_1=t\}$ is tangential to $\partial S_h(0)$ at the point $q=(q_1, q_2)$. 
This implies that $q_1=t$ and $\underline{u}(t)=h$.

From \eqref{rbalan}, 
	\begin{equation*}
		\lambda_1 = q_1 - \xi_1 \le C t, 
	\end{equation*}
for some constant $C$ independent of $h$. Then from \eqref{goodb} and \eqref{gooda} we have
	\begin{equation*}
		\lambda_2  \le C \sigma(t) \quad\mbox{and}\quad  \Vol(S_h(0)) \leq Ct\sigma(t).
	\end{equation*}	
Hence by \eqref{gV2} we obtain that
	\begin{equation*}
		\underline{u}(t) = h \leq  Ct\sigma(t).
	\end{equation*}
\
\vskip-36pt
\end{proof} 

\begin{corollary}
For $t>0$, denote
	\begin{equation*}
		\underline{\p_1u}(t) = \inf\{\p_1u(t,x_2) : x_2\geq\rho(t)\}.
	\end{equation*}
Then we have the asymptotic behaviour for $t>0$ small,
	\begin{equation}\label{key2}
		\underline{\p_1u}(t) \leq C\sigma(t).
	\end{equation}
\end{corollary}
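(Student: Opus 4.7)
The plan is to extract \eqref{key2} from a single observation: the function
$$\phi(s) := \underline{u}(s) = \inf\{u(s,x_2) : x_2 \ge \rho(s)\}$$
is convex in $s$. Given this, the desired bound on $\underline{u_1}(t)$ will follow by identifying $u_1$ at the interior minimiser as a subgradient of $\phi$ at $t$, and then using the bound on $\phi(2t)$ already provided by Corollary \ref{co41}.

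To establish convexity of $\phi$, note that the constraint set $\{(s,x_2) : x_2 \ge \rho(s)\}$ is convex because $\rho$ is convex, and $u$ is jointly convex, so the standard result on partial minimisation of a jointly convex function over a convex constraint yields that $\phi$ is convex, with $\phi\ge 0$ and $\phi(0)=0$. For $t>0$ small, let $q=(t,q_2)$ denote the minimiser realising $\phi(t)=u(t,q_2)$; from the discussion following \eqref{qxi} this $q$ is interior to $\Omega$, so $u_2(q)=0$. For any other small $s>0$ with corresponding minimiser $q_2(s)$, joint convexity of $u$ gives
\begin{align*}
\phi(s) = u(s,q_2(s)) &\ge u(t,q_2) + u_1(q)(s-t) + u_2(q)(q_2(s)-q_2)\\
&= \phi(t) + u_1(q)(s-t),
\end{align*}
which identifies $u_1(q)$ as an element of the subdifferential $\partial\phi(t)$.

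From the previous step, $\underline{u_1}(t) \le u_1(t,q_2) = u_1(q) \le \phi'_+(t)$. Convexity of $\phi$ together with $\phi\ge 0$ implies
$$\phi'_+(t) \le \frac{\phi(2t)-\phi(t)}{t} \le \frac{\phi(2t)}{t},$$
and Corollary \ref{co41} applied at $2t$ gives $\phi(2t)\le 2Ct\sigma(2t)$. Hence $\underline{u_1}(t)\le 2C\sigma(2t)$. Since $t\mapsto\sigma(2t)$ still satisfies the monotonicity and $o(t)$ properties of $(\text{\bf H}_2)$, we may absorb the factor of $2$ into $\sigma$ to obtain \eqref{key2}.

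The crucial insight is the convexity of $\phi$, which is not automatic for an infimum of a convex function over a moving constraint, but works here because the constraint $\{x_2 \ge \rho(s)\}$ is jointly convex in $(s,x_2)$. The only delicate point is ensuring that the minimiser $q_2$ is interior so that $u_2(q)=0$; this was already verified in \S\ref{ss41} using $u_{x_2}<0$ on $\partial\Omega\cap\{x_1>0\}$.
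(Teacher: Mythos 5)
Your proof is correct, but it takes a longer route than the paper's. The paper's argument is a one-line consequence of the convexity of $u$ itself: for any admissible $x_2$,
\begin{equation*}
u_1(t,x_2)\le \frac{u(2t,x_2)-u(t,x_2)}{t}\le \frac{u(2t,x_2)}{t},
\end{equation*}
and then one plugs in the interior minimiser $q_2$ of $u(2t,\cdot)$ (or, equivalently, takes the infimum over $x_2$) and invokes \eqref{key1} to get $\underline{u_1}(t)\le \underline{u}(2t)/t\le C\sigma(2t)$. Your version instead first establishes that $\phi=\underline u$ is convex via partial minimisation of a jointly convex function over the convex epigraph-type constraint $\{x_2\ge\rho(s)\}$, identifies $u_1(q)$ as a subgradient of $\phi$ at $t$ (using $u_2(q)=0$ at the interior minimiser), and then applies the standard convexity bound $\phi'_+(t)\le\phi(2t)/t$. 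Both routes hinge on the same two facts --- convexity and the interiority of the minimiser established after \eqref{qxi} --- and both land on $\underline{u}(2t)/t$, so the extra machinery (convexity of $\phi$, subdifferential of $\phi$) is not needed but does no harm. What your approach buys is a cleaner conceptual statement: it makes explicit that $\underline u$ is convex, which is a reusable structural fact; the paper's proof is shorter because it never names $\phi$. One small point: you absorb the factor $2$ and the argument $2t$ into $\sigma$ at the end, which is the same (harmless) doubling assumption the paper makes implicitly when it writes $C\sigma(t)$ rather than $C\sigma(2t)$.
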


\begin{proof}
This is a direct consequence of \eqref{key1}. 
In fact, by the convexity of $u$, for $t>0$ small
	\begin{equation*} 
		\p_1u(t, x_2) \leq \frac{u(2t, x_2) - u(t, x_2)}{t} \leq \frac{u(2t, x_2)}{t}.
	\end{equation*} 
Then taking the infimum in $x_2$, from \eqref{key1}, we obtain that
	\begin{equation*}
		\underline{\p_1u}(t) = \inf_{x_2} \p_1u(t, x_2) \leq \frac{\underline{u}(2t)}{t} \leq C\sigma(t). 
	\end{equation*}
\
\vskip-36pt
\end{proof}

\subsection{A blow-up sequence}\label{nss42}
Assume that $f>0$ is continuous.
The purpose of blow-up is such that $f$ becomes a positive constant in the limit.

From the proof of Lemma \ref{lemban},
the sub-level set $S_h[u](0)$ has a good shape under the following normalisation $\mathcal{T}$:
	\begin{equation}\label{norm1}
	\begin{split}
		y_1 &= x_1/\lambda_1, \quad \mbox{ with } \lambda_1 =  q_1-\xi_1, \\
		y_2 &= x_2/\lambda_2, \quad \mbox{ with } \lambda_2 = \rho(\xi_1).
	\end{split}
	\end{equation}
In fact, as shown in the proof of  Lemma \ref{lemban}, 
we have $\Vol(S_h[u])\approx \lambda_1\lambda_2$.
Hence $\mathcal{T}(S_h[u])\approx 1$. 
Also by the proof of  Lemma \ref{lemban}, 
$\mathcal{T}(S_h[u]) \subset [-1,1]\times[0,1]$. Hence $\mathcal{T}(S_h[u])$ has a good shape. 
	
Accordingly we make the change $u\to u_h$,  where
	\begin{equation}\label{norm2}
		u_h(x)=u(\mathcal{T}^{-1}x)/h. 
	\end{equation}
After the change, the domain $\Om$ is changed to $\Om_h$, 
and the boundary $\{x_2=\rho(x_1)\}$ is changed to $\big\{x_2=\rho_h(x_1)=\frac{1}{\lambda_2}\rho(\lambda_1x_1)\big\}$.
	By Lemma \ref{rela1} and Lemma \ref{luni} we also have 
	\begin{equation}\label{cengood}
	B_{\frac{1}{C}}(0)\subset \mathcal{T}(S^c_h[u](0))\subset B_C(0)
	\end{equation}
	 for some constant $C$ depending only on $n$, the constants $b$ in Lemma \ref{rela1} and $\delta_0$ in Lemma \ref{luni}, but independent of $h.$
In \eqref{cengood} the centred sub-level set $S^c_h$ can be replaced by the usual sub-level set $S_h$
if the centre of the concentric ball is properly chosen.

By \eqref{key1}, the limit $\lim_{t\to0}\frac{\underline{u}(t)}{t\sigma(t)} <\infty$.
Hence for any fixed small $\bar\eps > 0$ (we may fix $\bar\eps=1$), there is a sequence $t_j\to 0$ ($t_j>0$) such that 
	\begin{equation}\label{subs}
		\frac{\underline{u}(t)}{t\sigma(t)} \leq
		  (1+\bar\eps) \frac {\underline{u}(t_j)} {t_j\sigma(t_j)} \quad \forall\ t\in(0, t_j) . 
	\end{equation}
Denote $h_j=\underline{u}(t_j)$.
Since $\mathcal{T}(S_h[u])$ has a good shape,
for any $R>0$, $\Om_{h_j}\cap B_R(0)$ converges in Hausdorff distance to a limit. 
Hence by passing to a subsequence, 
$\Omega_{h_j}$ converges to a limit $ \Omega_0$ as $h_j\to 0$,
which is an unbounded convex domain in $\R^2$.

Next we show that $u_{h_j}$ sub-converges to a limit $u_0$ as $h_j\to 0$.
Indeed, by the geometric decay of sections (\cite[Lemma 2.2]{C96}), for any $k>0$, 
there exists a constant $M_k$ such that 
$$kS^c_h[u]\subset S^c_{M_kh}[u] \quad \text{ for $h>0$ small}. $$
Hence by the convexity of $u_{h_j}$ and the estimate \eqref{cengood},
$u_{h_j}$ is locally uniformly bounded, which implies the sub-convergence.

By the weak convergence of the Monge-Amp\`ere operator,
$u_0$ satisfies the equation
	\begin{equation}\label{eq00}
		\det\,D^2u_0 = c_1 \chi_{_{\Omega_0}} \quad\text{ in } \mathbb{R}^2 
	\end{equation}
for a positive constant $c_1$. There is no loss of {generality} in assuming that $c_1=1$.

By the change \eqref{norm1}, we have $0\in \pom_0$.
Let $\mathcal J$ be the projection of $\Om_0$ on the $x_1$-axis.
By Lemma \ref{lemban}, there is an interval $(0, r_0)\subset \mathcal J$.
Hence the {\it lower boundary} of $\Om_0\cap\{0<x_1<r_0\}$ 
can be represented by a convex function 
\beq \label{rho0}
x_2=\rho_0(x_1) 
\eeq
and   $\rho_0$ is the limit of $\rho_{h_j}$, passing to a subsequence if necessary.

\begin{remark}\label{r42}  
(i).
In Lemma \ref{lemban}, we proved that $|\xi_1|\le Cq_1$, 
but the possibility $|\xi_1|=o(q_1)$ as $h\to 0$ has not been ruled out.
{Hence, even when $\pom \in C^{1,1}$, the limit $\Om_0$ may be contained in the first quadrant, }
i.e. $\Om_0\subset\{x_1>0, x_2>0\}$. 
In this case, $\rho_0$ is defined in $\{x_1>0\}$. \\[3pt]
(ii).
No matter whether $\Om_0$ is contained in the first quadrant, we point out that 
the whole positive $x_2$ axis is contained in  $\bom_0$.
{To see this, notice that there exists a constant $\beta_0$ such that 
$\beta e_2\in\Omega$  $\forall\,\beta\in(0,\beta_0)$. 
By the transform $\mathcal{T}$ in \eqref{norm1}, 
we have $\beta e_2\in\Omega_h$ $\forall\,\beta\in(0,\frac{\beta_0}{\lambda_2})$. 
By the strict convexity of $u$, $\lambda_2\to0$ and $\beta_0/\lambda_2\to\infty$ as $h\to0$. 
Hence for any $R>0$, $Re_2\in\Omega_h$ provided $h>0$ is small enough. 
Passing to the limit, we have $Re_2\in\overline{\Om_0}$. }\\[3pt]
(iii). 
In Corollary \ref{c403} below, we will show that 
$\rho_0(t) \le C \sigma(t)$  for $t>0$ small. 
But $\rho_0$ may not be smooth.
In comparison, if $\pom\in C^2$ and is uniformly convex, 
then $\rho_0$ is a quadratic polynomial \cite{C96}.  
The lack of smoothness of $\rho_0$ in our case makes the problem much more complicated.
\end{remark}

By our choice of the sequences $t_j$ and $h_j=\underline{u}(t_j)$ in \eqref{subs}, 
the asymptotic behaviour \eqref{key1}  holds  for the limits $u_0$ and $\rho_0$.
Namely, we have the following estimates.

\begin{corollary}\label{u0est}
Denote 
$$\underline{u_0}(t)  = \inf\{ u_0(t,x_2) : x_2 \geq \rho_0(t)\}\ \ t>0.$$
We have 
	\begin{equation}\label{key5}
		\underline{u_0}(t)   \leq  Ct\sigma(t) \ \ \ \text{for $t>0$ small}.
	\end{equation}
\end{corollary}

\begin{proof}
Let $q, \xi$ be the points defined in \eqref{qxi} with $h=h_j$, 
and let $\lambda_1=q_1-\xi_1, \lambda_2=\rho(\xi_1)$ as in \eqref{norm1}.
 From \eqref{rbalan}, $t_j=q_1\approx \lambda_1.$
 
By \eqref{norm2}, $u_{h_j}(x_1, x_2)=\frac{u(\lambda_1x_1, \lambda_2x_2)}{\underline{u}(t_j)}.$
Hence, by \eqref{subs} 
\begin{align*}
\inf\{u_{h_j}(t, x_2): x_2\geq \rho_{h_j}(t)\}
&= \frac{\inf\{u(\lambda_1t, \lambda_2x_2): x_2\geq \rho(t)\}}{\underline{u}(t_j)}\\
&= \frac{\underline{u}(\lambda_1t)}{\underline{u}(t_j)} \\
 & \leq (1+\bar\eps) \frac{(\lambda_1t)\sigma(\lambda_1t)}{t_j\sigma(t_j)} \\
 & \leq Ct\sigma(t),
\end{align*}
where the constant $C>0$ is independent of $j$.  The above inequality implies that $\underline{u_{h_j}}(t) \leq Ct\sigma(t)$.
Passing to the limit, we obtain \eqref{key5}.
\end{proof}

\begin{corollary}\label{c403} 
We have 
\beq\label{n401}
\rho_0(t) \le C \sigma(t)   \ \ \ \text{for $t>0$ small}.
\eeq\end{corollary}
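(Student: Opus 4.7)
The plan is to write $\rho_0$ explicitly as a limit of the rescaled boundary profiles $\rho_{h_j}$ under the normalisation \eqref{norm1}--\eqref{norm2}, control both the numerator and the denominator in that expression using the balance property of Lemma \ref{lemban} together with the choice of the subsequence in \eqref{subs}, and then pass to the limit.

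\textbf{Step 1 (rescaled profile).} Under \eqref{norm1}, the boundary $\partial\Omega_{h_j}$ is the graph
\[
y_2 \;=\; \rho_{h_j}(y_1) \;:=\; \rho(\lambda_1 y_1)/\lambda_2,
\]
so $\rho_0(t)=\lim_{h_j\to 0}\rho(\lambda_1 t)/\lambda_2$ whenever this limit exists along the chosen sequence.

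\textbf{Step 2 (scaling factors).} From Lemma \ref{lemban} together with $q_1=t_j$ and $\xi_1<0$, one has $\lambda_1=q_1-\xi_1\approx t_j$, and the volume bound \eqref{gV2} combined with $\Vol(S_{h_j}(0))\approx\lambda_1\lambda_2$ gives $\lambda_2\approx h_j/t_j$. The defining property \eqref{subs} selects $t_j$ as an approximate maximiser of $\underline{u}(t)/(t\sigma(t))$ on $(0,t_j]$; together with the upper bound \eqref{key1} this yields $h_j\approx t_j\sigma(t_j)$, and hence the two-sided comparison
\[
\lambda_2 \;\approx\; \sigma(t_j) \;\approx\; \sigma(\lambda_1).
\]

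\textbf{Step 3 (bound on the numerator).} I would first extend the hypothesis \eqref{bdrya} to positive arguments, as remarked in point (a) after $(\text{\bf H}_2)$: for each $s>0$ small, apply the argument of Corollary \ref{co41} to $S_{\underline{u}(s)}(0)$ to show that its vertical thickness is $\lesssim\sigma(s)$, and deduce from the good shape of this sub-level set that the $x_2$-coordinate of its rightmost point, which dominates $\rho(s)$, satisfies $\rho(s)\le C\sigma(s)$ for all small $s>0$. Specialising this to $s=\lambda_1 t$ and invoking the power-law normalisation $\sigma(r)=r^{1+\alpha}$ from point (d) of the hypothesis gives
\[
\rho_{h_j}(t) \;=\; \frac{\rho(\lambda_1 t)}{\lambda_2} \;\le\; \frac{C(\lambda_1 t)^{1+\alpha}}{c\,\lambda_1^{1+\alpha}} \;=\; C\,t^{1+\alpha} \;=\; C\sigma(t).
\]
Passing $h_j\to 0$ along the subsequence and using the Hausdorff convergence $\partial\Omega_{h_j}\to\partial\Omega_0$ yields $\rho_0(t)\le C\sigma(t)$.

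\textbf{Main obstacle.} The critical point is the \emph{two-sided} matching $\lambda_2\approx\sigma(\lambda_1)$. Only the upper half follows from \eqref{key1}; the matching lower bound is exactly what the subsequence selection in \eqref{subs} is designed to supply. If $\underline{u}(t)/(t\sigma(t))\to 0$ along every sequence, then $\rho$ decays strictly faster than $\sigma$ and one would need to replace $\sigma$ by a sharper gauge; the choice of $\{t_j\}$ as near-maximisers of this ratio rules that out and guarantees that the rescaled profile $\rho_{h_j}$ remains non-degenerate in the limit, so that $\rho_0$ inherits the same asymptotic order as $\rho$ rather than collapsing to zero or blowing up.
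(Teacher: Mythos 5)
The route you take is genuinely different from the paper's, and it has a gap precisely where you flag a ``main obstacle'': the two-sided comparison $h_j\approx t_j\sigma(t_j)$ (equivalently $\lambda_2\approx\sigma(\lambda_1)$) is \emph{not} supplied by \eqref{subs}. What \eqref{subs} says is that $t_j$ nearly maximises the ratio $r(t)=\underline{u}(t)/(t\sigma(t))$ on $(0,t_j]$. This gives an upper bound on $r$ at smaller $t$, not a lower bound on $r(t_j)$. For example, if $r$ is monotone increasing and $r(t)\to0$ as $t\to0^+$ (allowed: take $\underline{u}(t)\sim t\sigma(t)\cdot t^\gamma$ with $\gamma>0$), then every decreasing sequence $t_j$ satisfies \eqref{subs} while $h_j/(t_j\sigma(t_j))\to 0$. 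In that regime $\lambda_2\approx h_j/t_j = o(\sigma(\lambda_1))$, and your Step 3 estimate $\rho_{h_j}(t)\le C(\lambda_1 t)^{1+\alpha}/\lambda_2$ blows up: the crude bound $\rho(s)\le C\sigma(s)$ from Step 3 is not fine enough to survive division by a $\lambda_2$ that is much smaller than $\sigma(\lambda_1)$.

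The paper circumvents this by \emph{not} tracking the raw boundary $\rho$ through the rescaling. Instead it proves that the decay estimate \eqref{key1} itself is inherited by $u_0$ — this is precisely the role of \eqref{subs}: from $\underline{u}(\lambda_1 t)\le(1+\varepsilon)h_j\cdot(\lambda_1 t)\sigma(\lambda_1 t)/(t_j\sigma(t_j))$ and $\lambda_1\approx t_j$ one gets $\underline{u_{h_j}}(t)\le Ct\sigma(t)$ uniformly in $j$, whether or not $r(t_j)\to0$. Then the corollary is established by running the sub-level-set argument of Corollary~\ref{co41} \emph{directly for} $u_0$: with $q,\xi$ as in \eqref{qxi} for $S_h[u_0]$ and $\beta$ defined by $u_0(\beta e_2)=h$, the balance property and $\Vol(S_h[u_0])\approx h$ give $\beta q_1\approx h$, \eqref{key1} for $u_0$ gives $h\le Cq_1\sigma(q_1)$, so $\beta\le C\sigma(q_1)$; and since $\partial_{x_1}u_0\ge0$ and $u_0(0,\cdot)$ is nondecreasing on $[0,\infty)$, one gets $\rho_0(q_1)\le q_2\le\beta\le C\sigma(q_1)$. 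This bypasses the need for a lower bound on $\lambda_2$ or a sharpened gauge for $\rho$. To repair your proposal you would have to replace Step~2's unproven matching lower bound with the observation that \eqref{key1} passes to $u_0$, and then argue on $u_0$'s own sub-level sets rather than on the rescaled boundary profiles.
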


\begin{proof}
For any given $h>0$ small, as in \eqref{qxi} we introduce two points $\xi$ and $q$ for the sub-level set $S_h[u_0]$.
Let $z=\beta e_2$ be the point on the $x_2$-axis such that $u_0(z)=h$.
By Corollary \ref{u0est},
 $$ h=u(q) \leq Cq_1\sigma(q_1). $$
By \eqref{gV2} and Remark \ref{invlim} we have 
$$ \frac12\beta q_1 \le \Vol(S_h[u_0]) \leq Ch. $$
Hence $\beta\le C \sigma(q_1)$. Noting that $\p_{x_1}u_0\ge 0$, 
we infer that $q_2\le \beta\le C\sigma(q_1)$, and so \eqref{n401} follows.
\end{proof}

Denote $\mathcal T^*=\frac{1}{h}(\mathcal T')^{-1}$,
the dual  affine transform for $v$, where $\mathcal T'$ is the transpose of $\mathcal T$ in \eqref{norm1}.
As in \eqref{norm1}, we denote $v_h(y)=\frac 1h v((\mathcal T^*)^{-1}y)$,  
and $\Omega^*_h=\mathcal T^*(\Om^*)$. 
The boundary $\pom^*_h$ near the origin  is given by $\{y_1=\rho^*_h(y_2)\}$.
Similarly,  $\Omega^*_{h_j}$ converges to an unbounded convex domain $\Omega_0^*$ 
and $v_{h_j}$ converges to a convex function $v_0$, locally uniformly.
Moreover, 	
\begin{equation}\label{eq000}
		\det\,D^2v_0 = c_2 \chi_{_{\Omega_0^*}} \quad\text{ in } \mathbb{R}^2 
	\end{equation}
	for a positive constant $c_2.$

\begin{remark}\label{r401}  
As pointed out  in Remark \ref {r42}, 
we need to deal with the case when $\Om^*_0$ is contained in the  first quadrant
(note that by Lemma \ref{lemban}, it is again the first quadrant, not the fourth quadrant).
By Remark \ref{r42}, the whole positive $y_1$-axis is contained in $\overline{\Om^*_0}$. 
 
Although $\Om_0$ is unbounded, by Remark \ref{invlim}, $u_0$ is locally strictly convex and $u_0\in C^{1,\delta}_{loc}(\overline\Om_0)$.
By the convexity of  $\Om_0$ and $\Om^*_0$, we have $u_0\in C^1(\R^2)$. 
 By the interior regularity for the Monge-Amp\`ere equation \eqref{eq00},
$u_0$ is $C^\infty$ smooth inside $\Om_0$.
\end{remark}

\subsection{Smooth approximation}\label{ns43}

In this subsection we shall construct a smooth approximation sequences $\{u_k\}$ converging to $u_0$ in a small neighbourhood of the origin. The $C^{2,\alpha}$-smoothness of $u_k$ is needed for deriving the contradiction in \S\ref{ns44}. 
Note that we just need the $C^{2,\alpha}$-smoothness of $u_k$ but not a uniform upper bound for the $C^{2,\alpha}$-norm of $u_k$. 

Let $V=B_r\cap \Omega_0^*$ for a small constant $r>0$, and let $U=Dv_0(V)$.
By the local strict convexity of $v_0$, 
there exists $r_0>0$ such that $B_{r_0}\cap\Omega_0\subset U$.
Let $U_k, U_k^*$ be $C^\infty$ smooth, bounded domains such that
$$ {\begin{split}
  a)\ \ & 0\in \p U_k, \ \  0\in \p U^*_k \ \ \forall\ k\ge 1,  \\
  b)\ \  & U_k\subset \{x_2>0\}, \ \ U^*_k\subset \{y_1>0\} \ \ \forall\ k\ge 1, \\
  c)\ \  & U_k \to U, \ \   U^*_k \to V, \ \ \text{as $k\to\infty$, in Hausdorff's sense}.
  \end{split}} $$
Moreover, we assume the lower part of the boundary $\p U_k\cap B_{r_0}$ is the graph of a smooth, uniformly convex function $\rho_k$ in direction $e_2$, that is
\beq\label{Gak}
\Gamma_k=: \{ x_2=\rho_k(x_1)\} \cap B_{r_0}, 
\eeq
where $\rho_k$ is defined on $\mathcal{J}_k$ that is the projection of $\p U_k\cap B_{r_0}$ on the $x_1$-axis. 
In our construction, $0\in\mathcal{J}_k$ is an interior point of $\mathcal{J}_k$ for all $k\geq1$. 
By Corollary \ref{c403}, $(0,\frac12r_0)\subset \mathcal{J}_k$. 
From the above conditions $a)-c)$ and uniform convexity, the function $\rho_k$ satisfies
\beq\label{rhok1}
{\begin{split}
 & \rho_k(0) = 0; \ \ (\rho_k)'(0) = 0; \\ 
 & (\rho_k)'(x_1) > 0 \ \ \text{for}\  x_1>0; \quad (\rho_k)'(x_1) < 0 \ \ \text{for}\  x_1< 0.
 \end{split}} 
\eeq 
Meanwhile, the left part of the boundary $\p U^*_k\cap B_{r_0}$ is the graph of a smooth, uniformly convex function $\rho^*_k$ in direction $e_1$, that is
\beq\label{Ga*k}
\Gamma^*_k=: \{ y_1=\rho^*_k(y_2)\} \cap B_{r_0},
\eeq
where $\rho^*_k$ is defined on $\mathcal{J}^*_k$ that is the projection of $\p U^*_k\cap B_{r_0}$ on the $y_2$-axis. 
In our construction, $0\in\mathcal{J}^*_k$ is an interior point of $\mathcal{J}^*_k$ for all $k\geq1$. The function $\rho^*_k$ satisfies
\beq\label{rhok111}
{\begin{split}
 & \rho^*_k(0) = 0; \ \ (\rho^*_k)'(0) = 0; \\ 
 & (\rho^*_k)'(y_2) > 0 \ \ \text{for}\  y_2>0; \quad (\rho^*_k)' (y_2) < 0\ \ \text{for}\  y_2<0.
 \end{split}} 
\eeq

Let $u_k$  be the potential function for the optimal transport  from $(U_k, 1)$ to $(U^*_k, g_k)$, 
where the density $g_k=\frac{|U_k|}{|U^*_k|}$ is a constant. 
Subtracting a constant we have $u_k(0)=0$.
Since $U^*_k$ is convex, we can extend  $u_k$ to $\mathbb{R}^2$ by
\beq\label{uke}
u_k(x):=\sup\{\ell(x) :\ \ell \text{ is affine, } \ell\leq u_k\ \text{in}\ U_k,\ \nabla\ell\in U_k^*\} \quad \mbox{for }x\in\mathbb{R}^2.
\eeq
Since $u_0(0)=0$, by the uniqueness of potential functions we have 
$u_k \to u_0$ uniformly in $B_{r_0}(0)$ for a different $r_0>0$ small, 
(in this subsection, the constant $r_0$ may change from line to line but they have a uniform positive lower bound independent of $k$).
By the uniqueness of optimal mapping $Du_0$, we also have $Du_k\to Du_0$ in $B_{r_0}(0)$.

By \eqref{cc1alpha} in Remark \ref{invlim}, $u_k$ are uniformly $C^{1,\delta}$ smooth in $\overline{U_k}\cap B_{r_0}(0)$,
and the extended $u_k\in C^1(B_{r_0}(0))$, by the convexity of $U^*_k$.
Let $v_k$ be the dual of $u_k$, namely the potential function for the optimal transport  from  $(U^*_k, g_k)$ to  $(U_k, 1)$.
Similarly to \eqref{uke}, let 
\beq\label{vke}
\hat v_k(y):=\sup\{\ell(y) :\ \ell \text{ is affine, } \ell\leq v_k\ \text{in}\ Du_k(B_{r_0}(0)),\ \nabla\ell\in B_{r_0}(0)\} \quad \mbox{for }y\in\mathbb{R}^2.
\eeq
Then  $\hat v_k=v_k$ in $\overline{U^*_k}\cap B_{r_0}(0)$, $\hat v_k$ is uniformly $C^{1,\delta}$ smooth in $\overline{U^*_k}\cap B_{r_0}(0)$, 
and $\hat v_k\in C^1(B_{r_0}(0))$, for a different $r_0$.

We have constructed the sequences of functions $\rho_k, \rho^*_k$ defined on $\mathcal J_k, \mathcal J^*_k$, respectively,
with the properties \eqref{rhok1} and  \eqref{rhok111} . Moreover,  $[0, r_0/2]\subset \mathcal J_k, \mathcal J^*_k$.
The next lemma shows that $u_k$ satisfies the obliqueness condition and is smooth on $\Gamma_k$ for all $k$.

\begin{lemma}\label{2dapp} 
\begin{itemize}
\item[$(i)$] For each $k\geq1$, we have
\beq\label{n408}
\nu_{k}(x)\cdot \nu_k^*(Du_k(x)) > 0\ \ \forall\,x\in\Gamma_k,
\eeq
where $\nu_k$ and $\nu_k^*$ are the unit inner normals of the domains 
$\{x_2 > \rho_k(x_1)\}\cap B_{r_0}$ and $\{y_1 >\rho^*_k(y_2)\}\cap B_{r_0}$, respectively. 

\item[$(ii)$] For each $k\geq1$, 
$u_k$ is smooth, locally uniformly convex, and $ \det\, D^2 u_k$ is a positive constant
 in $B_{r_0}(0)\cap\{ x_2\geq \rho_k(x_1)\}$ (up to the boundary $\Gamma_k$). 
\end{itemize}
\end{lemma} 

\begin{proof}
The proof is as follows (for a fixed $k$):
\begin{itemize}
\item [$(a)$]
$(ii)$ actually follows from $(i)$. In fact, if the obliqueness \eqref{n408} holds, we can apply the argument in \S\ref{s555} to obtain the smoothness in $(ii)$.   
So, it suffices to prove $(i)$.

\item [$(b)$] 
Suppose to the contrary that the obliqueness fails for $u_k$ at a point $p_0=(t_0, \rho_k(t_0))\in\Gamma_k$ with $t_0<r_0$.
We \emph{claim} that there is an interval $(\bar t, \bar t +\eps)$, where $\bar t\ge t_0$, 
such that the obliqueness fails for $u_k$ at $\bar p=:(\bar t, \rho_k(\bar t))$, 
but holds at all points on $\Gamma_k\cap\{\bar t<x_1<\bar t+\eps\}$ (Lemma \ref{1p}). 
Then by the regularity in \S\ref{s555}, $u_k$ is  smooth up to the boundary $\Gamma_k\cap\{\bar t<x_1<\bar t +\eps\}$.

\item [$(c)$] Regard $\bar p $ as the origin.
Applying the argument in \S\ref{ns44} to $u_k$ on  $\Gamma_k\cap\{\bar t<x_1<\bar t+\eps\}$, 
we reach a contradiction at the point $\bar p$.
It implies that the obliqueness must hold for $u_k$ at all points on $\Gamma_k$, namely $(i)$ is proved.
\end{itemize} 

To apply the argument in \S\ref{ns44} to $u_k$, 
we do not need to carry out the blow-up for $u_k$ in \S\ref{nss42}, 
as $\det\,D^2 u_k$ is already a positive constant. 
We also point out that
the $C^{2,\alpha}$ regularity proved in \S\ref{s555} is localised. 
That is, let $x_0\in\pom$ and $y_0=Du(x_0)\in\pom^*$. 
If $\pom$  and $\pom^*$ are $C^{1,1}$ and convex near $x_0$ and $y_0$, respectively,
then $u$ is $C^{2,\alpha}$ smooth near $x_0$. 

Therefore it remains to verify the claim in $(b)$, which will be proved in Lemma \ref{1p} below.
\end{proof}

\begin{remark}\label{2dCaff} 
Instead of using Lemma \ref{1p} below and the regularity in \S\ref{s555}, we can also use Caffarelli's localised boundary $C^{2,\alpha}$ regularity \cite{C96} to conclude that $u_k$ is smooth up to $\Gamma_k$. 
However, the proof in \cite{C96} is rather involved.
Here we use Lemma \ref{1p} to make our proof self-contained. This lemma will also be used in \S\ref{ss42} for high dimensions. 
\end{remark}

\begin{lemma}\label{1p}
Assume that the obliqueness fails for $u_k$ at a point $p_0=(t_0, \rho_k(t_0))\in\Gamma_k$ (for any fixed $k\geq1$).
Then there is a boundary point $\bar p=(\bar t, \rho_k(\bar  t))\in \Gamma_k$ with $\bar t\geq t_0$,
such that the obliqueness fails at $\bar  p$, 
but holds at all points $p\in\Gamma_k\cap\{\bar  t<x_1<\bar  t+\eps\}$
for a constant $\eps>0$. 
\end{lemma}

\begin{proof} 
Since the obliqueness fails at $p_0$, by a change of coordinates and subtracting a linear function to $u_k$, 
we can assume that $p_0=0$, $Du_k(0)=0$, and also
	$$0\in \p U_k, \quad U_k\subset\{x_2>0\}; \quad 0\in\p U^*_k,\quad U^*_k\subset\{y_1>0\}.$$
We still use $\rho_k, \rho^*_k$ to denote the boundary $\p U_k, \p U^*_k$ near $0$, which are smooth, uniformly convex near $0$.
Correspondingly, we have  $\rho_k(0)=0, \rho_k\ge 0$, and $\rho^*_k(0)=0, \rho^*_k\ge 0$.

For any boundary point $p=(t, \rho_k(t))\in \Gamma_k$, 
let $\zeta=\zeta(p)$ denote the unit tangential vector of $\Gamma_k$ at $p$.
Denote by $\tau_1$ and $\eta$ the tangential and normal vectors of $\partial S_h[u_k]$ at $p$, respectively,
where $h=u_k(p)$.
Let $\alpha$ be the angle between $\eta$ and $\nu,$ and let $\beta$ be the angle between $\nu^*$ and $\tau_1$  
(see Figure 3).
Then, 
\beq\label{n402}
	\nu(p)\cdot \nu^*(Du_k(p))=\cos\big({\Small\text{$ \frac{\pi}{2} $}} -\alpha-\beta\big)\ge {\Small\text{$ \frac 12 $}} ( \alpha+\beta) .
\eeq
By definition, the obliqueness holds at $p$ if and only if $\nu(p)\cdot \nu^*(Du_k(p))>0$.

We claim that $\beta>0$. Indeed,
since $\rho^*_k$ is smooth,  $(\rho^*_k)'(0)= 0$.
Noticing that  $p^*=Du_k(p)$, 
we see that $\overline{op^*}$ is parallel to $\eta$. 
Hence    by the uniform convexity of $\partial U^*_k$, 
\begin{equation*}
\beta = \arctan\left(\left|(\rho^{*}_k)'(p^*_2)\right|\right) - \arctan\big({\Small\text{$ \frac{p^*_1}{|p^*_2|} $}} \big) >0.
\end{equation*}

\begin{figure}[h]
\centering
\includegraphics[scale=0.33]{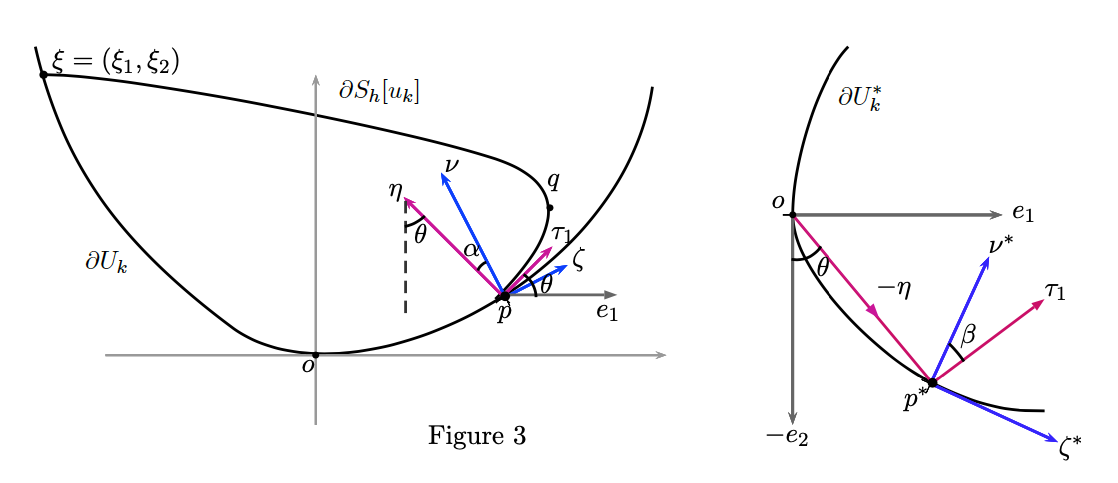}

{\Small ($\zeta, \zeta^*$ are tangential vectors, and $\nu$, $\nu^*$ are inner unit normals,  
of $\partial U_k$, $\partial U^*_k$ at $p$, $p^*=Du_k(p)$, respectively.  \\ 
$\eta$ and $\tau_1$ are the normal and tangential vectors  of $S_h[u_k]$ at $p$.  
Hence we have $\cos\alpha=\lan \eta, \nu\ran =\lan \tau_1, \zeta\ran$.)}
\end{figure}

Hence to prove Lemma \ref{1p}, we just need to prove that there is an interval of $\Gamma_k$ in which $\alpha\ge 0$.
From Figure 3, it is easy to see that $\alpha\ge 0$ if and only if $\p_\zeta u_k(p)\ge 0$.   
Since $u_k(0)=0$ and $u_k(x)>0$ $\forall\,x\in \Gamma_k\setminus\{0\}$,
there exists a point $\hat p=(\hat t, \rho_k(\hat t))\in\Gamma_k$ with $\hat t>0$ such that $\p_\zeta u_k(\hat p)> 0$.
Hence there exists $\eps>0$ such that 
\beq\label{n421}
\p_\zeta u_k(p)>0\ \ \text{for}\ p=(t, \rho_k(t))\in \Gamma_k, \ t\in (\hat t-\eps, \hat t+\eps).
\eeq
That means $\alpha\ge 0$ and so the obliqueness holds for $p=(t, \rho_k(t))\in \Gamma_k$ with  $t\in (\hat t-\eps, \hat t+\eps)$.

Let 
\beq\label{n422}
\bar t=\inf\left\{t :\ \text{the obliqueness holds at $p=(s, \rho_k(s))\in \Gamma_k$ for all  $s\in (t, t+\eps)$}\right\}.
\eeq
By the   $C^{1,\delta}$ regularity \cite {C92}, $\bar t\ge 0$ is well defined and 
the obliqueness holds for $t\in (\bar t, \bar t+\eps)$ but not at $\bar p=(\bar t, \rho_k(\bar t))$.
This finishes the proof. 
\end{proof}

\subsection{Contradiction}\label{ns44}
Now we derive a contradiction with the help of the approximation sequence $u_k$.
By our construction, $0\in\Gamma_k$ and $0\in \Gamma^*_k$,
where  $\Gamma_k$ and $\Gamma^*_k$ are the curves given in \eqref{Gak} and \eqref{Ga*k}.
First we point out that
\beq \label{1side}
\partial_{x_2}u_k(p)<0\ \ \ \text{for}\ \ p\in \Gamma_k\cap\{x_1>0\}.
\eeq
Indeed, for any given point $p\in\Gamma_k\cap\{x_1>0\}$, 
the inner normal $\nu$ of $U_k$ at $p$ lies in the second quadrant.
Let $\nu^*$ be the inner normal  of $U^*_k$ at $p^*=Du_k(p)\in \Gamma^*_k$.
By \eqref{n408} we have $\nu\cdot\nu^*> 0$.
By \eqref{rhok1}, it implies $p^*\in \Gamma^*_k\cap \{y_2< 0\}$.
Hence we have \eqref{1side}.

\begin{lemma}\label{u12}
We have 
	\begin{equation}\label{key3}
		\partial_{x_1x_2}u_k(t, \rho_k(t))<0 \ \ \ \text{ for }\ t\in (0, r_0).
	\end{equation}
\end{lemma}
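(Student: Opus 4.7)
The plan is to extract a closed-form expression for $u_{12}$ along $\partial\Omega$ by exploiting that $Du$ sends $\partial\Omega$ into $\partial\Omega^*$, and then show the sign hypotheses combine to force $u_{12}<0$. Since $u=u_k$ is smooth up to $\partial\Omega_k$ on a relative neighbourhood of $z_k$ (by Lemma \ref{2dapp}$(ii)$), and the image $Du(z_k)$ has nonpositive second coordinate, for $t>a$ close to $a$ the point $Du(t,\rho(t))$ lies on the branch $\{y_1=\rho^*(y_2):\ y_2\le 0\}$ of $\partial\Omega_k^*$, giving the identity
$$u_1(t,\rho(t))=\rho^*\bigl(u_2(t,\rho(t))\bigr).$$

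Differentiating in $t$ yields
$$u_{11}+u_{12}\rho'(t)=(\rho^*)'(u_2)\bigl[u_{12}+u_{22}\rho'(t)\bigr],$$
which I would rearrange to
$$u_{12}=\frac{(\rho^*)'(u_2)\,u_{22}\rho'(t)-u_{11}}{\rho'(t)-(\rho^*)'(u_2)}.$$

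The remaining task is a sign analysis. By the construction of the approximating domains in Lemma \ref{2dapp}, $\rho_k'(t)>0$ on $(2^{-k},r_0/2)$, so $\rho'(t)>0$ for $t>a\ge 2^{-k}$. Convexity of $\rho^*$ together with $\rho^*\ge 0$ and $\rho^*(0)=0$ forces $(\rho^*)'(s)\le 0$ for $s\le 0$, hence $(\rho^*)'(u_2)\le 0$. Smoothness and strict convexity of $u$ give $u_{11},u_{22}>0$. Consequently the denominator $\rho'(t)-(\rho^*)'(u_2)>0$, while the numerator is the sum of a nonpositive term $(\rho^*)'(u_2)\,u_{22}\rho'(t)$ and the strictly negative term $-u_{11}$, so it is strictly negative. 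This gives $u_{12}(t,\rho(t))<0$.

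The main obstacle I foresee is verifying that $u_2(t,\rho(t))\le 0$ throughout the interval $(a,a+\eps)$, so that the boundary relation uses the correct branch of $\rho^*$. This should follow from continuity of $u_2$ along the smooth portion of $\partial\Omega_k$ combined with the choice $u_2(z_k)\le 0$, provided we work on a sufficiently small interval and invoke Lemma \ref{1p} to guarantee obliqueness (and hence smoothness of $u_k$ up to $\partial\Omega_k$) just beyond $z_k$.
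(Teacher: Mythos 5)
Your differentiation of the boundary relation and the algebraic rearrangement are identical to the paper's, and your sign analysis is correct \emph{granted} $(\rho^*)'(u_2(t,\rho(t)))\le 0$. The gap is exactly the one you flag --- establishing that sign --- and your proposed fix does not close it. The choice of $a_k$ only guarantees $\partial_{x_2}u_k(z_k)\le 0$, not $<0$; if $u_2(z_k)=0$, continuity gives $u_2\approx 0$ on a neighbourhood but no one-sided sign, and any attempt to propagate the sign via monotonicity of $t\mapsto u_2(t,\rho(t))$ is circular, since $\tfrac{d}{dt}u_2(t,\rho(t))=u_{12}+u_{22}\rho'$ is controlled precisely by the quantity you are trying to estimate. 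Moreover Lemma \ref{leco} needs \eqref{key3} on a full interval $(a,r_0)$, not only in a shrinking neighbourhood of $z_k$, so a local continuity argument would not suffice even if it worked.

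The paper sidesteps this by reading the sign of $(\rho^*)'$ off the obliqueness, which you already invoke (via Lemma \ref{1p}) to get smoothness of $u_k$ up to the boundary on $\{a<x_1<a+\eps\}$. With $\nu\propto(-\rho',1)$ and $\nu^*\propto(1,-(\rho^*)')$, one has $\nu\cdot\nu^*\propto -\rho'-(\rho^*)'$, so $\nu\cdot\nu^*>0$ together with $\rho'>0$ forces $(\rho^*)'<-\rho'<0$ at the dual point directly (and, as a byproduct, $u_2<0$ there, confirming the image lies on the branch you assumed). Replacing your continuity argument by this one-line consequence of obliqueness --- valid wherever Lemma \ref{1p} gives obliqueness, i.e.\ on all of $(a,r_0)$ --- repairs the proof, after which your rearranged formula for $u_{12}$ delivers the conclusion exactly as in the paper.
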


\begin{proof}
By the boundary condition $Du_k(\partial U_k)=\p U_k^*$,
we have 
$$\partial_{_{x_1}}u_k(t, \rho_k(t))= \rho_k^*(\partial_{_{x_2}}u_k(t, \rho_k(t)) ). $$
Differentiating the above equation we have
$$
\partial_{_{x_1x_1}}u_{k} + \partial_{_{x_1x_2}}u_k\,\rho_k'=(\rho_k^*)' (\partial_{_{x_1x_2}}u_k+\partial_{_{x_2x_2}}u_{k}\,\rho_k'). 
$$
Namely,
\beq \label{aeq}
\partial_{_{x_1x_1}}u_{k} - \partial_{_{x_2x_2}}u_{k}\,\rho_k' \, (\rho_k^*)'=((\rho_k^*)'-\rho_k')\partial_{_{x_1x_2}}u_{k}.
\eeq
By the above approximation, $D^2u_k$ is  positive definite and
continuous on the boundary.
For $t>0$ small, by \eqref{1side} we have $\rho_k'> 0$ and $(\rho_k^*)'<0$. 
Hence, the LHS of \eqref{aeq} is always positive, 
and the coefficient on the RHS, $(\rho_k^*)'-\rho_k'<0$.
Therefore, we obtain \eqref{key3}.
\end{proof}

Introduce the function
	\begin{equation}\label{aux}
		w_k(x) := \partial_{_{x_1}}u_k+ u_k - x_1\partial_{_{x_1}}u_k.
	\end{equation}
By Lemma \ref{2dapp}, $\det\, D^2u_k$ is a positive constant.
Hence $w_k$ satisfies the equation
	\begin{equation}\label{aux1}
		M^{ij}D_{ij} w_k = 0
	\end{equation}
in $B_{r_0}\cap {U_k}$, where $\{M^{ij}\}$ is the cofactor matrix of $D^2u_k$.

\begin{corollary}\label{auco}
There exists a constant $\epsilon_0>0$ independent of $k$ such that for $t\in(0,\epsilon_0)$,
the function $w_k(t, \cdot)$ has an interior local minimum.
\end{corollary}

\begin{proof}
From Lemma \ref{u12} we have $\partial_{_{x_1x_2}}u_{k}(t, \rho_k(t)) < 0$ for $t\in (0, r_0).$ 
Hence, by \eqref{1side}
	\begin{equation}\label{infin}
		\partial_{_{x_2}}w_k(t, \rho_k(t)) = (1-t) \partial_{_{x_1x_2} }u_{k}(t, \rho_k(t)) + \partial_{_{x_2}}u_k(t, \rho_k(t)) < 0
	\end{equation}
for all $t\in (0, r_0)$.

On the other hand, by our assumption $U_k^*\subset\{y_1>0\}$, $\partial_{_{x_1}}u_k\geq0$. 
Hence by the strict convexity of $u_0$ and $u_k\rightarrow u_0$ uniformly in $B_{r_0},$ 
there exists a constant $\delta_0>0$ such that  
$$w_k(x)=u_k(x)+(1-x_1)\partial_{_{x_1}}u_k>u_k(x) \ge \delta_0$$
when $x\in \p B_{r_0}\cap U_k$.
Hence there is a small $\epsilon_0>0$ such that for any $t\in(0,\epsilon_0)$,
$w_k(t, \cdot)$ has a local minimum that is smaller than the boundary value $w_k(t, \rho_k(t))$.
\end{proof}

Hence we can define the following function
	\begin{equation}\label{aux2}
		\underline{w_k}(t) = \inf\{w_k(t, x_2) :\  x_2 > \rho_k(t),\  (t, x_2)\in U_k\}, \ \ t\in (0,\epsilon_0). 
	\end{equation}
By Corollary \ref{auco}, the infimum cannot be attained on $\p U_k\cap\{0<x_1<\epsilon_0\}$, and $\underline{w_k}$ is well defined for all  $t\in (0, \epsilon_0)$.

\begin{lemma}\label{leco}
$\underline{w_k}$ is concave for $t\in (0, \epsilon_0)$.
\end{lemma}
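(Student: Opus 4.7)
The plan is to exploit the homogeneous equation \eqref{aux1} together with the Monge-Amp\`ere identity $\det D^2 u = 1$ and compute the second derivative of $\underline w$ directly. Fix $t\in(a,r_0)$. By Corollary \ref{auco} the infimum defining $\underline w(t)$ is attained at an interior point $x_2^*(t)$, where $w_2(t,x_2^*(t))=0$ and $w_{22}(t,x_2^*(t))\ge 0$. Since $u=u_k$ is smooth up to the boundary in a neighbourhood of the relevant boundary arc (by Lemma \ref{2dapp}(ii)), the implicit function theorem yields, at every $t$ with $w_{22}(t,x_2^*(t))>0$, a smooth branch $t\mapsto x_2^*(t)$, and the envelope identity gives $\underline w'(t)=w_1(t,x_2^*(t))$ and
\[
\underline w''(t)\;=\;w_{11}\;-\;\frac{w_{12}^{\,2}}{w_{22}},
\]
all partial derivatives being evaluated at $(t,x_2^*(t))$.

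Next I would convert \eqref{aux1} into a pointwise inequality on $\underline w''$. Since $U^{11}=u_{22}$, $U^{22}=u_{11}$, $U^{12}=-u_{12}$, the equation $U^{ij}D_{ij}w=0$ reads $u_{22}w_{11}-2u_{12}w_{12}+u_{11}w_{22}=0$. Multiplying the displayed formula for $\underline w''$ by the positive quantity $u_{22}w_{22}$ and substituting $u_{22}w_{11}=2u_{12}w_{12}-u_{11}w_{22}$ gives
\[
u_{22}w_{22}\,\underline w''(t)\;=\;-\bigl(u_{22}w_{12}^{\,2}-2u_{12}w_{12}w_{22}+u_{11}w_{22}^{\,2}\bigr).
\]
The quadratic form in $(w_{12},w_{22})$ on the right-hand side has matrix $\begin{pmatrix}u_{22}&-u_{12}\\-u_{12}&u_{11}\end{pmatrix}$, whose determinant equals $\det D^2u=1>0$ and whose diagonal entries are positive, so it is positive definite. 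Hence $\underline w''(t)\le 0$ at every such $t$, which is the desired concavity.

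The main obstacle is the degenerate case in which $w_{22}(t,x_2^*(t))=0$, where the smooth branch $x_2^*(t)$ may fail to exist and the above division is illegitimate. I would handle this by reading the argument in the viscosity sense: if a $C^2$ function $\varphi(t)$ touches $\underline w$ from below at $t_0$, then $\psi(t,x_2):=\varphi(t)$ touches $w$ from below at $(t_0,x_2^*(t_0))$, so the matrix inequality $D^2w-D^2\psi\ge 0$ forces $w_{22}\ge 0$, $w_{12}=0$ whenever $w_{22}=0$, and $w_{11}\ge\varphi''$. In the nondegenerate case $w_{22}>0$ the calculation of the previous paragraph gives $\varphi''\le w_{11}-w_{12}^2/w_{22}\le 0$, while in the degenerate case $w_{12}=w_{22}=0$ the equation $u_{22}w_{11}=2u_{12}w_{12}-u_{11}w_{22}=0$ together with $u_{22}>0$ forces $w_{11}=0$, so again $\varphi''\le 0$. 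Thus $\underline w$ is concave on $(a,r_0)$ in the viscosity sense, and therefore in the classical sense.
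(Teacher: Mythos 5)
Your proof is correct, but it takes a genuinely different route from the paper's. The paper argues by contradiction with the maximum principle: if $\underline{w}$ is not concave, pick an affine $L$ with $\{\underline{w}<L\}$ containing an interval $(r_1,r_2)\Subset(a,r_0)$, extend $L$ to $\hat L(t,s)=L(t)$, and observe (using Corollary \ref{auco} to perturb $\hat L$ by $-\varepsilon$) that $\{x\in\Om: x_1\in(r_1,r_2),\ w(x)<\hat L(x)\}\Subset\Om$; since $U^{ij}D_{ij}(w-\hat L)=0$, the maximum principle forbids a negative interior minimum, a contradiction. This avoids any discussion of the minimizer $x_2^*(t)$ and the degenerate case $w_{22}=0$ entirely. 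Your proof instead differentiates the envelope: it derives $\underline{w}''=w_{11}-w_{12}^2/w_{22}$, plugs the relation $u_{22}w_{11}=2u_{12}w_{12}-u_{11}w_{22}$ from \eqref{aux1} into this expression, and identifies the result as minus a positive-definite quadratic form (the cofactor matrix of $D^2u$) evaluated at $(w_{12},w_{22})$; you then repair the possible non-smoothness of $t\mapsto x_2^*(t)$ by reformulating in the viscosity sense, where the degenerate case $w_{22}=0$ forces $w_{12}=0$ and then $w_{11}=0$. Both proofs use the same two inputs — the strict ellipticity of the linearised operator and the interior location of the minimizer from Corollary \ref{auco} — but the paper's version is a one-step global argument while yours is a local second-order computation; the price you pay is the viscosity bookkeeping, and what you gain is the explicit identity $u_{22}w_{22}\,\underline{w}''=-\bigl(u_{22}w_{12}^2-2u_{12}w_{12}w_{22}+u_{11}w_{22}^2\bigr)$, which makes the mechanism more transparent. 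One caveat worth flagging if you polish this: your opening paragraph assumes a smooth branch $x_2^*(t)$ exists where $w_{22}>0$; since you then abandon that and run the whole argument in the viscosity setting anyway, the first paragraph should be presented as motivation only, with the viscosity computation as the actual proof.
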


\begin{proof}
If $\underline{w_k}$ is not concave, 
then there exist constants $0<r_1<r_2<\epsilon_0$  and
 an affine function $L(t)$ such that 
$\underline{w_k}(r_i)= L(r_i)$ for $i=1,2$,  and the set 
  $\{t\in (r_1, r_2) : w_k(t)< L(t)\}\ne\emptyset$.
Extend $L$ to an affine function $\hat L$ defined in $\R^2,$ such that $\hat L(t, s)=L(t).$
Denote $$D_\eps =\{x\in U_k : x_1\in(r_1, r_2),\text{ and } w_k(x)< \hat L(x)-\eps\}.$$
By our definition of $\underline {w_k}$ and Corollary \ref{auco}, 
we can choose $\eps>0$ such that 
\beq\label{Deps}
\emptyset\ne D_\eps \Subset U_k.
\eeq
Indeed, by our choice of $\hat L$, $D_{\eps\,|\,\eps=0}\ne\emptyset$.
Let $\eps_1=\sup\{\eps : D_{\eps}\ne\emptyset\}$.
Then if $\eps<\eps_1$ and sufficiently close to $\eps_1$, 
we have $D_\eps \ne \emptyset$. 
By Corollary \ref{auco}, the infimum in \eqref{aux2} is attained at an interior point.
Hence we also have $D_\eps \Subset U_k$.

By equation \eqref{aux1} and the boundary condition $w_k=\hat L-\eps$ on $\partial D_\eps$,
we apply the maximum principle to $w_k$ in $D_\eps$ and conclude that  $w_k= \hat L-\eps$ in $D_\eps$.
However,  by our definition of $D_\eps$, we have $w_k<\hat L-\eps$ in $D_\eps$.
We reach a contradiction. 
\end{proof}

Note that for any fixed $t \in (0, \epsilon_0)$,  the minimum point in Corollary \ref{auco} may not  be unique. 
In this case, the domain $D_\eps$ in the above proof may contain more than one connected component.
But each component is compactly contained in $U_k$. Hence we can still use the maximum principle.

We have now established Lemmas \ref{u12} and \ref{leco} for the
approximation sequence $u_k.$  Denote
$$w_0= \p_{x_1}u_0 + u_0 - x_1\p_{x_1}u_0=\lim_{k\to\infty} w_k . $$
Let
$$\underline{w_0}(t) = \inf\{w_0(t, x_2) :\  x_2 > \rho_0(t),\  (t, x_2)\in \Om_0\}, \ \ t\in (0,\epsilon_0). $$   
From \eqref{key5}, we have
$\underline{w_0}(t) \to 0$ as $ t\to 0$. More precisely, 
$$\underline{w_0}(t) \leq C\sigma(2t)\ \ \text{ for $t>0$ small}. $$
To see this, let $q=(2t, q_2)$ such that $\underline{u_0}(2t)=u_0(q)$,
where $\underline{u_0}$ is defined in Corollary \ref{u0est}.
By \eqref{key5}, we have $u_0(q)\leq Ct\sigma(2t)$. 
Let $\hat q=(t, q_2)$. 
Since $\partial_{x_1}u_0\geq 0$ (i.e. $\Om^*_0\subset\{y_1>0\}$), we have
$$u_0(\hat q)\leq u_0(q)\leq Ct\sigma(2t). $$
By the convexity of $u_0$, one also has
$$\partial_{x_1}u_0(\hat q)\leq \frac{u_0(q)-u_0(\hat q)}{t} \leq C\sigma(2t) . $$ 
Hence,
	\begin{equation}\label{2dbb}
	\begin{split}
    \underline{w_0}(t) 
		& = \inf w_0(t,\cdot)  \leq w_0(\hat q) \\
		&= (1-t)\partial_{x_1}u_0(\hat q) + u_0(\hat q) \leq C\sigma(2t).
	\end{split}
	\end{equation}
Recall that $\sigma(t)=|t|^{1+\gamma}$. 
Hence by the concavity of $\underline{w_0}$ (Lemma \ref{leco}, taking the limit $k\rightarrow \infty$),
we conclude that $\underline{w_0}(t)\leq0$ for all $t\in(0, \epsilon_0)$. 
On the other hand, since $\partial_{x_1}u_0\geq 0$, by the strictly convexity of $u_0$, we have
$$ w_0(x)   =u_0(x) +(1-x_1)\p_{x_1}u_0(x) \ge u_0 (x)>0 \ \ \text{if}\ x\in\Om_0, \ x_1\in (0, \epsilon_0) .$$
It implies that $\underline{w_0}(t) >0 $ when $t>0$. 
We reach a contradiction. 
Therefore the uniform obliqueness in dimension two is proved.
\qed

\section{Uniform obliqueness in high dimensions}\label{ss42}
 
In this section we prove the uniform obliqueness in high dimensions. 
Suppose the domains $\Omega$ and $\Omega^*$ are bounded, convex, with $C^{1,1}$ boundaries, and $f\in C^{0}(\overline{\Omega}).$
The proof of obliqueness uses the ideas in \S\ref{s4}, and also the following:

\begin{itemize}
\item [(i)] 
Suppose the obliqueness fails at the point $0\in \partial \Omega$ and $0=Du(0)\in\partial\Omega^*$.
To understand the geometry of the sub-level set $S_h[u](0)$,
we prove that $S_h[u](0)$ is contained in a cuboid $Q$ with volume $|Q|\le C|S_h[u](0)|$.
Then by a rescaling of the coordinates such that $Q$ becomes a cube, 
$S_h[u](0)$ changes accordingly to a convex set with good shape. 
In particular, we show that the boundary $\partial\Omega$ becomes flat in directions orthogonal to the inner normals $\nu$ and $\nu^*$  as $h\rightarrow 0$.
This property enables us to employ the techniques in \S4.  
 
\item [(ii)] As in dimension two, we need to construct a smooth approximation sequence to derive the contradiction. 
The construction in high dimensions is more complicated. 
\end{itemize}

Similarly as in Remark \ref{R4.1} (iii), 
the constants in this section depends on $n, \lambda, \Om, \Om^*$ 
(diameters  and $C^{1,1}$ norm of $\Om, \Om^*$). 
They also depend on $k$ in the argument on the approximation sequence $\{u_k\}$,
but are independent of $h$ and $u$ for small $h>0$.
The continuity of $f$ is used only in the blow-up process.
The tangential $C^{1,\alpha}$ regularity of \S3 is used only in \S5.1 for $u$ and in the approximation sequence $\{u_k\}$,
where the domains are $C^{1,1}$ smooth. 
We do not need the tangential $C^{1,\alpha}$ regularity for the limit $u_0$.

\subsection{The limit profile} \label{sn51}

To prove the uniform obliqueness, by the $C^{1,\delta}$ regularity \cite {C92},
we may suppose to the contrary that $0\in\pom$, $u(0)=0$, $Du(0)=0\in\p\Om^*$, and
locally
\beq\label{hdom} 
{\begin{aligned}
	\{x_n > C|x'|^2\} \subset \Omega \subset \{x_n > 0\}, & \quad\mbox{ where } x'=(x_1,\cdots,x_{n-1}), \\
	\{ y_1> C|\tilde y|^2 \} \subset \Omega^* \subset \{y_1>0\}, & \quad\mbox{ where } \tilde y=(y_2,\cdots,y_n).
	\end{aligned}}
\eeq 

Corresponding to properties $(i)$ and $(ii)$ in \S\ref{ss41},  similarly we have
\begin{enumerate}[$(i)$]
\item $u_1>0$ in $\Omega$ and $v_n>0$ in $\Om^*$;

\item if $x\in S_h[u]$, then $x-te_1\in S_h[u]\ $ $\forall\ t>0$, provided $x-te_1\in\Omega$.
\end{enumerate}

First we prove a lemma that strengthens Lemma \ref{lemban}.

\begin{lemma}\label{highl}
For any given point $p\in \p S_h[u]\cap \Om$, 
let $\mathcal H$ be the tangential plane of $S_h[u]$ at $p$.
Assume $\mathcal H=\{x\in\R^n:\ x\cdot \gamma= a\}$ for a unit vector $\gamma$,
where $a$ is a positive constant.
Then 
\beq\label{4f6}
x\cdot \gamma\ge -Ca \quad \ \forall\ x\in S_h[u],
\eeq
where $C>0$ is a constant independent of $h$ ($h>0$ small) and $u$.
\end{lemma}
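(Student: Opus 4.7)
The plan is to argue by contradiction, adapting the two-dimensional strategy of Lemma~\ref{lemban}. Suppose \eqref{4f6} fails. Then there exist $h_j\to 0$, corresponding solutions $u_j$, interior boundary points $p_j\in \partial S_{h_j}[u_j]\cap\Omega$ with unit outer normals $\gamma_j$ and offsets $a_j=p_j\cdot\gamma_j>0$, and points $\xi_j\in S_{h_j}[u_j]$ with $b_j:=-\xi_j\cdot\gamma_j$ satisfying $b_j/a_j\to\infty$. By compactness of $S^{n-1}$, after passing to a subsequence $\gamma_j\to\gamma_\infty$.

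For each $j$, Lemma~\ref{Lemma2.2} furnishes a linear transformation $T_j$ with $B_1(0)\subset T_j(S^c_{h_j}[u_j](0))\subset B_n(0)$. Combined with Lemma~\ref{rela1} and the uniform density Lemma~\ref{luni}, the image $T_j(S_{h_j}[u_j])$ is of good shape, sandwiched between two balls of comparable radii with constants independent of $j$. Consider the rescaled convex functions $\tilde u_j(\tilde x):=h_j^{-1}u_j(T_j^{-1}\tilde x)$, which are uniformly bounded on any fixed ball about the origin (via the doubling of the Monge-Amp\`ere measure and Lemma~\ref{Lemma2.2} applied to $S^c_{Ch_j}$ for a large constant $C$). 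They satisfy $\tilde u_j(0)=0$ and, by convexity of $u_j$ together with the tangent-plane property at $p_j$,
\beqs
\tilde u_j(\tilde x)\ge 1 \quad \text{on } \{\tilde x\cdot\tilde\gamma_j\ge\tilde a_j\},
\eeqs
where $\tilde\gamma_j:=T_j^{-T}\gamma_j/|T_j^{-T}\gamma_j|$ and $\tilde a_j:=(T_jp_j)\cdot\tilde\gamma_j$.

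The identity $(T_jx)\cdot(T_j^{-T}e)=x\cdot e$ shows that the ratio is preserved: with $\tilde b_j:=-\min_{\tilde x\in T_j(S_{h_j}[u_j])}\tilde x\cdot\tilde\gamma_j$, one has $\tilde a_j/\tilde b_j=a_j/b_j\to 0$. Since $T_j(S_{h_j}[u_j])$ has good shape, its widths in every direction are bounded above and below by universal constants, so $\tilde a_j+\tilde b_j$ is uniformly bounded and bounded away from zero; combined with the vanishing ratio this forces $\tilde a_j\to 0$ while $\tilde b_j$ remains bounded below. Extracting a locally uniformly convergent subsequence by Arzel\`a--Ascoli (bounded convex functions being locally Lipschitz), the limit $\tilde u_\infty$ is a bounded convex function with $\tilde u_\infty(0)=0$; passing the pointwise inequality to the limit, using $\tilde a_j\to 0$ and $\tilde\gamma_j\to\tilde\gamma_\infty$, yields
\beqs
\tilde u_\infty(\tilde x)\ge 1 \quad \text{on } \{\tilde x\cdot\tilde\gamma_\infty\ge 0\}.
\eeqs
Approaching the origin along the ray $\epsilon\,\tilde\gamma_\infty$ as $\epsilon\to 0^+$, continuity of $\tilde u_\infty$ at the origin gives $0=\tilde u_\infty(0)\ge 1$, a contradiction.

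The main obstacle will be ensuring uniform boundedness and equicontinuity of $\{\tilde u_j\}$ on a ball about $0$ independent of $j$, so that a nontrivial limit with the required pointwise properties can be extracted. This rests on combining the shape control from Lemma~\ref{Lemma2.2} for centred sub-level sets with the uniform density (Lemma~\ref{luni}) and the equivalence Lemma~\ref{rela1}; together these guarantee that the normalised sub-level set $T_j(S_{h_j}[u_j])$ has universally good shape and that $\tilde u_j$ admits a uniform modulus of continuity near the origin.
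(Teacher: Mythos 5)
Your proposal is correct and follows essentially the same contradiction argument as the paper: normalize the sub-level set by a linear transformation fixing the origin (which preserves the ratio of the distances from the origin to the two tangent planes), rescale the solution, pass to a convex limit, and contradict the continuity of that limit at the origin, where it equals zero but must be $\ge 1$ on a half-space whose boundary passes through the origin. The only cosmetic differences are that you normalize via $S^c_{h_j}[u]$ and Lemma~\ref{Lemma2.2} (then transfer good shape to $S_{h_j}[u]$ through Lemmas~\ref{rela1} and~\ref{luni}), whereas the paper normalizes $S_h[u]$ directly, and that you spell out the linearity/ratio-preservation more explicitly than the paper does.
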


\begin{proof} Denote $b=\inf\{x\cdot \gamma:\  x\in S_h[u]\}$ and denote
$\mathcal H_1=\{x\in\R^n:\ x\cdot \gamma= b\}$.
Suppose to the contrary that the ratio $\frac{a}{|b|}\to0$ as $h\to0$.
Let $\mathcal{A}_h$ be an affine transformation  
such that $\mathcal{A}_h(S_{h}[u])\sim B_1(z)$ for some point $z$.
Note that the transform does not change the ratio $\frac{a}{|b|}$. 

Accordingly let $u_h(x) = u(\mathcal{A}_h^{-1}x)/h$. By Caffarelli's geometric decay estimate, 
similarly as in \eqref{ulc}, $u_h$ is locally uniformly bounded and sub-converges to a limit $u_0$ as $h\to0$, 
and $u_0(0)=\lim_{h\to0} u_h(0)=0$. 

On the other hand, by passing to a subsequence, we have
	\begin{equation*}
		\mathcal{A}_h\left(\mathcal H\right) \to \mathcal H^*,\qquad
		 \mathcal{A}_h\left(\mathcal H_1\right) \to \mathcal H_1^*
	\end{equation*}
as $h\to 0$. Observe that
	\begin{equation*}
		\frac{\dist(0, \mathcal H^*)}{\dist(0, \mathcal H^*_1)} = \lim_{h\to0}\frac{a}{|b|}=0.
	\end{equation*}
Hence $\mathcal H^*$ passes through the origin. 
It implies that $u_0\ge 1$ on one side of $\mathcal H^*$, which is a contradiction since $u_0$ is continuous and $u_0(0)=0$.  
\end{proof}

Similarly as in \eqref{qxi}, let $q, \xi \in \p S_h(0)$ such that 
	\begin{align}\label{dqq}
		& q_1=\langle q, e_1 \rangle = \sup\{\langle x, e_1\rangle : x\in S_h(0)\}, \\
		& \xi_1 = \langle \xi, e_1 \rangle = \inf\{\langle x, e_1\rangle : x\in S_h(0)\}.
	\end{align}
Obviously $q_1>0$ and $\xi_1 <0$. 
We point out that $q$ is an interior point of $\Om$.
To see this, let $\ell^*=:\{te_1:\ t\in (0,t_0)\}$ be a line segment in  $\Om^*$. 
Let $\ell=(Du)^{-1}(\ell^*)\subset \Om$ be the pre-image of $\ell^*$.
Then $q$ is the intersection of $\ell$ with $\p S_h[u](0)$.
From Lemma \ref{highl} we have

\begin{corollary}\label{rl42} 
We have
	\begin{equation}\label{banban}
		q_1 \geq \delta_0|\xi_1|
	\end{equation}
for some constant $\delta_0>0$ independent of $h$ and $u$. 
\end{corollary}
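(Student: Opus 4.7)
\textbf{Proof proposal for Corollary \ref{rl42}.} The corollary is a direct application of Lemma \ref{highl} at the extremal point $q$ from \eqref{dqq}. The analytic content is already captured by that lemma; what remains is to (a) justify that $q$ can be treated as an interior point of $\Omega$ for $h$ small, and (b) identify the tangent hyperplane of $S_h(0)$ at $q$ explicitly so that Lemma \ref{highl} delivers \eqref{banban} at once.

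First I would verify that $q$ lies in the open set $\Omega$ for all $h>0$ small enough. Since $\partial\Omega$ is $C^{1,1}$ and, in view of \eqref{hdom}, tangent to $\{x_n=0\}$ at the origin with inner normal $\nu(0)=e_n$, continuity of $\nu$ gives $\nu(q)\cdot e_1\to 0$ as $h\to 0$. If $q$ belonged to $\partial\Omega$, the maximality of $x_1$ on $S_h\subset\bar\Omega$ would force $e_1$ to be an outward direction to $\Omega$ at $q$, i.e.\ $\nu(q)\cdot e_1\le 0$. Combined with property $(i)$ (namely $u_1>0$ in $\Omega$), one can slightly perturb $q$ into $\Omega$ along $-e_n$ and then along $e_1$ to stay in $\{u<h\}$ while strictly increasing the $x_1$-coordinate, contradicting the choice of $q$. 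Hence $q$ is an interior point of $\Omega$ once $h$ is sufficiently small.

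Second, with $q\in\Omega$ an interior point of $\partial S_h(0)$, the tangent hyperplane $\mathcal H$ to $S_h(0)$ at $q$ has outward unit normal $e_1$, because $q$ maximizes $x\cdot e_1$ over $S_h(0)$. In the notation of Lemma \ref{highl} we therefore take $\gamma=e_1$ and $a=q\cdot e_1=q_1>0$. The lemma immediately yields
\begin{equation*}
x\cdot e_1\ge -C\,q_1\qquad\text{for all }x\in S_h(0),
\end{equation*}
with a constant $C$ independent of $h$ and $u$. Specializing to $x=\xi$ gives $\xi_1\ge -C\,q_1$, i.e.\ $q_1\ge C^{-1}|\xi_1|$, and one sets $\delta_0:=1/C$.

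The only mildly delicate step in the plan is the interiority of $q$; beyond that the argument is essentially a one-line invocation of Lemma \ref{highl}. I do not foresee any further obstacles, since the uniform density (Lemma \ref{luni}) and the resulting eccentricity bounds on sub-level sets are already built into the statement and proof of Lemma \ref{highl}.
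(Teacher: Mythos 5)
Your core argument is exactly the paper's: the corollary is an immediate invocation of Lemma~\ref{highl} at the extremal point $q$, with $\gamma=e_1$ and $a=q_1>0$, since $\{x_1=q_1\}$ is a supporting hyperplane of $S_h(0)$ at $q$, whence $\xi_1\ge -Cq_1$. The paper gives no further detail beyond ``From Lemma~\ref{highl} we have,'' so your reading of the logic is faithful.

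The auxiliary paragraph establishing interiority of $q$, however, has genuine problems. The claim that ``maximality of $x_1$ on $S_h$ would force $e_1$ to be an outward direction to $\Omega$ at $q$'' does not follow as stated: when $u(q)=h$, the set $S_h$ can be cut off near $q$ by the level surface $\{u=h\}$ rather than by $\partial\Omega$, and this is perfectly compatible with $\nu(q)\cdot e_1>0$. Moreover, under the normalization \eqref{hdom} one has $\Omega\subset\{x_n>0\}$, so perturbing a point of $\partial\Omega$ along $-e_n$ \emph{exits} $\Omega$; you would need $+e_n$, and then the sign of $u_n$ near $q$ enters and must be controlled. The paper addresses this point explicitly only in the two-dimensional case (\S\ref{ss41}), by noting $u_{x_2}(p)<0$ at boundary points $p$ with $p_1>0$ so that the rightmost point of $S_h$ must be interior; the higher-dimensional analogue would be $u_{x_n}<0$ along $\partial\Omega\cap\{x_1>0\}$. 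That said, the issue is inessential for your purposes: the blow-up proof of Lemma~\ref{highl} uses only that $\mathcal H=\{x\cdot\gamma=a\}$ is a supporting hyperplane of $S_h[u]$ with $a>0$, which holds at $q$ regardless of whether $q\in\Omega$ or $q\in\partial\Omega$; the ``interior point'' hypothesis in the lemma's statement just guarantees a well-defined tangent hyperplane and can be bypassed by taking $\mathcal H$ to be the support hyperplane normal to $\gamma$. So your conclusion is sound, and the fix is either to repair the interiority step along the lines of the 2D argument or to note that it can be dispensed with.
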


{Having obtained the balance property \eqref{banban}, 
one would expect a decay estimate like Corollary \ref{co41} as in dimension two. 
But to obtain the decay estimate in high dimensions, one needs a bound of $S_h[u]$ along $x_2,\cdots,x_{n-1}$ directions.}

Denote $S^c_{h;1}[u]=S^c_h[u]\cap\{x_1=0\}$
and denote $B'_r(0)$ the ball of radius $r$ in $\R^{n-2}=\text{span}(e_2, \cdots, e_{n-1})$.
We have the following estimates for $S^c_h[u]$ in $x_2,\cdots,x_{n-1}$ directions.

\begin{lemma}\label{lem4.6}
For any given $\eps>0$ small, we have 
\beq\label{n003}
B'_{C^{-1} h^{1/2+\eps}}(0)\subset S^c_{h;1}[u]\cap \{x_n=0\}\subset B'_{ C h^{1/2-\eps}}(0).
\eeq
In particular, for any $i=2,\cdots,n-1$ and any $x\in S^c_h[u]$,
\beq\label{n100}
|x_i|=|x\cdot e_i| \le Ch^{\frac 12-\eps}, 
\eeq
provided $h>0$ is sufficiently small, where the constant $C=C(\eps)$ is independent of $h$ and $u$.
\end{lemma}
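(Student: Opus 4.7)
The plan is to combine the tangential $C^{1,\alpha}$ regularity of Lemma~\ref{treg} with the duality pairing of Corollary~\ref{cdual-1}, applied simultaneously to $u$ and to its Legendre transform $v$. By \eqref{hdom} the hypotheses of Lemma~\ref{treg} are met at $0\in\pom$, and after relabelling coordinates (so that $e_1$ plays the role of the inner normal) they are also met for $v$ at $0\in\p\Om^*$. Both $f$ and $\det D^2v$ are positive and continuous, so the lemma applies in each case.

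First I would prove the lower inclusion in \eqref{n003}. Given $\eps>0$, I choose $\alpha=\alpha(\eps)\in(0,1)$ close enough to $1$ that $\frac{1}{1+\alpha}<\frac12+\eps$. Lemma~\ref{treg} applied to $u$ then delivers
\beq\label{nstan1u}
S_h^c[u](0)\cap\{x_n=0\}\supset B_{C^{-1}h^{1/2+\eps}}(0)\cap\{x_n=0\}
\eeq
for all small $h>0$, and intersecting with the hyperplane $\{x_1=0\}$ yields the lower inclusion. The same argument applied to $v$ produces the companion statement
\beq\label{nstan2v}
S_h^c[v](0)\cap\{y_1=0\}\supset B_{C^{-1}h^{1/2+\eps}}(0)\cap\{y_1=0\},
\eeq
which will be the key input for the upper bounds.

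Next I would derive both \eqref{n100} and the upper inclusion in \eqref{n003} by duality. By \eqref{inn}, for every $x\in S_h^c[u](0)$ and every $y\in S_h^c[v](0)$ one has $|x\cdot y|\le Ch$. Fix any $i\in\{2,\dots,n-1\}$: the vectors $\pm C^{-1}h^{1/2+\eps}\,e_i$ sit inside $\{y_1=0\}$, so by \eqref{nstan2v} they lie in $S_h^c[v](0)$. Pairing with any $x\in S_h^c[u](0)$ gives
$$C^{-1}h^{1/2+\eps}\,|x_i|\le Ch,$$
which is exactly \eqref{n100}. Restricting $x$ to the slice $\{x_1=x_n=0\}$ and combining the estimates for $i=2,\dots,n-1$ then yields the upper inclusion in \eqref{n003} after absorbing a dimensional constant.

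The main obstacle is really the sharpness of the exponent: Lemma~\ref{treg} only delivers $h^{1/(1+\alpha)}$ for $\alpha<1$, not $h^{1/2}$, so an arbitrarily small $\eps$ loss on the lower side is unavoidable. The elegance of the argument is that this loss transfers transparently through the bilinear pairing $|x\cdot y|\le Ch$ into the same loss $\eps$ on the upper side, so no additional regularity of $\pom$ or $\p\Om^*$ beyond the quadratic upper bound already built into \eqref{hdom} is required.
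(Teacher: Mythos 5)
Your proposal is correct and follows essentially the same route as the paper's own proof: apply the tangential $C^{1,1-\eps}$ regularity of Lemma~\ref{treg} to $u$ for the lower inclusion, apply it to the dual potential $v$ to place $C^{-1}h^{1/2+\eps}e_i$ inside $S_h^c[v](0)$, and then use the duality pairing $|x\cdot y|\le Ch$ from Corollary~\ref{cdual-1} to convert that into the upper bound \eqref{n100} and the second inclusion in \eqref{n003}. The only difference is that you spell out explicitly the choice $\alpha=\alpha(\eps)$ and the constant bookkeeping, which the paper leaves implicit.
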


\begin{proof}
By the tangential $C^{1,1-\eps}$ regularity for $u$, we have 
$S^c_{h;1}[u]\cap\{x_n=0\}\supset B'_{C^{-1}h^{1/2+\eps}}(0)$, which is the first inclusion of \eqref{n003}.

For any points $x\in S^c_h[u]$ and $y\in S^c_h[v]$,
by \eqref{inn} we have $|x\cdot y|\le Ch$.
By the tangential $C^{1,1-\eps}$ regularity for $v$, 
we have $C^{-1}h^{\frac 12+\eps}e_i\in S^c_h[v], (i=2, \cdots, n-1)$. 
Hence $(h^{\frac 12+\eps}e_i)\cdot x\le Ch$ $\forall \ x\in S^c_h[u]$, namely
\beq\label{n012}
|x_i|=|x\cdot e_i| \le Ch^{\frac 12-\eps}\ \ \forall\, x\in S^c_h[u].
\eeq
We obtain \eqref{n100} and the  second inclusion of \eqref{n003}.
\end{proof}

From \eqref{n100} and Remark \ref{cwidth}, we also obtain 
\beq\label{n666}
	|x\cdot e_i| \le Ch^{\frac 12-\eps}\ \ \forall\, x\in S_h[u]  \text{ and } i=2,\cdots,n-1.
\eeq
The same estimate is true for $S_h[v], S^c_h[v]$ as well.  
From \eqref{n666} we will derive two consequences: one is a decomposition \eqref{cooo}, 
the other one is the decay estimate in Corollary \ref{keyco}.

We shall first derive the decomposition \eqref{cooo}.
Note that in high dimensions, there may be a small portion of $S_h[u]\cap\{x_1>0\}$, whose projection on the plane $\{x_1=0\}$ 
is not contained in $S_{h;1}:=S_h[u]\cap \{x_1=0\}$. 
Nevertheless, we have the following inclusion.

\begin{corollary}\label{copp}
Let $S'_{h;1}[u]$
be the projection of $S_h[u]\cap\{x_1>0\}$ on the plane $\{x_1=0\}$.
Then  
	\begin{equation}\label{12good}
		S'_{h;1}[u]\subset (1+o(1))S_{h;1}[u]
	\end{equation}
as $h\to0$, where the dilation is with respect to $z$, the centre of $S_{h;1}[u].$ 
\end{corollary}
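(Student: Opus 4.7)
The plan is to reduce the inclusion to comparing the projection with the "super-slice" $S^+_{h;1}=S_h^+\cap\{x_1=0\}$, and then to show that $S^+_{h;1}$ differs from $S_{h;1}$ by a layer whose thickness is negligible compared with the size of $S_{h;1}$. Concretely, take any $x=(x_1,x_2,\ldots,x_n)\in S_h[u]\cap\{x_1>0\}$ with projection $\tilde x=(0,x_2,\ldots,x_n)$. Since $u$ has been extended to $\R^n$ and $u_1\ge 0$ on $\{x_1\ge 0\}$ by property $(i)$, convexity gives $u(\tilde x)\le u(x)<h$, so $\tilde x\in S^+_{h;1}$. If additionally $\tilde x\in\Om$, then $\tilde x\in S_{h;1}$ and there is nothing to prove, so we may assume $x_n<\rho(0,x'')$, where $x''=(x_2,\ldots,x_{n-1})$.

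In this case I would bound the deficit $d:=\rho(0,x'')-x_n$. Using $x\in\Om$, we have $x_n\ge\rho(x_1,x'')$, and by the $C^{1,1}$ regularity of $\pom$ with $\rho(0)=0$, $D\rho(0)=0$, together with the convexity of $\rho$, one obtains
\[
\rho(0,x'')\le \rho(x_1,x'')+|D_1\rho(0,x'')|\,|x_1|\le \rho(x_1,x'')+L|x''|\,|x_1|,
\]
so $d\le L|x_1||x''|$. Invoking Lemma \ref{lem4.6} (via Lemma \ref{rela1}) we have $|x_1|,|x_i|\le Ch^{1/2-\eps}$, hence $d\le Ch^{1-2\eps}$ for any fixed small $\eps>0$.

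Next I would quantify the size of $S_{h;1}$ in the $x_n$-direction from below. By \eqref{gV2}, $|S_h|\approx h^{n/2}$, while by Lemma \ref{lem4.6} the set $S_h$ sits inside the slab $\{|x_i|\le Ch^{1/2-\eps}:i=1,\ldots,n-1\}$. Therefore the $x_n$-extent of $S_h$ satisfies
\[
a_n\ge c\,h^{n/2-(n-1)(1/2-\eps)}=c\,h^{1/2+(n-1)\eps}.
\]
Since $S_h$ has good shape after normalisation (uniform density, Lemma \ref{luni}), the cross-section $S_{h;1}$ inherits a good shape around its centre $z$, and in particular $z_n$ and the $x_n$-width of $S_{h;1}$ at $(x_2,\ldots,x_{n-1})=z''$ are both comparable to $a_n$, hence $\ge c\,h^{1/2+(n-1)\eps}$.

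Finally, combine the two estimates: the ratio
\[
\frac{d}{z_n}\le \frac{C\,h^{1-2\eps}}{c\,h^{1/2+(n-1)\eps}}=C\,h^{1/2-(n+1)\eps}\longrightarrow 0\qquad\text{as }h\to 0,
\]
provided $\eps$ is chosen small enough. The shift from $\tilde x$ to the point $(0,x'',\rho(0,x''))\in\overline{\Om}\cap\{x_1=0\}$ is purely in the $e_n$ direction and has length $d$; using the good shape of $S_{h;1}$ around $z$, the linear dilation factor needed to absorb this shift is $1+O(d/z_n)=1+o(1)$, yielding $\tilde x\in(1+o(1))S_{h;1}$ with dilation centred at $z$. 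The main technical obstacle is item three: ensuring a clean lower bound on the $x_n$-extent of $S_{h;1}$ (not just of $S_h$) that survives the loss of $\eps$ in the upper tangential bounds; this is handled by exploiting the good-shape/uniform-density property of the cross-sections, rather than by a naive $C^{1,\delta}$ bound which would be too weak.
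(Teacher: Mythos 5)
There is a genuine gap. You invoke Lemma \ref{lem4.6} to claim $|x_1|\le Ch^{1/2-\eps}$, but that estimate only holds for the coordinates $x_i$ with $i=2,\ldots,n-1$ (its proof rests on the tangential regularity of the dual potential $v$ in those directions). The $e_1$-direction is precisely the exceptional direction of the contradiction hypothesis: Lemma \ref{co61} gives $q_1\ge h^{1/3+\eps}$, so $|x_1|$ can be of order $h^{1/3}$, far larger than $h^{1/2-\eps}$. This false bound enters your argument in both places where you lean on it: in the deficit estimate $d\le L|x_1|\,|x''|$ (which then gives nothing better than $d\lesssim q_1 h^{1/2-\eps}$, with no useful upper bound on $q_1$ available), and in the volume argument for the $x_n$-extent of $S_h$, where you confine $S_h$ to a slab $\{|x_i|\le Ch^{1/2-\eps}:i=1,\ldots,n-1\}$ that $S_h$ does not actually lie in. Once the wrong bound on $|x_1|$ is removed, neither your upper bound on $d$ nor your lower bound on $z_n$ survives, and the ratio $d/z_n$ cannot be shown to vanish.

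The paper circumvents this entirely by bounding $x_n$ rather than the deficit $\rho(0,x'')-x_n$: since $\tilde x\notin\Omega$, one has $x_n<\rho(0,x'')\le C|x''|^2\le Ch^{1-2\eps}$, using only the $C^{1,1}$ regularity of $\pom$ and the tangential bound on $|x''|$, and this estimate is completely independent of $x_1$. The plain $C^{1,\delta}$ lower bound $z_n\ge Ch^{1/(1+\delta)}$, which you dismissed as ``too weak'', then suffices: $h^{1-2\eps}=o\bigl(h^{1/(1+\delta)}\bigr)$ as soon as $\eps<\delta/(2(1+\delta))$. Finally, your dilation step sends $\tilde x$ vertically to $w=(0,x'',\rho(0,x''))$ and invokes the good shape of $S_{h;1}$, but you never verify that $w\in S_{h;1}$; the paper instead intersects the segment $\overline{z\tilde x}$ with $\pom$ at a point $y$, notes $y\in S_{h;1}$ by convexity of $u$ along that segment (since $u(z)<h$ and $u(\tilde x)<h$), and computes $|z\tilde x|/|zy|=(z_n-x_n)/(z_n-y_n)\to 1$, which is the clean way to conclude.
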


\begin{proof}
Let $\tilde{x}=(0, x'', x_n)\in  S'_{h;1},$ where $ x''=(x_2,\cdots, x_{n-1})$.
By definition of $S'_{h;1}[u]$, there is $t>0$ such that $x=(t, x'', x_n)\in S_h[u]$ and $u(x) < h$. 
{If $\tilde x\in\Omega$, since $u_1>0$, one must have $u(\tilde x) < h$, and thus $\tilde x\in S_{h;1}[u]$.}
 
If $\tilde x\notin\Omega$, let $z$ be the centre of $S_{h;1}[u]$. 
By the $C^{1,\delta}$ regularity of $u$, 
we have $z_n=z\cdot e_n  \ge C h^{\frac{1}{1+\delta}}$.
From \eqref{n666}, 
$|x''|\le  Ch^{\frac{1}{2}-\epsilon}$.
Since $\tilde x\notin\Omega$ and $\partial\Omega\in C^{1,1}$,
\beq\label{n404}
 x_n \le C|x''|^2 \le C h ^{1-2\epsilon} =o(h^{\frac{1}{1+\delta}})=o(z_n).
 \eeq
 Let $\ell$ be the segment connecting $z$ and $\tilde x$,
and let $y$ be the intersection of $\ell$ and $\partial\Omega.$ 
{Since $u(z)<h$, $u(\tilde x)<h$, we have $u(y)<h$ and thus $y\in S_{h;1}[u].$}
Write $y=(0,y'',y_n)$. 
 By \eqref{n666} again, we have $|y''|  \le C h^{\frac{1}{2}-\epsilon}.$ Then since $y\in \partial\Omega$, one has $y_n \le C h ^{1-2\epsilon} \ll z_n.$
Therefore, 
\begin{equation*}
\lim_{h\to 0} \frac{|z\tilde x|}{|z y|} = \lim_{h\to 0} \frac{|z_n-x_n|}{|z_n-y_n|} =1,
\end{equation*}
from which one easily obtains \eqref{12good}. 
\end{proof}

Next we estimate the size of $S_h[u]\cap \{x_1<0\}$.
We introduce a cone with vertex $q$ (see \eqref{dqq}) and passing through $S_{h;1}[u]$, namely
 $$\mathcal{V}=\{q+t(x-q): x\in S_{h;1}[u], t\geq0\}.$$
By the convexity of $S_h[u]$, we have 
$S_h[u]\cap\{x_1<0\}\subset \mathcal{V}$.
Hence by Corollaries \ref{rl42} and \ref{copp},
\begin{equation}\label{cooo}
S_h[u] \subset [\xi_1, q_1] \times \beta S_{h;1}[u]
\end{equation}
for a constant $\beta >0$ independent of $h$,
where $\beta S_{h;1}[u]$ denotes the $\beta$-dilation with respect to the centre of $S_{h;1}[u]$. 
{Indeed, by performing an affine transform in the $e_2,\cdots, e_n$ directions, we may assume $S_{h;1}[u]$ is 
normalised. Then by  Corollary  \ref{copp}  we have that $q'=(0, q_2,\cdots, q_n)\in (1+o(1))S_{h;1}[u].$ Hence, by 
Corollary \ref{rl42} and using the fact that $S_h[u]\cap\{x_1<0\}\subset \mathcal{V}$ we have \eqref{cooo}.

 \begin{remark}\label{applytov}
 Replacing the $e_1$-direction by the $e_n$-direction,
the same argument for \eqref{cooo} also applies to $S_h[v]$ and yields 
 \begin{equation}\label{cooo1}
S_h[v] \subset [\xi^*_n, q^*_n] \times \beta^* S_{h;n}[v]
\end{equation}
for a constant $\beta^*>0$ independent of $h$,
where $\xi^*, q^* \in \partial S_h[v]$ is defined analogously to \eqref{dqq} (where $e_1$ is replaced by $e_n$) and
$S_{h;n}[v]:=S_h[v]\cap\{y_n=0\}$. 
 \end{remark}

As another consequence of \eqref{n666}, we next derive a decay estimate analogous to Corollary \ref{co41}.  
 
\begin{lemma}\label{co61}
 For any given $\varepsilon>0$ small, we have $q_1\geq h^{\frac13+\varepsilon}$, provided $h$ is sufficiently small.
\end{lemma}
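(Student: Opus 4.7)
My plan is a volume comparison. By the uniform density estimate~\eqref{gV2}, $\Vol(S_h[u])\gtrsim h^{n/2}$, while the cuboid containment~\eqref{cooo} combined with the balance $(q_1-\xi_1)\le(1+\delta_0^{-1})q_1$ of Corollary~\ref{rl42} gives
\[
\Vol(S_h[u])\;\le\;(q_1-\xi_1)\,\beta^{n-1}\,|S_{h;1}[u]|_{\mathcal{H}^{n-1}}\;\le\;C\,q_1\,|S_{h;1}[u]|_{\mathcal{H}^{n-1}}.
\]
Comparing the two inequalities gives $q_1\gtrsim h^{n/2}/|S_{h;1}[u]|_{\mathcal{H}^{n-1}}$, so the problem reduces to a polynomial upper bound on the $(n-1)$-dimensional volume of the cross-section $S_{h;1}[u]=S_h[u]\cap\{x_1=0\}$.

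To bound $|S_{h;1}[u]|_{\mathcal{H}^{n-1}}$ from above, I would control the diameter of $S_{h;1}[u]$ in each of the coordinate directions $e_2,\ldots,e_n$. For $i=2,\ldots,n-1$ the vector $e_i$ is tangential to both $\partial\Omega$ and $\partial\Omega^*$ at the origin, and Lemma~\ref{lem4.6} directly gives $|x_i|\le Ch^{1/2-\varepsilon}$. For the remaining direction $e_n$, although it is the inner normal of $\partial\Omega$ at the origin (and hence not tangential for $u$), it is tangential to $\partial\Omega^*$ at the origin because $\Omega^*\subset\{y_1>0\}$. I would therefore apply Lemma~\ref{treg} to the dual potential $v$ on $\Omega^*$: this places $h^{1/(2-\varepsilon)}e_n\in S_h[v]\subset S^c_{bh}[v]$, and the duality~\eqref{inn} of Corollary~\ref{cdual-1} then produces a matching upper bound $|x_n|\le Ch^{(1-\varepsilon)/(2-\varepsilon)}$ for $x\in S^c_{bh}[u]$, which transfers to $S_h[u]$ via~\eqref{relate1}.

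Multiplying the $n-1$ diameter bounds yields a polynomial upper bound on $|S_{h;1}[u]|_{\mathcal{H}^{n-1}}$, and inserting this into the volume comparison produces the desired polynomial lower bound on $q_1$. To reach the precise exponent $\tfrac13+\varepsilon$ (rather than the easier-but-weaker exponent coming from a naive product), I would combine the diameter estimates with the geometric constraint $x_n\ge c|x''|^2$ inherited from the $C^{1,1}$ regularity of $\partial\Omega$: this forces $S_{h;1}[u]$ into a narrow cuspidal region near the origin whenever $\partial\Omega$ is genuinely curved in the tangential directions, shrinking $|S_{h;1}[u]|_{\mathcal{H}^{n-1}}$ further.

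\textbf{Main obstacle.} The most delicate step is bounding $S_{h;1}[u]$ in the $e_n$ direction, since $e_n$ is not tangential for $u$ on $\partial\Omega$ and so Lemma~\ref{lem4.6} does not apply directly; the resolution is to exploit that $e_n$ \emph{is} tangential for $v$ on $\partial\Omega^*$ and pass through the duality~\eqref{inn}. A secondary subtlety is the careful bookkeeping of the various $\varepsilon$'s that appear in the tangential regularity applications together with the cuspidal constraint from $C^{1,1}$ of $\partial\Omega$, which together fix the specific exponent $\tfrac13+\varepsilon$ in the final estimate.
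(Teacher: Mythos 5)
Your opening move is the same as the paper's: compare $|S_h[u]|\approx h^{n/2}$ with a cuboid bound $|S_h[u]|\lesssim q_1\cdot(\text{cross-sectional volume})$. The tangential bound $|x_i|\le Ch^{1/2-\varepsilon}$ for $i=2,\ldots,n-1$ from Lemma~\ref{lem4.6} is also what the paper uses. But the route you take for the $e_n$-direction leads to a bound that is too weak, and your proposed repair does not close the gap.

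Concretely, your duality estimate (Lemma~\ref{treg} applied to $v$, then~\eqref{inn}) yields $|x_n|\lesssim h^{(1-\varepsilon)/(2-\varepsilon)}\approx h^{1/2-O(\varepsilon)}$. Plugging this into the volume comparison gives
\[
h^{n/2}\lesssim q_1\cdot h^{(n-2)(1/2-\varepsilon)}\cdot h^{1/2-O(\varepsilon)},
\]
which after cancellation produces only $q_1\gtrsim h^{1/2+O(\varepsilon)}$ --- a bound \emph{weaker} than the desired $q_1\ge h^{1/3+\varepsilon}$. You recognise this and propose to recover the exponent $\tfrac13$ by invoking a constraint $x_n\ge c|x''|^2$, but this is not available: under \eqref{hdom} the $C^{1,1}$ condition gives the \emph{upper} bound $\rho(x')\le C|x'|^2$ on the boundary (equivalently $\{x_n>C|x'|^2\}\subset\Omega$), not a lower bound; $\partial\Omega$ could be flat near $0$. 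And even where a parabolic lower bound does hold, removing the sliver $\{x_n<c|x''|^2\}$ from a box of height $h^{1/2-O(\varepsilon)}$ changes the volume only by a lower-order term, since $h^{1-2\varepsilon}\ll h^{1/2-O(\varepsilon)}$; it cannot improve the exponent from $\tfrac12$ to $\tfrac13$.

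The missing idea in the paper's proof is structural: because $u_1>0$, the supremum of $x_n$ over $S_h[u]$ cannot be attained at an interior level-set point (the outer normal there would have to be $\pm e_n$, contradicting $u_1>0$), so it is attained on $\partial\Omega$. There the $C^{1,1}$ upper bound $\rho(x')\le C|x'|^2$ together with $|x_1|\lesssim q_1$ and $|x''|\lesssim h^{1/2-\varepsilon}$ gives
\[
\sup_{S_h[u]}x_n\;\lesssim\;q_1^2+h^{1-2\varepsilon},
\]
a much sharper bound than $h^{1/2-O(\varepsilon)}$ in the relevant regime. Feeding this into the volume comparison yields $h^{1+(n-2)\varepsilon}\lesssim q_1(q_1^2+h^{1-2\varepsilon})$, from which $q_1\ge h^{1/3+\varepsilon}$ follows. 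This step --- locating the $x_n$-maximiser on $\partial\Omega$ and using the paraboloid upper bound on $\rho$ there --- is the crux, and it is absent from your argument.
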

 
\begin{proof}
For any $x\in S_h[u],$ by \eqref{n666} we have $|x_i|\leq Ch^{\frac{1}{2}-\epsilon}$ for $i=2,\cdots, n-1$. 
By Corollary \ref{rl42} we also have $q_1\geq C|x_1|.$
Since $u_1>0$, we see that 
$ \sup\{e_n\cdot x:\ x\in S_h[u]\}$ must be attained on the boundary $\partial\Omega$.
Since $\partial\Omega\in C^{1,1}$,  we have
	\begin{equation*}
		x_n \leq C\sum_{i=1}^{n-1}x_i^2 \leq C(q_1^2 + h^{1-2\varepsilon}) \quad \ \ \forall\ x\in S_h[u]\cap\partial\Omega.
	\end{equation*} 
From \eqref{gV2},  the volume $|S_h[u]|\approx h^{\frac{n}{2}}$.
Hence 
	\begin{equation*}
		h^{\frac{n}{2}} \approx |S_h[u]|\leq Cq_1(q_1^2+h^{1-2\varepsilon})h^{\frac{n-2}{2}-(n-2)\varepsilon}.
	\end{equation*}
Therefore $q_1\geq h^{\frac13+\varepsilon}$  for any given $\varepsilon>0$ small. 	
\end{proof}

From Lemma \ref{co61}, 
similarly to \eqref{key1},
we have the following corollary.
\begin{corollary}\label{keyco}
For $t>0$ small, denote
		\begin{equation*}\begin{split}
			\underline{u}(t) &= \inf\{u(t,x_2,\cdots,x_n) :  (t,x_2,\cdots,x_n)\in\Omega\}, \\
			\underline{\partial_1u}(t) &= \inf\{\partial_1u(t,x_2,\cdots,x_n) :  (t,x_2,\cdots,x_n)\in\Omega\}. 
		\end{split}
		\end{equation*}
	We have the asymptotic behaviour 
	\begin{equation}\label{keyh} 
	\begin{split}
		\underline{u}(t) \leq C t^{3-\varepsilon}, \\
		\underline{\partial_1u}(t) \leq C t^{2-\varepsilon}    
	\end{split}
	\end{equation}
for $t>0$ small, where $\varepsilon>0$ is any given small constant. 
\end{corollary}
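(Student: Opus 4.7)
The plan is to follow the two-step template used in Corollary~\ref{co41} and the corollary immediately following it in the two-dimensional setting, with Lemma~\ref{co61} now replacing the balance estimate of Lemma~\ref{lemban}. The key geometric input $q_1 \geq h^{1/3+\varepsilon}$ is already in hand, so only the translation into asymptotic pointwise bounds remains.

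First, for $\underline u(t)$: by the strict convexity of $u$, the sub-level set $S_h[u]$ shrinks to the origin as $h \to 0$, so for each small $t>0$ there is a unique $h = h(t) > 0$ such that the hyperplane $\{x_1 = t\}$ is tangent to $\partial S_h[u]$ at a point $q$ with $q_1 = t$. Property~(i) at the start of \S\ref{ss42} gives $u_1 \geq 0$, whence the monotonicity of $u$ in $x_1$ forces $\underline u(t) = h$ (both $q_1 < t$ and $q_1 > t$ contradict the definition of $\underline u$). Lemma~\ref{co61} then yields $t = q_1 \geq h^{1/3+\varepsilon'}$ for any prescribed small $\varepsilon'>0$, so after inversion
\[
 \underline u(t) = h \leq t^{1/(1/3+\varepsilon')} \leq C t^{3-\varepsilon},
\]
provided $\varepsilon'$ is chosen small in terms of the target exponent $\varepsilon$.

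For $\underline{u_1}(t)$: convexity of $u$ along $e_1$ together with $u\geq 0$ gives
\[
 u_1(t, x_2, \ldots, x_n) \leq \frac{u(2t, x_2, \ldots, x_n) - u(t, x_2, \ldots, x_n)}{t} \leq \frac{u(2t, x_2, \ldots, x_n)}{t}
\]
whenever the horizontal segment from $(t, x'', x_n)$ to $(2t, x'', x_n)$ lies in $\Omega$. Taking the infimum and invoking the first estimate at $2t$ yields $\underline{u_1}(t) \leq \underline u(2t)/t \leq C t^{2-\varepsilon}$.

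The one subtlety, absent in the two-dimensional case, is that the slices $\{(x'', x_n) : (2t, x'', x_n)\in\Omega\}$ and $\{(x'', x_n) : (t, x'', x_n)\in\Omega\}$ need not be nested: in several variables, convexity of $\rho$ with $\rho(0)=0$ does not force $\rho(\cdot, x'')$ to be monotone. I expect this to be the main (and only) technical point, and plan to handle it using the $C^{1,1}$ regularity of $\partial\Omega$ together with $D\rho(0)=0$, which provide $|\rho(2t, x'') - \rho(t, x'')| \lesssim t(t+|x''|)$. Combined with the localisation of the minimiser coming from Lemma~\ref{lem4.6}, and the tangential $C^{1,1-\varepsilon}$ regularity from Lemma~\ref{treg} (which gives $|\nabla u(x)| \lesssim |x|^{1-\varepsilon}$ near the origin), a small vertical perturbation $\bar x_n \mapsto \bar x_n + O(t^2)$ brings the minimiser inside both slices while changing $u$ by at most $O(t^{3-\varepsilon})$; dividing by $t$ preserves the $t^{2-\varepsilon}$ bound. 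Everything else is exponent bookkeeping.
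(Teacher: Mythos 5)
Your first estimate is correct and faithfully mirrors the proof of Corollary~\ref{co41}: the tangency of $\{x_1=t\}$ with $\partial S_h[u]$ at $q$ (where $q_1=t$) gives $\underline u(t)=h$, and inverting $q_1\ge h^{1/3+\varepsilon'}$ from Lemma~\ref{co61} gives $h\le t^{3/(1+3\varepsilon')}\le Ct^{3-\varepsilon}$. The bookkeeping $\varepsilon'\lesssim\varepsilon$ is fine.

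Your observation about the second estimate is a genuine point that the paper glides over by saying ``similarly to \eqref{key1} and \eqref{key2}'': in two dimensions the 1-D convex function $\rho$ with $\rho\ge0=\rho(0)$ is automatically non-decreasing on $[0,\infty)$, so the slices are nested, whereas in higher dimensions $\rho(\cdot,x'')$ may be decreasing near $x_1=0$ for fixed $x''\neq 0$ (e.g.\ $\rho(x')=c(x_1-x_2)^2$). So the concern is legitimate. However, the repair you sketch does not go through as stated. The claim that Lemma~\ref{treg} gives $|\nabla u(x)|\lesssim |x|^{1-\varepsilon}$ near the origin is not justified: the tangential $C^{1,1-\varepsilon}$ estimate controls $u$ only along the tangential hyperplane $\{x_n=0\}$, not the full gradient, and in \S\ref{ss42} one is precisely in the regime where obliqueness is assumed to fail, so only the generic global $C^{1,\delta}$ bound of \cite{C92} is available. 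With merely $|Du(x)|\lesssim |x|^{\delta}$ for small $\delta$, moving the minimiser vertically by $O(t^2)$ can change $u$ by $O(t^{2\delta})$ or $u_1$ by $O(t^{2\delta'})$, and neither is $O(t^{3-\varepsilon})$ resp.\ $O(t^{2-\varepsilon})$ unless $\delta$ happens to be at least about $1/2$. So as written the perturbation does not close the argument.

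The cleanest way to bypass both the nesting and the missing gradient--H\"older input is not to compare two slices at all, but to read off $\underline{u_1}$ directly from the tangency geometry, exactly as the first estimate does. Let $h=\underline u(t)$ and let $q$ be the point realising $\sup\{x\cdot e_1:\ x\in S_h[u]\}=t$. When $q$ is an interior point (the boundary case is handled by approximating from inside $\Omega$), the hyperplane $\{x_1=t\}$ is tangent to $\partial S_h[u]$ at $q$, so $Du(q)$ is parallel to $e_1$ and $u_1(q)=|Du(q)|$. The discussion around \eqref{incl0}--\eqref{gesti} (with $p_0=0\in\tfrac12 S_h^c(0)$, using Lemma~\ref{rela1} to pass between $S_h$ and $S_h^c$), or equivalently the duality $u(q)+v(q^*)=q\cdot q^*$ with $|q\cdot q^*|\le Ch$ from Corollary~\ref{cdual-1} and $q\cdot q^*=t\,u_1(q)$ since $q^*\parallel e_1$, gives $u_1(q)\approx h/t$. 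Hence
\begin{equation*}
\underline{u_1}(t)\le u_1(q)\ \lesssim\ \frac{\underline u(t)}{t}\ \lesssim\ t^{2-\varepsilon},
\end{equation*}
with no need to compare the slice at $t$ with the slice at $2t$. This is the step your proposal should appeal to in place of the unjustified $|\nabla u(x)|\lesssim|x|^{1-\varepsilon}$ bound.
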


\begin{remark}\label{upest}
By Lemma \ref{rela1} and Lemma \ref{co61} we have 
	$$s\gtrsim q_1\gtrsim h^{\frac13+\varepsilon},\quad\text{ where } s:=\sup\{x\cdot e_1 :  x\in S_{bh}^c[u]\}.$$ 
Let $d=\sup\{x\cdot e_n: x\in S_{bh}^c[u],\, x_1=0\}.$ Then by Lemma \ref{lem4.6} and \eqref{gV1}, we have 
$$h^{\frac{n}{2}}\approx |S_{bh}^c[u]|\gtrsim h^{\frac{n-2}{2}(1+\epsilon)}sd,$$
which implies $d\lesssim h^{\frac{2}{3}-\epsilon}.$ By Lemma \ref{rela1} again, we obtain that 
\beq\label{rembd}
	\sup\{x\cdot e_n: x\in S_{h;1}[u]\}\lesssim h^{\frac{2}{3}-\epsilon}.
\eeq
\end{remark}

In order to bound the sub-level set $S_h[u]$ by a cuboid, we need to further decompose $S_{h;1}[u]$ in \eqref{cooo} along $e_n$ direction.  
Denote 
	$$S^c_{h; 1, 0}=S^c_{h;1}[u]\cap\{x_n=0\},$$
where $S^c_{h;1}[u]=S^c_h[u]\cap\{x_1=0\}$ was introduced above.

\begin{lemma}\label{l405} 
Let $P_h$ be the projection of $S_{h;1}[u]$ on $\{x_n=0\}.$ Then we have
\beq\label{4f7}
P_h \subset \beta S^c_{h; 1, 0}
\eeq
for a constant $\beta$ independent of $h$ and $u$.
\end{lemma}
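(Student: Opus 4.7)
The plan is to adapt the line-segment argument of Remark \ref{rem000}, which treats boundary points of $S^c_{h;1}[u]\cap\partial\Omega$, to an arbitrary interior point $x=(0,x'',x_n)\in S_{h;1}[u]$, and then to pass back to $S^c_{h;1,0}[u]$ via the comparability of centred sections at comparable heights.

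First, by Lemma \ref{rela1}, every $x\in S_{h;1}[u]$ satisfies $x\in S^c_{bh;1}[u]:=S^c_{bh}[u]\cap\{x_1=0\}$ for a universal constant $b\ge 1$. By Lemma \ref{Lemma2.2}, $S^c_{bh}[u]$ is comparable to a centred ellipsoid $E$ in $\R^n$ (up to universal constants), and consequently $S^c_{bh;1}[u]$ is comparable to the $(n-1)$-dimensional centred ellipsoid $E_1:=E\cap\{x_1=0\}$. Let $R_n:=\sup\{t:te_n\in S^c_{bh;1}[u]\}$; the central symmetry of $E_1$ combined with the ellipsoidal approximation yields some $r'_n\ge cR_n$ with $-r'_n e_n\in S^c_{bh;1}[u]$, as well as the uniform bound $|y_n|\le CR_n$ for every $y=(0,y'',y_n)\in S^c_{bh;1}[u]$ (both constants being universal).

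By convexity, the line segment from $-r'_n e_n$ to $x$ lies in $S^c_{bh;1}[u]$ and crosses $\{x_n=0\}$ at
\[
y=\Bigl(0,\,\tfrac{r'_n}{x_n+r'_n}\,x'',\,0\Bigr)\in S^c_{bh;1,0}[u].
\]
Since $x_n\le CR_n$ and $r'_n\ge cR_n$, the scale factor $\tfrac{x_n+r'_n}{r'_n}\le 1+C/c=:C_1$ is universally bounded. Hence $(0,x'',0)=\tfrac{x_n+r'_n}{r'_n}\,y\in C_1\,S^c_{bh;1,0}[u]$.

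Finally, I would invoke the comparability of centred sections at heights $h$ and $bh$: $S^c_{bh}[u]\subset C_b\,S^c_h[u]$ (dilation from the centre $0$) for a universal $C_b$, a standard consequence of Lemma \ref{Lemma2.2} and the doubling of the Monge-Amp\`ere measure under our density bounds. Because the subspace $\{x_1=0,x_n=0\}$ passes through $0$, slicing commutes with this dilation and gives $S^c_{bh;1,0}[u]\subset C_b\,S^c_{h;1,0}[u]$. Combining the two steps yields $(0,x'',0)\in\beta\,S^c_{h;1,0}[u]$ with $\beta:=C_1C_b$ universal, proving the lemma.

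The main technical point is the last step. Because the centred sub-level sets at heights $h$ and $bh$ are defined with different defining affine functions (through the mass-centre condition), the nesting $S^c_h[u]\subset S^c_{bh}[u]$ is not automatic, and the comparability $S^c_{bh}[u]\subset C_b\,S^c_h[u]$ must be extracted from the ellipsoidal approximation of Lemma \ref{Lemma2.2} together with the doubling of the Monge-Amp\`ere measure, as in the Caffarelli--Guti\'errez theory of sections.
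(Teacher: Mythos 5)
Your reduction to $S^c_{bh;1}[u]$ via Lemma \ref{rela1}, and the extraction of a point $-r'_n e_n\in S^c_{bh;1}[u]$ with $r'_n\geq cR_n$ from the centred ellipsoidal comparison of Lemma \ref{Lemma2.2}, are fine. The gap is the parenthetical claim ``as well as the uniform bound $|y_n|\leq CR_n$ for every $y\in S^c_{bh;1}[u]$,'' which you attribute to the ellipsoidal approximation. It does not follow. Lemma \ref{Lemma2.2} says $S^c_{bh;1}[u]$ is comparable to \emph{some} centred ellipsoid $E_1$ in $\mathrm{span}(e_2,\dots,e_n)$, but nothing available at this stage of the paper forces $e_n$ to be close to a principal axis of $E_1$. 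If $E_1$ is sheared in the $(e_2,e_n)$-plane, the support width $d_n=\sup\{y_n:y\in E_1\}$ can be much larger than the ray length $R_n=\sup\{t:te_n\in E_1\}$; a direct computation with $E_1=\{(y_2-Ly_n)^2/a^2+\dots+y_n^2/b^2\leq1\}$ shows $d_n/R_n=\sqrt{L^2b^2+a^2}/a$, which is precisely the ratio of the $e_2$-projection width of $E_1$ to its $e_2$-slice width. This ratio is exactly what the lemma is trying to bound, so the step is circular. With only the information proved so far (Lemma \ref{lem4.6} and the $C^{1,\delta}$ regularity), one gets at best $d_n/R_n\lesssim h^{-2\epsilon}$, which diverges as $h\to0$ and therefore does not yield a universal $\beta$.

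There is a second subtlety you seem to be implicitly relying on: in the geometry of \S\ref{ss42}, $\Omega^*\subset\{y_1>0\}$, so $u_1\geq0$ but $u_n$ has \emph{no sign}. In particular one cannot slide a point $(0,x'',x_n)\in S_{h;1}[u]$ down to the boundary $(0,x'',\rho(0,x''))$ while staying in the sub-level set. This is why the paper cannot just treat boundary points, and why the critical case in its proof is when the extremal projected point $p\in\partial S_{h;1}[u]$ is an \emph{interior} point of $\Omega$. There the paper uses the dual potential: since $u_3(p)=\cdots=u_n(p)=0$, the dual point $p^*=Du(p)$ lies in $\mathrm{span}(e_1,e_2)$, and the identity $h\approx u(p)+v(p^*)=p\cdot p^*$ together with the polarity inclusions \eqref{inn}--\eqref{inn1} and Lemma \ref{highl} applied to $v$ yield $r\gtrsim R$. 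Your proposal does not replace this duality input, and I do not see how to close the gap without it. (The final engulfing step $S^c_{bh;1,0}\subset C_b S^c_{h;1,0}$ that you flag as the ``main technical point'' is, by contrast, standard and also used by the paper.)
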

\begin{proof} 
Let $e \in\text{span}\{e_2,\cdots, e_{n-1}\}$ be a unit vector, and denote $r_e:=\sup\{t : te\in S^c_{h;1,0}[u]\}.$
To prove \eqref{4f7}, it suffices to show that 
\begin{equation}\label{ne66}
 |x\cdot e|\le \beta r_e \quad \forall\, x\in S_h[u]\cap \text{span}\{e_n, e\}
 \end{equation}
 for all unit vectors $e \in\text{span}\{e_2,\cdots, e_{n-1}\}$.
 By Lemma \ref{lem4.6}, we have $r_e\ge C^{-1} h^{\frac{1}{2}+\epsilon}.$

Given a unit vector $e \in\text{span}\{e_2,\cdots, e_{n-1}\},$ and a point $p\in S_h[u]\cap \text{span}\{e_n, e\},$
up to a rotation of coordinates,  we assume $e=e_2$ and $p=(0,p_2,0,\cdots,0, p_n)$ with $p_2>0$. 
By Remark \ref{upest},
we have 
\begin{equation}\label{xc2}
 p_n\le C h^{\frac{2}{3}-\epsilon}
\end{equation}
 for $\epsilon$ as small as we want.
 In order to prove \eqref{ne66}, it suffices to show that $p_2 \le \beta r_{e_2}$.
 If $p_2\ll h^{\frac{1}{2}+\epsilon},$ we readily have $p_2=p\cdot e_2\leq r_{e_2}$. 
 Hence it suffices to consider the case
 	\beq\label{keypt} 
		p_2\ge C h^{\frac{1}{2}+\epsilon} \gg p_n\quad\text{ (and thus $\ p_2\approx |p|\ $ for $h$ small)}. 
	\eeq 

By Remark \ref{dforsh}, we have 
 \begin{equation}\label{simp2}
|y\cdot p|\leq Ch \quad \forall\, y\in S_h[v].
\end{equation}
In particular, when $y\in S_{h;n}[v]:=S_{h}[v]\cap \{y_n=0\},$ $y\cdot p = y_2p_2$. Thus we obtain 
	 \begin{equation*}
	\sup\{|y_2|:y\in S_{h;n}[v]\} \le C\frac{h}{p_2} .
	\end{equation*}
 By Remark \ref{applytov}, $\sup\{|y_2|:y\in S_{h}[v]\}\le \beta^* \sup\{|y_2|:y\in S_{h;n}[v]\}$. Therefore, we obtain
	\begin{equation} \label{y2est}
		\sup\{|y_2|:y\in S_{h}[v]\} \le C\beta^* \frac{h}{p_2}.
	\end{equation}

By the definition of $r_{e_2},$ we have $r_{e_2}e_2\in \partial S^c_h[u]$. 
Hence by \eqref{inn1}, there exists $z^*\in \partial S^c_{h}[v]$ such that 
	\begin{equation}\label{cn111}
		z^*\cdot (r_{e_2}e_2) \ge C^{-1} h.
	\end{equation}
By \eqref{y2est} and Remark \ref{cwidth}, we have
\begin{equation}\label{cn222}
z^*\cdot e_2\leq \sup\{y\cdot e_2:y\in S^c_h[v]\} \le C\beta^* \frac{h}{p_2}.
\end{equation}
Hence from \eqref{cn111} and \eqref{cn222}, we obtain the desired inequality 
	\begin{equation}\label{nn6}
		r_{e_2} \ge
		 \frac{C^{-1}h}{z^*\cdot e_2} \ge \frac{1}{C\beta^*} p_2,
	\end{equation}
for a different constant $C>0$. 
This finishes the proof with $\beta=C\beta^*$. 
\end{proof}

Thanks to \eqref{cooo} and Lemma \ref{l405}, we can now show that $S_h[u]$ is contained in a cuboid as follows.  
Denote 
\begin{equation}
\label{defq1}
 d_{n}= \sup\{e_n\cdot x:\ x\in S^c_{h; 1}[u]\}
 \end{equation}
to be the height of  $S^c_h[u]$  on the section $\{x_1=0\}$.
We have 
\begin{align}\label{n101} 
d_n&\gtrsim \sup\{e_n\cdot x:\ x\in S_{b^{-1}h; 1}[u]\} \nonumber \\
&\gtrsim \sup\{e_n\cdot x:\ x\in S_{b^{-1}h}[u]\}\\ 
&\gtrsim  \sup\{e_n\cdot x:\ x\in S_{h}[u]\}\nonumber, 
\end{align}
where the first inequality is due to Lemma \ref{rela1}, the second inequalities follows from \eqref{cooo},
and  the last inequality is due to the convexity of $u,$ which implies that $S_h[u]\subset bS_{b^{-1}h}[u]$, $(b>1)$.

Let $\tilde q\in \p S^c_h[u]$ be the point such that 
 \begin{equation}\label{defq2}
 \tilde q_1= \tilde q\cdot e_1 = \sup\{ x\cdot e_1: x\in S^c_h[u]\}.
 \end{equation}
 By Remark \ref{cwidth} we have that  
 \begin{equation}\label{defq3}
 \sup\{|x\cdot e_1| : x\in S_h[u]\}\lesssim \tilde{q}_1.
 \end{equation}

Let  
\beq\label{cylin0}
\mathcal R_h=[-\tilde q_1, \tilde q_1]\times E'_h\times [-d_n,  d_n]
\eeq
be a cuboid, 
where $E'_h\subset \R^{n-2}$ is an ellipsoid centred at $0$ such that  $E'_h\sim S^c_{h; 1, 0}=S^c_{h; 1}[u]\cap\{x_n=0\}$. 
By Lemma \ref{l405}, \eqref{cooo} and \eqref{defq3}, we have  
\begin{equation}\label{vol11}
S_{h}[u]\subset C \mathcal R_h 
\end{equation} 
for some constant $C$ independent of $h.$
Moreover, by  \eqref{defq1} and \eqref{defq2}  the volume 
$$ |S^c_h[u]|\gtrsim |E'_h|d_n\tilde{q}_1\gtrsim |\mathcal R_h|.$$ 
Hence, 
\beq\label{n0010}
C^{-1} |\mathcal R_h|\le |S^c_h[u]|\approx |S_{h}[u]|\le  C|\mathcal R_h|. 
\eeq

Now we make a linear transform 
$\mathcal T=\mathcal T_2\circ \mathcal T_1$ 
such that the sub-level set $S_h[u]$ has a good shape, 
where $\mathcal T_1$  is a linear transform normalising $E_h'$ to the unit ball in $\R^{n-2}=\text{span}(e_2, \cdots, e_{n-1})$ 
while leaving $x_1$ and $x_n$ unchanged; and $\mathcal T_2$ is given by
\beq \label{n001}
\mathcal T_2:\ \ \ 
\Big\{ {\begin{aligned}
 & \tilde x_1=x_1/\tilde q_1, \hskip10pt \tilde x_n=x_n/d_n ,\\[-3pt]
 & \tilde x_i=x_i\ \ \ \  \text{for}\ 2\le i\le n-1. 
 \end{aligned}}
 \eeq
It is easy to see that $\mathcal T_2\circ \mathcal T_1=\mathcal T_1\circ \mathcal T_2$.

After the transform $\mathcal T$, the set $\mathcal{T}(S_h[u])$ is contained in the cube 
$\mathcal D=[-C, C]^n$, and the volume $|\mathcal T(S_h[u])|\ge \delta_0$ 
for a positive constant $\delta_0$ independent of $h$ and $u$.
Hence $\mathcal T(S_h[u])$ has a good shape. By Lemma \ref{rela1} and Lemma \ref{luni}, we see that 
$\mathcal T(S^c_h[u])$ also has a good shape.
By rescaling back and using Lemma \ref{rela1} again, we have
\beq\label{mmeq}
	C^{-1}\mathcal{R}_n \cap \Om \subset S_h[u] \subset C\mathcal{R}_n
\eeq
for a constant $C$ independent of $h$. 

Having made the transform $\mathcal T$ (note that $\mathcal T=\mathcal T_h$ depends on $h$), accordingly 
we also make the change $u_h(x) = u(\mathcal T^{-1}x)/h$.

Let $\underline{u}(t)$ be the function introduced in Corollary \ref{keyco}.
Similarly to \eqref{subs}, we choose a sequence $\{t_j\}\to0$ such that
	\begin{equation}\label{newcho}
		\frac{\underline{u}(t)}{\underline{u}(t_j)} \leq 2\Big(\frac{t}{t_j}\Big)^{3-\varepsilon} \qquad\forall\, t\in(0,t_j)  ,
	\end{equation}
where $\varepsilon>0$ is the small constant in \eqref{keyh}.
Denote $\mathcal T_j=\mathcal T_{h_j}$, $u_j=u_{h_j}$, where $h_j=\underline{u}(t_j)$.

Similarly as in \S\ref{s4}, by passing to a subsequence, $\Omega_{h_j}:=\mathcal T_j(\Omega)$ converges to a limit $\Omega_0$ as $j\to\infty$, and $\Om_0$ is an unbounded convex domain in $\mathbb{R}^n$. 
Also, $u_j$ converges to a limit $u_0$ as $j\to\infty$, which satisfies the Monge-Amp\`ere equation 
\eqref{eq00} in $\mathbb{R}^n$.

By the proof of Corollary \ref{u0est},
$u_0$ satisfies the asymptotic behaviours \eqref{keyh}.
Moreover, $u_0$ is strictly convex and $C^{1,\alpha}$ regular in $B_k\cap \overline{\Omega_0}$  for any $k>0,$ and $u_0\in C^1(\mathbb{R}^n),$ (see Remark \ref{r401}).

Thanks to the above cuboid decomposition \eqref{cylin0}, we can prove that the boundary $\partial\Omega_0$ is flat in $x_2, \cdots, x_{n-1}$ directions.

\begin{lemma}
\label{god1}
Assume $\Om_h$ sub-converges as $h\to0$ to a convex domain $\Om_0$, locally in the sense of Hausdorff.
Then $\Om_0=\R^{n-2}\times \omega_0$, where $\omega_0$ is a convex set in the 2-dim space $\text{span}\{e_1, e_n\}$.
\end{lemma}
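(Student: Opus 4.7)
The strategy is to describe the boundary $\partial\Omega_h$ locally as a graph $\tilde x_n = \rho_h(\tilde x_1, \tilde x'')$ in the normalised coordinates and to show that the limit graph depends only on $\tilde x_1$. Near $0\in\partial\Omega$ we may write $\partial\Omega=\{x_n=\rho(x_1,x'')\}$ with $\rho$ convex and $C^{1,1}$, $\rho\ge 0$, $\rho(0)=0$ and $D\rho(0)=0$, so that $\rho(x')\le C|x'|^2$ and $|D\rho(x')|\le C|x'|$. Since $\mathcal{T}_h=\mathcal{T}_2\circ\mathcal{T}_1$ acts diagonally as $(1/q_1,\mathcal{T}_1'',1/d_n)$, the boundary of $\Omega_h$ near the origin is the graph
\[
  \rho_h(\tilde x_1,\tilde x'')\;=\;\frac{1}{d_n}\,\rho\bigl(q_1\tilde x_1,(\mathcal{T}_1'')^{-1}\tilde x''\bigr).
\]
Showing $\rho_h(\tilde x_1,\tilde x'')-\rho_h(\tilde x_1,0)\to 0$ pointwise will immediately give $\rho_0(\tilde x_1,\tilde x'')=\bar\rho_0(\tilde x_1)$, hence $\Omega_0=\{\tilde x_n>\bar\rho_0(\tilde x_1)\}=\mathbb{R}^{n-2}\times\omega_0$, with $\omega_0=\{(\tilde x_1,\tilde x_n):\tilde x_n>\bar\rho_0(\tilde x_1)\}$ convex since $\bar\rho_0$ is convex.

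The key quantitative input comes from the size estimates of the sub-level sets. Applying Lemma \ref{lem4.6} to the dual potential $v$ together with the duality in Corollary \ref{cdual-1} (using tangential $C^{1,1-\epsilon}$ regularity of $v$ in the plane $\{y_1=0\}$), one obtains $|x_i|\le Ch^{1/2-\epsilon}$ for every $x\in S^c_h[u]$ and $i=2,\ldots,n$. In particular $d_n\le Ch^{1/2-\epsilon}$ and the semi-axes of $E_h'$ are bounded by $Ch^{1/2-\epsilon}$, so $|(\mathcal{T}_1'')^{-1}\tilde x''|\le Ch^{1/2-\epsilon}|\tilde x''|$. Combining the volume identity $|S^c_h|\approx h^{n/2}$ with $|E_h'|\approx h^{(n-2)/2}$ up to an $h^{O(\epsilon)}$ factor yields $q_1 d_n\approx h^{1+O(\epsilon)}$.

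For $\tilde x_1,\tilde x''$ in any bounded set and $h$ small, I apply the $C^{1,1}$ Lipschitz bound on $D\rho$ together with $D\rho(0)=0$:
\[
  \bigl|\rho\bigl(q_1\tilde x_1,(\mathcal{T}_1'')^{-1}\tilde x''\bigr)-\rho(q_1\tilde x_1,0)\bigr|\;\le\;C\,h^{1/2-\epsilon}\bigl(q_1+h^{1/2-\epsilon}\bigr).
\]
Dividing by $d_n$ and using $q_1 d_n\approx h$, the right-hand side is controlled by $C\bigl(q_1^{2}h^{-1/2-\epsilon}+q_1 h^{-2\epsilon}\bigr)$. The second term tends to 0 since $q_1\to 0$ faster than any $h^{2\epsilon}$ (for $\epsilon$ small) using the upper bound $q_1\le Ch^{\delta/(1+\delta)}$ which follows from the global $C^{1,\delta}$ regularity of $v$ at the origin together with the duality \eqref{inn}. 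Passing to the limit $h\to 0$ along the chosen subsequence shows $\rho_0(\tilde x_1,\tilde x'')=\rho_0(\tilde x_1,0)$ for every $\tilde x''\in\mathbb{R}^{n-2}$, which finishes the proof.

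The main obstacle is the first term $q_1^{2}h^{-1/2-\epsilon}$: making it vanish in the limit requires squeezing $q_1$ via the interplay of the tangential $C^{1,\alpha}$ regularity of $u$ (which, by duality, controls $|y_1|$ on $S^c_h[v]$) and of $v$ (which controls all $|x_i|$ with $i\ne 1$ on $S^c_h[u]$), together with the careful choice of blow-up subsequence in \eqref{newcho} which, via the decay estimates \eqref{keyh}, forces $q_1\lesssim h^{1/(3-\epsilon)}$ to the extent needed to close the estimate.
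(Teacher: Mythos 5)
The computation is set up correctly up to the point where you divide by $d_n$, but the way you propose to close the estimate does not work. Your Lipschitz bound (using $D\rho(0)=0$) correctly gives
\[
  \frac{\bigl|\rho\bigl(q_1\tilde x_1,(\mathcal{T}_1'')^{-1}\tilde x''\bigr)-\rho(q_1\tilde x_1,0)\bigr|}{d_n}
  \;\le\; C\bigl(q_1^{2}h^{-1/2-\epsilon}+q_1 h^{-2\epsilon}\bigr),
\]
and the second term is harmless. The problem is the first term. You claim it is controlled because the decay estimate \eqref{keyh} and the blow-up choice \eqref{newcho} ``force $q_1\lesssim h^{1/(3-\epsilon)}$''. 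That inequality is backwards: at the rightmost point of $S_h[u]$ one has $\underline{u}(q_1)\ge h$, so $\underline{u}(t)\le Ct^{3-\epsilon}$ and Lemma~\ref{co61} yield the \emph{lower} bound $q_1\gtrsim h^{1/3+\epsilon}$, not an upper bound. The only available upper bound is $q_1\le C\,h/d_n\le C\,h^{\delta/(1+\delta)}$, where $\delta$ is the (possibly very small) exponent from the global $C^{1,\delta}$ regularity of \cite{C92}. Plugging this in gives $q_1^{2}h^{-1/2-\epsilon}\lesssim h^{2\delta/(1+\delta)-1/2-\epsilon}$, whose exponent is negative unless $\delta>1/3$ (for small $\epsilon$), which is not guaranteed. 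So the step ``passing to the limit $h\to 0$'' is not justified for general $\delta$.

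The paper avoids this entirely by never trying to prove flatness uniformly in $\tilde x_1$. It only establishes flatness of $\partial\Omega_h$ in the slice $\{x_1=0\}$, and does so by a two-scale comparison: it shows the boundary height $d'_{n;h^{1-\delta/2}}=\sup\{e_n\cdot x:\ x\in S^c_{h^{1-\delta/2};1}\cap\pom\}\le C h^{(1-\delta/2)(1-2\epsilon)}$ is $o(d_{n;h})$, while $\text{diam}(S^c_{h^{1-\delta/2};1})$ is $\gg\text{diam}(S^c_{h;1})$, and picking $\epsilon<\delta/4$ makes the relevant exponent positive for every $\delta>0$. Then Lemma~\ref{god1} follows from convexity (a convex set containing a line is a product). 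Your argument can actually be rescued in the same spirit: set $\tilde x_1=0$, which kills the $q_1^2 h^{-1/2-\epsilon}$ term, and one gets $|\rho_h(0,\tilde x'')|\le C|\tilde x''|^2 h^{\delta/(1+\delta)-2\epsilon}\to0$ on any bounded set of $\tilde x''$; combined with the product-structure fact this would give the lemma. But the claim that $\rho_h(\tilde x_1,\tilde x'')-\rho_h(\tilde x_1,0)\to0$ for all $\tilde x_1$, as you have written it, is not proved and is probably false without a stronger lower bound on $\delta$.
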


\begin{proof}
By the global $C^{1,\delta}$ regularity \cite {C92}, 
we have $d_n\ge C h^{\frac{1}{1+\delta}}$ for some $\delta>0$, where $d_n=d_{n}(h)$ is given in \eqref{defq1} and \eqref{n101}.
By \eqref{n003}, we have 
$$S^c_{h;1,0}[u]=S^c_{h; 1}[u]\cap\{x_n=0\}\subset B'_{Ch^{1/2-\eps}}(0).$$ 
Hence by the $C^{1,1}$ regularity of the boundary $\pom$,
the height of $S^c_{h;1}[u]\cap \pom$ satisfies
$$d_{n,b}(h)=: \sup\{e_n\cdot x:\ x\in S^c_{h; 1}[u]\cap\pom\}\le Ch^{1-2\eps},$$
where $\eps>0$ is fixed but can be as small as we want. 
Hence, 
$$\frac {d_{n,b}(h^{1- \delta/2})}{d_{n}(h)}\to 0\ \ \ \text{as}\  h\to 0. $$
Note that by \eqref{n003}, 
$$\text{diam}(S^c_{h; 1,0}[u])= o(1) \text{diam}(S^c_{h^{1-\delta/2 }; 1,0}[u])\quad\text{ when } h\to0. $$
The above formula implies that  
$\mathcal{T}_h(\pom\cap S^c_{h^{1-\delta/2 }; 1}[u])$ is becoming flat and so its limit is the 
plane $\text{span}(e_2, \cdots, e_{n-1})$.
Namely $\mathcal{T}_h(\pom\cap B_R(0)\cap\{x_1=0\})$ becomes flat as $h\to 0$, $\forall\, R>0$.

It is well known that if a convex set $G$ contains a straight line $\ell$, then $G$ can be expressed as a product 
$G=G'\times \ell$. The lemma is proved.
\end{proof}

Denote $\mathcal T^*=\frac{1}{h}(\mathcal T')^{-1}$, the dual affine transformation for $v$, {where $\mathcal T'$ is the transpose of $\mathcal T$}.
Similarly, we denote $v_h(y)=\frac 1h v((\mathcal T^*)^{-1}y)$, and $\Omega^*_h=\mathcal T^*(\Om^*)$.
Applying Lemma \ref{god1} to $v$, 
we see that $\Omega^*_h$ converges to an unbounded convex domain $\Omega_0^*=\R^{n-2}\times \omega^*_0$, 
where $\omega^*_0\subset span\{e_1, e_n\}$ is a convex set. 
And $v_{h_j}$ converges to a convex function $v_0$ locally uniformly, which satisfies the equation \eqref{eq000} correspondingly.

\subsection{Smooth approximation}\label{hdsm}

First we construct a smooth approximation sequence $\{u_k\}$ converging to $u_0$ in a small neighbourhood of the origin similarly as in \S\ref{ns43}.

Let $V=B_r\cap \Omega_0^*$ for a small constant $r,$  and let  $U=Dv_0(V).$ 
Then $B_{r_0}\cap\Omega_0\subset U$ for a small constant $r_0>0$. 
By Lemma \ref{god1} we approximate  $U, V$
by a sequence of bounded smooth  sets $U_k, U_k^*$  respectively such that
$$ {\begin{split}
  a)\ \ & 0\in \p U_k, \ \  0\in \p U^*_k \ \ \forall\ k\ge 1,  \\
  b)\ \  & U_k\subset \{x_n>0\}, \ \ U^*_k\subset \{y_1>0\} \ \ \forall\ k\ge 1, \\
  c)\ \  & U_k \to U, \ \   U^*_k \to V, \ \ \text{as $k\to\infty$, in Hausdorff's sense},\\
  d)\ \ & \exists \text{ smooth, uniformly convex sets } \hat\omega_k, \hat\omega_k^* \subset \text{span}\{e_1, e_n\} \text{ such that} \\
\ \ & \ \ \ \qquad U_k \cap B_{r_0} =(\hat\omega_k \times \mathbb{R}^{n-2})\cap  B_{r_0}, \\ 
\ \ & \ \ \ \qquad U^*_k  \cap B_{r_0} = (\hat\omega_k^*\times \mathbb{R}^{n-2})\cap  B_{r_0} \\
\ \ & \text{ for a different, smaller constant $r_0>0$. } 
  \end{split}} $$
In this subsection, the constant $r_0$ may change from line to line but they have a uniform positive lower bound independent of $k.$

Moreover, we assume the lower part of the boundary $\p U_k\cap B_{r_0}$ is the graph of a smooth, uniformly convex function $\rho_k$ in direction $e_n$, that is
\beq\label{hdrhok}
\Gamma_k =: \{x\in\R^n : x_n=\rho_k(x_1)\} \cap B_{r_0}, 
\eeq
where $\rho_k$ is defined on $\mathcal{J}_k$ that is the projection of $\p U_k\cap B_{r_0}$ on the $x_1$-axis with $[0,\frac12r_0)\subset\mathcal{J}_k$.
The function $\rho_k$ satisfies \eqref{rhok1} as in dimension two. 

Similarly, the left part of the boundary $\p U^*_k\cap B_{r_0}$ is the graph of a smooth, uniformly convex function $\rho^*_k$ in direction $e_1$, that is
\beq\label{hdrhok*}
\Gamma^*_k =: \{y\in\R^n : y_1=\rho_k^*(y_n)\} \cap B_{r_0}, 
\eeq
where $\rho^*_k$ is defined on $\mathcal{J}^*_k$ that is the projection of $\p U^*_k\cap B_{r_0}$ on the $y_n$-axis containing the origin inside.
The function $\rho^*_k$ satisfies \eqref{rhok111} as in dimension two. 

Let $u_k$  be the potential function for the optimal transport from $(U_k, 1)$ to $(U^*_k, g_k)$, 
where the density $g_k=\frac{|U_k|}{|U^*_k|}$ is a constant. 
Subtracting a constant we have $u_k(0)=0$. 
Since $U^*_k$ is convex, as before we can extend  $u_k$ to $\mathbb{R}^n$ by
$$u_k(x):=\sup\{\ell(x) :\ \ell \text{ is affine, } \ell\leq u_k\ \text{in}\ U_k,\ \nabla\ell\in U_k^*\} \quad \mbox{for }x\in\mathbb{R}^n. $$
Since $u_0\in C^1(\R^n)$ and $u_0(0)=0$, by the uniqueness of potential functions,
$u_k \to u_0$ uniformly in $B_{r_0}(0)$ for a different $r_0>0$ small.  
In addition we have $\|u_k-u_0\|_{C^1(B_{r_0/2})} \to0$  as  $k\to\infty.$ 

\begin{lemma}\label{high smooth new}
\begin{itemize}
\item[$(i)$] For each $k\geq1$, we have 
	\begin{equation}\label{11pp}
		\nu_k(x) \cdot \nu_k^*(Du_k(x)) > 0 \quad \forall\, x\in \Gamma_k,
	\end{equation}
where $\nu_k$ and $\nu^*_k$ are the unit inner normals of the domains $\{x\in\R^n : x_n>\rho_k(x_1)\}\cap B_{r_0}$ and $\{y\in\R^n : y_1>\rho^*_k(y_n)\}\cap B_{r_0}$, respectively. 

\item[$(ii)$] For each $k\geq1$, $u_k$ is smooth, locally uniformly convex, and $\det\,D^2u_k$ is a positive constant in $B_{r_0}(0)\cap \{x_n>\rho_k(x_1)\}$ (up to the boundary $\Gamma_k$).
\end{itemize}
\end{lemma} 

\begin{proof}
Similarly as in Lemma \ref{2dapp}, $(ii)$ follows from $(i)$. 
That is, if the obliqueness \eqref{11pp} holds, by \S\ref{s555} we have the smoothness of $u_k$ in $(ii)$.
The proof of $(i)$ will be given in the following two lemmas.  
\end{proof}

 \begin{lemma}\label{sp111}
For any fixed $k\geq1$, assume that $u_k(0)=0$ and $D u_k(0)=0$. 
Then for any $x=(t, x'', \rho_k(t))\in \Gamma_k$ with $t \leq |x''|^{2/3}$,  we have
\beq\label{n470}
 u_k(x)\approx |x''|^2.
\eeq 
\end{lemma}

\begin{proof}
Since the boundaries $\Gamma_k, \Gamma^*_k$ are flat in $x''=(x_2,\cdots,x_{n-1})$, and $\rho_k, \rho^*_k$ are smooth and uniformly convex, we can choose $\eps=0$ in Lemmas \ref{lem4.6} and \ref{co61} (similarly as in Corollary \ref{newcoro}).
From \eqref{mmeq}, we have
	\beq\label{uuin} 
		C^{-1}Q \cap U_k \subset S_h[u_k] \subset CQ \quad\text{ with }\ Q:=[-\tilde q_1, \tilde q_1]\times B_{h^{1/2}}(0) \times [-d_n,d_n], 
	\eeq
where $\tilde q, d_n$ are defined in \eqref{defq1}, \eqref{defq2} respectively. 
Similarly to \eqref{dqq}, let $q, \xi\in\p S_h[u_k]$ be the points on $\p S_h[u_k]$ such that   
\begin{align*}
	q_1 &= \langle q, e_1\rangle = \sup\{\langle x, e_1 \rangle : x\in S_h[u_k]\}, \\
	\xi_n &= \langle \xi, e_n\rangle = \sup\{\langle x, e_n \rangle : x\in S_h[u_k]\}.
\end{align*}  
Since $Du_k(U_k)\subset U^*_k\subset\{y_1\geq0\}$, $u_k$ is increasing in $e_1$ direction. Hence $\xi$ can be chosen on $\Gamma_k$. 
Then by \eqref{uuin}, $C^{-1}\tilde q_1\le q_1\le C\tilde q_1$ (see also Remark \ref{cwidth}) and $C^{-1}d_n<\xi_n\le Cd_n$.
Since $\rho_k\in C^2$, we have $\xi_n\leq C_1\xi_1^2$.
By the uniformly convexity of $\rho_k$, we have $q_n\geq C_2q_1^2$.
By Corollary \ref{rl42}, we then obtain
	$$ \tilde C_1\tilde q_1^2  \geq \xi_n \geq q_n \geq \tilde C_2 \tilde q_1^2,\quad \text{ thus } d_n \approx \tilde q_1^2. $$
By the fact that $|S_h[u_k]|\approx h^{n/2}$, we then have $\tilde q_1\approx h^{1/3}$.
Hence, when $x=(t, x'', \rho_k(t))\in \Gamma_k$ with $t \leq |x''|^{2/3}$, we obtain \eqref{n470}. 
\end{proof}

\begin{remark}\label{hdCaff}
In dimension two, we can use Caffarelli's regularity to conclude that $u_k$ is $C^{2,\alpha}$ smooth up to $\Gamma_k$ (Remark \ref{2dCaff}).
In high dimensions, for the proof of Lemma \ref{lem obli} in \S\ref{sss53}, 
we have to choose the domains $U_k, U^*_k$ which are flat in $e_2,\cdots,e_{n-1}$ directions.
Hence we cannot use Caffarelli's boundary $C^{2,\alpha}$ regularity \cite{C96} directly.
But with the help of Lemma \ref{sp111}, 
one can modify Caffarelli's argument to prove that $u_k$ is smooth up to $\Gamma_k$.
In fact, Lemma \ref{sp111} implies that the solution $u_k$ (for any fixed $k$) behaves nicely in $x''$,
and so the directions $x''$ wouldn't cause us new troubles. 
Here we will not use the argument in \cite{C96} but provide an independent proof of \eqref{11pp}, based on Lemma \ref{1p}. 
\end{remark}

\begin{lemma}\label{ndob new}
For any fixed $k\geq 1$, \eqref{11pp} holds. 
\end{lemma}

\begin{proof} 
Suppose to the contrary that \eqref{11pp} fails at a point $\hat x\in \Gamma_k$, that is
$$ \nu_k(\hat x)\cdot \nu^*_k(D u_k(\hat x))=0. $$
By a change of coordinates and subtracting a linear function,
we can assume $\hat x=0$, $u_k(0)=0$ and $D u_k(0)=0$ such that the hypotheses of Lemma \ref{sp111} are satisfied. 

Consider the restriction of $\p U_k$ in $\text{span}\{e_1, e_n\}$. 
For a boundary point $p=(t,0,\cdots,0, \rho_k(t))\in \Gamma_k$, let $h=u_k(p)$.
Denote by $\tilde\eta$ the unit inner normal of $S_{h}[u_k]$ at $p,$ and $\eta$ the projection of 
$\tilde\eta$ on $\text{span}\{e_1, e_n\}$. 
Denote by $\nu$ the unit inner normal of $\partial U_k$ at $p,$ 
and $\alpha$ the angle between $\eta$ and $\nu$ (see Figure 3). 
Note that by $d)$ in our domain constructions, $\p U_k, \p U^*_k$ are flat along $e_2,\cdots,e_{n-1}$ directions near the origin. Hence the normal vectors $\nu(p), \nu^*(p^*)$ and the tangential vectors $\zeta(p), \zeta^*(p^*)$ are all in the 2-dim plane $\text{span}\{e_1, e_n\}$, where $p^*=Du_k(p)$.  
By the strict convexity of $u_k$ and the proof of Lemma \ref{1p}, 
there exists a small $t_0>0$ such that $\alpha \ge 0$ 
at $p_0=(t_0,0, \rho_k(t_0))$, which implies the obliqueness holds at $p_0$. 
Hence by the $C^{1,\delta}$ regularity,  there is a small constant $\epsilon_0>0$ such that
	\begin{equation*}
		\nu_k(p)\cdot \nu^*_k(Du_k(p))>0,\ \ \ \forall \, p = (t, p'', \rho_k(t))\ \mbox{ with } t\in (t_0-\epsilon_0, t_0] \mbox{ and } |p''| \leq \epsilon_0.
	\end{equation*}

For any $t\in(0,t_0)$, denote
$$\mathcal{C}_t = \left\{ (x_1, x'',0) :  t<x_1<t_0,  \ |x''| < \epsilon_0(x_1-t) \right\},$$ 	
which is an $(n-1)$-dimensional round open cone in the hyperplane $\{x_n=0\}$ with vertex at $(t,0,0)$ and base on the disk 
$\{(t_0, x'', 0) : |x''| \leq \epsilon_0(t_0-t)\}$.

Let 
$$\tilde t=\inf\left\{ t:\ \text{the obliqueness holds } \forall\, p\in \Gamma_k, \text{ provided } (p-p_ne_n)\in\mathcal{C}_t \right\} , $$ 
where $p-p_ne_n$ is the projection of $p$ on the plane $\{x_n=0\}$.
Obviously $\tilde t\geq 0$, and there is a point $(\tilde{x}_1, \tilde{x}'',0)\in \partial \mathcal{C}_{\tilde t}$,
with $\tilde x_1<t_0$,
such that the obliqueness fails at $\tilde x = (\tilde{x}_1, \tilde{x}'',  \rho_k(\tilde{x}_1))$ 
but it holds in $\{(x_1, x'', \rho_k(x_1)):\ (x_1, x'', 0)\in \mathcal C_{\tilde t}\}$.

Therefore by a change of coordinates, we can assume that the obliqueness fails at the origin but it holds 
for all $x\in\Gamma_k$ whose projection $(x-x_ne_n)\in {\mathcal C}_0$, where
${\mathcal C}_0 ={\mathcal C}_{t\,|\,  t=0}$ was the cone defined above.
By subtracting a linear function, we again have $u_k(0)=0$, $Du_k(0)=0$.

Now, we introduce the auxiliary function  
\beq\label{new-w}
w=\partial_1 u_k+K(u_k-\frac{n}{2}x_1\partial_1 u_k) , 
\eeq
where $K$ is a large constant to be determined. 
Let $\underline{w}$ be the function given by
	\begin{equation}\label{new infi} 
		\underline{w}(t) = \inf\{ w(t, x_2, \cdots, x_n) :  (t, x_2, \cdots, x_n) \in  U_k\cap B_{r_0} \}
	\end{equation}
for $t>0$ small. 

We \textit{claim} that the infimum in \eqref{new infi} cannot be attained on the boundary $\p(U_k\cap B_{r_0})$ for $t>0$ small. 
Indeed, as in the proof of Corollary \ref{auco}, there exists a small constant $\tau_0>0$ 
such that for $t\in(0,\tau_0)$, the infimum in \eqref{new infi} cannot be attained on $U_k\cap \p B_{r_0}$.
Hence, it suffices to prove the claim over the part $\Gamma_k=\p U_k\cap B_{r_0}$. 
For any given $0<t<\min\{\tau_0, t_0\}$, denote
	$$\Gamma_k\cap\{x_1=t\} = \partial_{in}(t) \cup \partial_{out}(t), $$
where $\partial_{in}(t)$ denotes the boundary points $x\in\Gamma_k$ whose projection $(x-x_ne_n)\in\mathcal{C}_0$, 
while $\partial_{out}(t)=\Gamma_k\cap\{x_1=t\} - \partial_{in}(t)$.

By our choice of the cone $\mathcal C_0$, 
the obliqueness holds at all points $x\in\partial_{in}(t)$.
Hence $u_k$ is smooth up to the boundary $\partial_{in}(t)$.
Similarly to Lemma \ref{u12}, we then infer that $\partial_{1n} u_k<0$ and $\partial_n w<0$ on $\partial_{in}(t)$.
Hence the infimum in \eqref{new infi}  cannot be attained on $\partial_{in}(t)$. 

Next we show that the infimum cannot be attained on $\partial_{out}(t)$ either. 
On the one hand, since $U_k, U^*_k$ satisfy the condition \eqref{hdom} of \S\ref{sn51}, 
by Corollary \ref{keyco} and \eqref{n470}, when $t>0$ sufficiently small, we have 
$$\underline{u_k}(t) = \inf\{u_k(t, x_2, \cdots, x_n) :  (t, x_2, \cdots, x_n) \in  U_k \cap B_{r_0} \} \le Ct^3.$$ 
Note that due to the flatness of $\p U_k, \p U^*_k$ in $e_2,\cdots,e_{n-1}$ directions, 
we can choose $\eps=0$ in \eqref{keyh}. 
Hence  similarly to \eqref{2dbb} we obtain 
\beq\label{n480}
\underline{w}(t) \le Ct^2+CKt^3.
\eeq
On the other hand, for any point $x=(t,x'',\rho_k(t))\in\partial_{out}(t)$, we have  $|x''|>\eps_0 t$.
Hence by \eqref{n470}, we have $u_k(x)\ge c_0|x''|^2 > c_0\eps_0^2 t^2$. Since $\partial_1u_k\ge 0$, we then obtain that for $t<2(nK)^{-1}$ small,
\beq\label{n490}
w(x) >  K u_k(x) > Kc_0\eps_0^2 t^2\quad \ \forall \, x\in\partial_{out}(t).
\eeq
Therefore, by choosing $K$ sufficiently large, from \eqref{n480} and \eqref{n490} 
one can see that $w(x) > \underline{w}(t)$ for all $x\in\partial_{out}(t)$,
namely the infimum in \eqref{new infi} cannot be attained on $\partial_{out}(t)$. 

Once the claim is proved, 
we can show that $\underline{w}$ {is} concave and reach a contradiction by a similar argument as in \S\ref{ns44}. 
The proof of Lemma \ref{ndob new} is finished. 
\end{proof}

With the preparations in \S\ref{sn51} and \S\ref{hdsm}, we are now in position to prove Lemma \ref{lem obli}. 
 
\subsection{Proof of Lemma \ref{lem obli}}\label{sss53}

By our construction, $0\in\Gamma_k$, $0\in\Gamma^*_k$, and \eqref{rhok1}, \eqref{rhok111} hold for $\rho_k, \rho^*_k$, respectively. 
Similarly to \eqref{1side} we have
\beq \label{n1side}
\partial_{x_n}u_k(t,x'',\rho_k(t))<0 \ \ \ \text{for}\ \ (t,x'',\rho_k(t))\in \Gamma_k\cap\{x_1>0\} \ \ \text{near the origin}.
\eeq

Now, we can prove Lemma \ref{lem obli} in a similar way as in \S\ref{s4}, which is outlined as follows:
\begin{itemize}
\item [$(i)$] 
By the computation as in Lemma \ref{u12}, we have 
\begin{equation}\label{key3h}
		\partial_{_{x_1x_n}}u_k(x)<0\ \ \ \ 
\forall\ x\in \partial U_k \cap\{x\in B_{r_0}: x_1>0\}. 	
\end{equation}

\item [$(ii)$]	Define the auxiliary function
	\begin{equation*} 
	 w_k(x) :=\partial_{_{x_1}} u_k + u_k - \frac{n}{2}x_1 \partial_{_{x_1}}u_k  
	\end{equation*}
that satisfies
	\begin{equation*} 
		M^{ij}D_{ij}w_k=0
	\end{equation*}
in $B_{r_0}\cap {U_k}$, where $\{M^{ij}\}$ is the cofactor matrix of $D^2u_k$. 

\item [$(iii)$]
By \eqref{key3h}, similarly to Corollary \ref{auco},
we see that there exists a constant $\epsilon_0$ independent of $k,$ such that
for any given $t\in(0,\epsilon_0)$,
the function $w_k(t, \cdot)$ has an interior local minimum.
Hence we can define 
	\begin{equation}\label{infi}
		\underline{w_k}(t) = \inf\{ w_k(t, x_2, \cdots, x_n) :  (t, x_2, \cdots, x_n) \in U_k \}  
	\end{equation}
for $t\in(0,\epsilon_0)$.
Note that by \eqref{key3h}, the infimum cannot be attained on $\partial U_k\cap B_{\epsilon_0}$.

\item [$(iv)$] Similarly to Lemma \ref{leco}, we can prove that
$\underline{w_k}$ is concave in $(0, \epsilon_0).$

\item [$(v)$] By letting $k\to\infty$, we have now obtained the function $\underline {w_0}$ that satisfies 
\begin{itemize} 
\item [$a)$] $\underline {w_0}\ge 0$, and $\underline {w_0}(t)\to0$ as $t\to0$.
\item [$b)$]   $\underline {w_0}$ is concave.
\item [$c)$]   $\underline{w_0} (t) \leq Ct^{2-\eps}\ \ \text{ for $t>0$ small}$ \ \ (by \eqref{keyh} that also holds for $u_0$ with the same constants).
\end{itemize}
Therefore $\underline {w_0}\equiv 0$, and we reach a contradiction analogous to that of dimension two. 
This completes the proof of Lemma \ref{lem obli}.
\end{itemize}
\qed

\section{Proof of Theorems \ref{main} and \ref{main01} }\label{s555}

In this section we prove Theorems \ref{main} and \ref{main01}, 
namely the  global $C^{2,\alpha}$ and $W^{2,p}$ estimates for the problem \eqref{MA1}, \eqref{bdry}.
In \cite {C96}, Caffarelli established the global $C^{2,\alpha'}$ estimate for some $\alpha'\in (0, \alpha)$.
The exponent  $\alpha'$ can be improved to  $ \alpha$, 
using the global $C^{2,\alpha}$ estimate for the Dirichlet problem in \cite {TW08a, Sa}. 
Here we give a direct proof.
We also obtain the continuity of $D^2 u$ for Dini continuous and positive $f$. 
 
Assume that $0\in\partial\Omega$, $u(0)=0$ and $Du(0)=0\in\partial\Omega^*$. 
By the uniform obliqueness \eqref{sobli} and a linear transform of the coordinates, 
we may assume that locally  
	\begin{align*}
		\partial\Omega &= \{ x_n= \rho(x') \}, \\ 
		\partial\Omega^* &= \{ y_n= \rho^*(y')\},
	\end{align*}
where $\rho, \rho^*\in C^{1,1}$ satisfying $\rho, \rho^* \geq 0$ and $\rho(0)=\rho^*(0)=0$. 
Note that this expression implies that $u_{x_n}>0$ in $\Om$.

Extending $u$ to $\R^n$ as at the beginning of \S\ref{s2}. Denote
	\begin{equation}\label{newD}
		D^+_{h, a} = \{x\in \R^n:\ u (x)< h\}\cap\{x_n > a\},
	\end{equation}
where $a\geq0$ is a small constant. 
Let $a_h$ be the smallest number such that $D_{h, a_h}^+\subset\Omega$, but $D_{h, a_h-\varepsilon}^+\nsubseteq \Omega$ for any $\varepsilon>0$. One can see that $a_h\to0$ as $h\to0$.
For simplicity, we denote $D^+_{h, a_h}$ by $D^+_h$.

Let $D^-_h$ be the reflection of $D^+_h$ with respect to the plane $\{x_n=a_h\}$, 
and $D_h:=D^+_h\cup D^-_h$.
Since $D_nu \ge 0$, the domain $D_h$ is convex. Moreover, $D_h$ shrinks to the origin as $h\to0$. 

\begin{lemma}\label{newgeo}
The shape of $D_h$ is close to a ball of radius $h^{1/2}$, in the sense that 
	\begin{equation}\label{newinc}
		B_{C^{-1}h^{\frac12+\epsilon}}(x_h) \subset D_h \subset B_{Ch^{\frac12-\epsilon}}(x_h)
	\end{equation}
for any given small $\epsilon>0$, where the centre $x_h=a_he_n$.
\end{lemma}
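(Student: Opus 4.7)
The plan is to establish both inclusions in \eqref{newinc} by combining the uniform obliqueness of Lemma \ref{lem obli} (which justifies the coordinate set-up $\partial\Omega = \{x_n = \rho(x')\}$, $\partial\Omega^* = \{y_n = \rho^*(y')\}$ with $\rho,\rho^* \in C^{1,1}$ and $\rho(0) = \rho^*(0) = 0$) with the tangential $C^{1,\alpha}$ estimate (Lemma \ref{treg}), the uniform density $|S_h^c[u](0)| \approx h^{n/2}$ from Section \ref{s2}, the good-shape property (Lemma \ref{Lemma2.2}), the equivalence of sub-level sets (Lemma \ref{rela1}), and the duality bound of Corollary \ref{cdual-1}.

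First, I would bound $a_h$. By definition, $\partial\{u<h\}$ meets $\partial\Omega$ at a point $(x'_\ast, \rho(x'_\ast))$ with $x_n$-coordinate equal to $a_h$. Since $\rho \in C^{1,1}$ gives $\rho(x') \le C|x'|^2$, and since (as shown below) $|x'| \le Ch^{1/2-\epsilon}$ for every $x \in \{u<h\}$, we obtain $a_h \le Ch^{1-2\epsilon} = o(h^{1/2-\epsilon})$. Thus $x_h = a_he_n$ is negligible on the scale $h^{1/2}$, so that up to enlarging $C$ the inclusions \eqref{newinc} are equivalent to the analogous inclusions centred at the origin.

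Second, I would obtain the tangential estimates on $S_h^c[u](0)$. The \emph{lower} tangential bound
\[ S_h^c[u](0)\cap\{x_n=0\} \supset B'_{C^{-1}h^{1/2+\epsilon}}(0) \]
follows directly from Lemma \ref{treg} applied with $\alpha = 1-2\epsilon$. For the \emph{upper} tangential bound, I would apply Lemma \ref{treg} to the dual potential $v$ in each tangential direction $e_i$ for $i = 1,\dots,n-1$, obtaining $h^{1/2+\epsilon}e_i \in S_h^c[v](0)$; Corollary \ref{cdual-1} then yields $|x\cdot e_i| \le Ch^{1/2-\epsilon}$ for all $x \in S_h^c[u](0)$ and every $i < n$.

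To control the normal direction $e_n$, I would combine the good-shape property of $S_h^c[u](0)$ (there is an ellipsoid $E$ with $E \subset S_h^c \subset nE$) with the volume identity $|S_h^c| \approx h^{n/2}$. The $n-1$ tangential bounds constrain $n-1$ of the principal semi-axes of $E$ to lie in $[C^{-1}h^{1/2+\epsilon}, Ch^{1/2-\epsilon}]$; the remaining semi-axis, essentially aligned with $e_n$ by the coordinate set-up induced by obliqueness, is then pinned to the same range by $\prod_i\sigma_i \approx h^{n/2}$. Combining this normal bound with the tangential ones, $S_h^c[u](0)$ is squeezed between $B_{C^{-1}h^{1/2+\epsilon}}(0)$ and $B_{Ch^{1/2-\epsilon}}(0)$; Lemma \ref{rela1} transfers these bounds to $S_h[u](0)\supset D_h^+$, and the reflection across $\{x_n=a_h\}$, negligible by the first step, produces \eqref{newinc}.

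The main obstacle is the normal direction: Lemma \ref{treg} furnishes no tangential-type estimate along $e_n$, and the duality in Corollary \ref{cdual-1} is unavailable there either (since $e_n$ is the normal to $\partial\Omega^*$, so the hypothesis of Lemma \ref{treg} for $v$ fails along $e_n$). It is only the coupling of the good-shape ellipsoid with the sharp volume equivalence $|S_h^c| \approx h^{n/2}$, together with the fact that the obliqueness forces the principal frame of that ellipsoid to be compatible with the coordinate frame, that pins down the normal extent and rules out degenerate elongation transverse to the tangent plane.
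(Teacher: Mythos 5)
Your tangential bounds on $S^c_h[u]$ and your treatment of $a_h$ match the paper's strategy, but the normal direction -- which you correctly flag as the crux -- is where your argument has a real gap, and where you have misread what duality can deliver.

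You claim that $n-1$ of the principal semi-axes of the John ellipsoid $E$ of $S^c_h[u]$ are pinned to $[C^{-1}h^{1/2+\epsilon},\,Ch^{1/2-\epsilon}]$ by the coordinate-direction bounds $|x\cdot e_i|\lesssim h^{1/2-\epsilon}$ and the lower slice inclusion, and that the remaining semi-axis is ``essentially aligned with $e_n$.'' Neither assertion is justified: a bound on the extent of $E$ along each $e_i$, $i<n$, does not by itself constrain the principal semi-axes, because $E$ can be tilted. One can write down tilted ellipsoids with $\prod\sigma_i\approx h^{n/2}$ satisfying the coordinate bounds along $e_1,\dots,e_{n-1}$ yet with $e_n$-extent $\approx h^{1/2-\kappa}$ for $\kappa$ several times larger than $\epsilon$; ruling these out requires a careful quantitative interplay of the upper and lower slice bounds, which you do not carry out. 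The ``alignment induced by obliqueness'' is exactly what needs proving.

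You also assert that duality is unavailable in the normal direction because Lemma \ref{treg} does not give a tangential estimate for $v$ along $e_n$. That premise is correct but the conclusion is not: the paper never invokes Lemma \ref{treg} along $e_n$. Instead it first derives the \emph{lower} bound $r_n\ge Ch^{1/2+\epsilon}$ on the $e_n$-extent of $S^c_h[u]$ purely from the volume identity $\Vol(S^c_h[u])\approx h^{n/2}$ combined with the tangential upper bound $|S'_h[u]|\le Ch^{(n-1)(1/2-\epsilon)}$ (using convexity and $u_n\ge 0$). By symmetry this same volume argument gives $r_n^*\ge Ch^{1/2+\epsilon}$ for $S^c_h[v]$. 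Then Corollary \ref{cdual-1}, applied with the point $r_n^*e_n\in S^c_h[v]$, immediately yields the desired \emph{upper} bound $|x\cdot e_n|\le Ch/r_n^*\le Ch^{1/2-\epsilon}$ for all $x\in S^c_h[u]$. This is a clean, direct route that avoids any analysis of the John ellipsoid's principal frame. I'd recommend replacing your shape-pinning step with this two-sided use of the volume identity followed by duality.

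A smaller issue: the passage from bounds on $S^c_h[u]$ to bounds on $D_h$ is not simply ``Lemma \ref{rela1} transfers.'' Since $D^+_{h,0}=\{u<h\}\cap\{x_n>0\}$ need not lie inside $\Omega$, the paper proves the two inclusions $S^{c,+}_{b_1h}\subset D^+_{h,0}\subset S^{c,+}_{b_2h}$ separately, the first by estimating $u\ge Ch$ on $\partial S^c_h\cap\{x_n=0\}$ via the $|Du|$ bound from \eqref{cball} and the $C^{1,1}$ regularity of $\partial\Omega$. Your write-up should address this before invoking reflection.
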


\begin{proof}
First we show the centred sub-level set $S^c_h[u]$ is close to a ball of radius $h^{1/2}$, namely
	\begin{equation}\label{cball}
		B_{C^{-1}h^{\frac12+\epsilon}}(0) \subset S^c_h[u] \subset B_{Ch^{\frac12-\epsilon}}(0)
	\end{equation}
for any small $\epsilon>0$. 
Indeed, from Lemma \ref{treg},
	\begin{equation}\label{newinc1}
		B_{C^{-1}h^{\frac12+\epsilon}}(0) \cap \{x_n=0\} \subset S^c_h[u] \cap \{x_n=0\}
	\end{equation}
for any small $\epsilon>0$. 
Similarly, this also holds for centred sub-level sets $S^c_h[v]$, for the dual potential $v$.

Let $e'$ be a unit tangential vector with $e'\perp e_n$, and let $t>0$ such that $te' \in \partial S^c_h[v]$. 
Applying \eqref{newinc1} to $v$, we have
$t \ge C^{-1} h^{\frac12+\epsilon}$. 
For any $x\in S^c_h[u]$, from \eqref{inn} 
	\begin{equation}\label{cylin}
		|x \cdot e'| \le C \frac{h}{t} \le Ch^{\frac12-\epsilon},
	\end{equation}	
which implies that $S^c_h[u]$ is contained in a vertical cylinder centred at the origin with radius $r'\le Ch^{\frac12-\epsilon}$, 
for any given small $\epsilon>0$. Hence we have proved that 
\begin{equation*}
		B_{C^{-1}h^{\frac12+\epsilon}}(0)  \cap \{x_n=0\} \subset S^c_h[u] \cap\{x_n=0\}\subset B_{Ch^{\frac12-\epsilon}}(0).
	\end{equation*}

Let $r_ne_n\in \partial S^c_h[u]$ and let $S'_h[u]$ be the projection of $S^c_h[u]$ on $\{x_n=0\}$. 
By the convexity of $u$ and noticing that $u_n\ge 0$,
	\begin{equation*}
		r_n |S'_h[u]| = C \Vol(S^c_h[u]) = Ch^{\frac{n}{2}},
	\end{equation*}	
where for the last equality we use  \eqref{gV1}. By \eqref{cylin}, $|S'_h[u]| \le Ch^{(n-1)(\frac12-\epsilon)}$, and thus we obtain
	\begin{equation}\label{rng}
		r_n \ge C h^{\frac12+\epsilon}
	\end{equation}
for a different small $\epsilon>0$. 
\eqref{rng} is also true for the dual centred sub-level set $S^c_h[v]$. 
By \eqref{newinc1}, we have, similarly to \eqref{cylin},
	\begin{equation}\label{rnl}
		|x\cdot e_n| \le Ch^{\frac12-\epsilon} \ \ \forall\  x\in S^c_h[u].
	\end{equation}
Combining \eqref{newinc1}--\eqref{rnl} we obtain  \eqref{cball}.

Next we show that there exist two constants $b_1, b_2$, independent of $u$ and $h$, such that
	\begin{equation}\label{st2}
		S^{c,+}_{b_1h} \subset D^+_{h,0}  \subset S^{c,+}_{b_2h},
	\end{equation}
where  $S^{c,+}_{h}=S^{c}_{h}[u]\cap\{x_n>0\}$, 
and $D^+_{h,0}$ is given in \eqref{newD}.
{The first inclusion can be proved similarly as that of \eqref{relate1}.
Indeed, for any $x\in \partial S^{c}_{h}\cap\{x_n=0\}$, 
by  \eqref{cball}   and since $\pom\in C^{1,1}$, we have $\dist(x,  \partial S^{c}_{h}\cap\Om) \le Ch^{1-\epsilon}$.
By \eqref{cball}  we also have $|Du| \le Ch^{\frac12-\epsilon}$ in $S^c_h$. Hence
	\begin{equation*}
		u(x) \ge Ch\qquad \forall\ x\in\partial S^{c}_{h}\cap\{x_n=0\}
	\end{equation*}
and \eqref {st2} follows. The second inclusion of \eqref{st2} also follows from \eqref{relate1}. }
 
We are ready to prove \eqref{newinc}.
Combining \eqref{cball} and \eqref{st2}, there exists a constant $C$ independent of $u$ and $h$ such that
	\begin{equation}\label{uphalf}
		B_{C^{-1}h^{\frac12+\epsilon}}(0)\cap\{x_n>0\} \subset D^+_{h,0} \subset B_{Ch^{\frac12-\epsilon}}(0)\cap\{x_n>0\}	
	\end{equation}
for any given small $\epsilon>0$. 
Since $\partial\Om\in C^{1,1}$, by the definition of $a_h$ (after \eqref{newD}), 
one has $a_h<Ch^{1-\epsilon}$ for some $\epsilon>0$ as small as we want.
Recall that $D_h=D^+_{h,a_h}\cup D^-_{h,a_h}$ and $D^-_{h,a_h}$ is an even extension of $D^+_{h,a_h}$ with respect to $\{x_n=a_h\}$.
We obtain   \eqref{newinc} from \eqref{uphalf}. 
\end{proof}

By \eqref{newinc}, we infer that 

\begin{corollary} \label{coro5.1}
For any given small $\eps>0$, $u\in C^{1, 1-\eps}(\bom).$
\end{corollary}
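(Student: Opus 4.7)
The plan is to convert the sub-level-set pinching in Lemma \ref{newgeo} into a pointwise H\"older estimate on $Du$ by a standard convexity argument. Since the radii $h^{1/2\pm \epsilon}$ of the pinching balls differ from the quadratic case only by an $\epsilon$-loss, this loss passes directly to the H\"older exponent on $Du$, giving $C^{1,1-\epsilon}$ for any small $\epsilon>0$.

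First I establish an upper growth bound for $u$ near each boundary point. Fix $x_0\in \pom$, which after translation may be taken to be $0$, and subtract a support hyperplane so that $u(0)=0$, $Du(0)=0$, and $u\ge 0$. The inner inclusion $B_{C^{-1}h^{1/2+\epsilon}}(a_h e_n)\subset D_h$ from Lemma \ref{newgeo}, together with $a_h\le Ch^{1-\epsilon}=o(h^{1/2+\epsilon})$ for $\epsilon<1/4$, implies $u(y)\le h$ for all $y\in\bom$ with $|y|\le c h^{1/2+\epsilon}$ (points with $y_n<a_h$ are handled by the monotonicity $u_n\ge 0$, comparing $u(y',y_n)$ with $u(y',a_h)$). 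Inverting this yields the pointwise bound
\[
u(y)\le C|y|^{2/(1+2\epsilon)}\qquad \text{for all }y\in \bom\text{ near }0.
\]

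For the gradient H\"older estimate, fix $x\in\Omega$ near $0$ and set $p:=Du(x)\in\overline{\Omega^*}$. Choose a comparison point $z=x+|x|\bigl(p/|p|+\delta e_n\bigr)$, where the small tilt $\delta=O(|x|)$ is selected so that $z\in\bom$; this is possible thanks to the $C^{1,1}$ regularity of $\pom$ together with $p_n\ge 0$ for $p\in\overline{\Omega^*}$ near $0\in \partial\Omega^*$. The subgradient inequality $u(z)\ge u(x)+p\cdot(z-x)$ together with $u\ge 0$ yields
\[
|x|\cdot|p|\le |x|\bigl(|p|+\delta p_n\bigr)\le u(z)\le C|x|^{2/(1+2\epsilon)},
\]
hence $|Du(x)|\le C|x|^{1-C\epsilon}$ where $C\epsilon\to 0$ as $\epsilon\to 0$. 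Since the constants are uniform in the boundary point, combining this pointwise boundary estimate with the interior $C^{1,\alpha}$ regularity of Caffarelli \cite{C92} gives $u\in C^{1,1-\epsilon}(\bom)$ for any small $\epsilon>0$. The only slightly delicate step is the construction of the tilted comparison point $z\in\bom$ (using that $p_n\ge 0$ keeps the leading contribution $|x|\cdot|p|$ intact while $\delta e_n$ pushes $z$ safely into $\Omega$); once this is in place, the rest is routine convexity plus patching with the interior estimate.
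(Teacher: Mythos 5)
Your argument is correct and fills in a proof that the paper omits (the paper simply writes ``By \eqref{newinc}, we infer that \ldots''). The route you take---turn the inner inclusion $B_{C^{-1}h^{1/2+\epsilon}}(a_he_n)\subset D_h$ into a pointwise upper growth bound $u(y)\le C|y|^{2/(1+2\epsilon)}$ at boundary points (using the reflection and $u_n\ge 0$ to handle the half-space $x_n<a_h$, and $a_h=O(h^{1-\epsilon})=o(h^{1/2+\epsilon})$), then convert that to a gradient bound via the subgradient inequality at an interior point $x$ with a suitably placed comparison point $z$---is the standard and, I believe, intended reduction. Notice that you only use the \emph{inner} inclusion from Lemma \ref{newgeo}; the outer one (a lower growth bound, i.e.\ quantitative strict convexity) is not needed for the upper bound on $|Du|$.

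Two small remarks. First, the construction of $z$ can be simplified: as the inclusion is into $D_h$, the bound $u\le h$ already holds for the \emph{extended} convex function on the whole ball $B_{C^{-1}h^{1/2+\epsilon}}(a_he_n)$ (using $u_n\ge0$ to pass to the reflected half). So you do not actually need the tilt $\delta e_n$ to force $z\in\bom$---taking $z=x+|x|\,p/|p|$ already works once one notes the growth bound applies to the extension. Your version is fine too, since $p_n\ge0$ preserves the leading term $|x||p|$, but the tilt adds a distraction. Second, the attribution at the end is slightly off: \cite{C92} is the global $C^{1,\delta}$ result for a fixed $\delta$, whereas the interior $C^{1,1-\eps}$ (for arbitrarily small $\eps$) that you need for the interior patching comes from Caffarelli's interior $W^{2,p}$ estimates \cite{C1} combined with Sobolev embedding, given $f\in C^0$. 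With that citation adjusted and the (standard but unspelled) interior--boundary patching carried out at scale $d=\dist(x,\pom)$, the proof is complete.
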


From the above $C^{1,\alpha}$ regularity for all $\alpha<1$, we can prove the global $W^{2,p}$ regularity (Theorem \ref{main01}).
As mentioned in the introduction, the global $W^{2,p}$ estimate for the problem \eqref{MA1}--\eqref{bdry}
was obtained  in \cite{CF}, using the estimates of Caffarelli in \cite{C96}.
Hence the domains are the uniform convexity with $C^2$ boundaries in \cite {CF}.
By our estimates above, we can remove the uniform convexity condition and reduce the smoothness assumption on domains.  

\begin{proof}[Proof of Theorem \ref{main01}]
The proof is based on the estimate \eqref{newinc} and uses the argument of Savin \cite{Sa1}, see also \cite{CF}.
For completeness, let us outline the main steps.
Given $x\in\Om$, let $\bar h_x$ be the maximal value of $h$ such that $S_h[u](x)\subset\Om$, i.e.
	\begin{equation*}
		\bar h_x := \max\{h\geq0 : S_h(x)\subset\Om\}.
	\end{equation*}
Let $T$ be a unimodular linear transform such that $T(S_{\bar h_x}[u](x)) \sim B_{\bar h^{1/2}_x}$.
By \eqref{newinc}, one has $\|T\|, \|T^{-1}\| \lesssim \bar h_x^{-\eps}$, for any small $\eps>0$. 
Hence, when $\bar h_x$ is small
	\begin{equation}\label{sav1}
		S_{\bar h_x}[u](x) \subset D_{C\bar h^{1/2}_x} := \{z\in\overline\Om : \dist(z,\pom) \leq C\bar h^{1/2-\eps}_x\} 
	\end{equation}
for any small $\eps>0$.
	
By subtracting a linear function we may assume that $x=0$, $u(0)=0$ and $Du(0)=0$. 	
Let
	\begin{equation*}
		\tilde u(x) = \frac{1}{\bar h}u(\bar h^{1/2}T^{-1}x),
	\end{equation*}
where $x\in \tilde S_1(0) = \bar h^{-1/2}T(S_{\bar h}(0))$.
The interior $W^{2,p}$ estimate for $\tilde u$ in $\tilde S_1(0)$ gives $\int_{\tilde S_{1/2}(0)}\|D^2\tilde u\|^p dx \leq C$;
hence by rescaling 
	\begin{equation}\label{sav2}
		\int_{S_{\bar h/2}(0)}\|D^2u\|^p\, dx = \int_{\tilde S_{1/2}(0)}\|T'D^2\tilde u T\|^p \bar h^{n/2}\,dx \leq C\bar h^{\frac{n}{2}-2\eps p}.
	\end{equation}	

From Vitali covering lemma, there exists a sequence of disjoint sub-level sets $\{S_{\delta \bar h_i}(x_i)\}$, 
$\bar h_i=\bar h_{x_i}$ such that $\Om\subset\bigcup_{i=1}^\infty S_{\bar h_i/2}(x_i)$, 
where $\delta>0$ is a small constant, (see \cite[Lemma 2.3]{Sa1}).
Then
	\begin{equation*}
		\int_\Om \|D^2u\|^p\,dx \leq \sum_i\int_{S_{\bar h_i/2}(x_i)} \|D^2u\|^p\,dx.
	\end{equation*}

Note that it suffices to consider those $\bar h_i \leq c_1$ for a small constant $c_1>0$.
We can adopt the argument of Savin \cite{Sa1}: Consider the family $\mathcal{F}_d$ of those $S_{\bar h_i/2}(x_i)$ satisfying 
	\begin{equation*}
		d/2 < \bar h_i \leq d
	\end{equation*} 
for a constant $d\leq c_1$. By \eqref{sav2} and \eqref{gV2}
	\begin{equation*}
		\int_{S_{\bar h_i/2}(x_i)} \|D^2u\|^p \, dx \leq C d^{-2\eps p} |S_{\delta\bar h_i}(x_i)|.
	\end{equation*}
By \eqref{sav1} and Vitali covering lemma, $S_{\delta\bar h_i}(x_i)\subset D_{Cd^{1/2-\eps}}$ and are disjoint. 
Hence, we have
	\begin{equation*}
		\sum_{i\in\mathcal{F}_d}\int_{S_{\bar h_i/2}(x_i)} \|D^2u\|^p \, dx \leq C d^{\frac12-\eps-2\eps p} \leq C d^{\frac12-3\eps p}.
	\end{equation*}
Let $d=c_12^{-k}$, $k=0,1,2,\cdots$, and by adding the sequence of inequalities, we obtain 	
	\begin{equation*}
		\int_\Om \|D^2u\|^p\,dx \leq C + C_1\sum_{k=0}^\infty 2^{-k(\frac12-3\eps p)}.
	\end{equation*}
For any $p\geq1$, as $\eps$ is arbitrarily small so that $3\eps p < \frac14$, therefore the series is convergent. 
\end{proof}

Now we continue with the proof of global $C^{2,\alpha}$ estimate. 
Let $w$ be the solution of 
	  \begin{eqnarray*}
	     \det\, D^2 w=1 &&\mbox{in } D_h , \\
	     w=h &&\mbox{on }\partial  D_h.
	  \end{eqnarray*}  	  
Denote by $\tilde u$  the even extension of $u$ with respect to $\{x_n=0\}$, namely
	\begin{equation*}
		\tilde u(x', x_n) = \Big\{\begin{array}{ll}
			u(x',x_n) & \quad\mbox{ if } x_n\ge0 ,  \\
			u(x',-x_n) & \quad\mbox{ if } x_n<0.
		\end{array}
	\end{equation*}
For simplicity, we still denote $\tilde u$ by $u$.
The following lemma gives an estimate on the difference between the \textit{``original"} solution $u$ and the \textit{``good"} solution $w$.
	  
\begin{lemma}\label{lemL}
Assume $|f-1| \le h^\delta$ in $D_h\cap\Om$ for some $\delta\in(0,1/2)$. We have
	\begin{equation}\label{wL}
		|u-w| \le Ch^{1+\delta}\qquad\mbox{in } D_h\cap\Om,
	\end{equation}
where the constant $C$ is independent of $h, \delta$. 
\end{lemma}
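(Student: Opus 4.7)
The plan is to apply Monge-Amp\`ere comparison with the one-parameter family of barriers
\begin{equation*}
w^\pm := (1\mp h^\delta)^{1/n}(w-h)+h,
\end{equation*}
which solve $\det D^2 w^\pm = 1\mp h^\delta$ in $D_h$ with $w^\pm=h$ on $\partial D_h$. Pogorelov-type estimates on the normalised ball $D_h\sim B_{h^{1/2}}$ (Lemma~\ref{newgeo}) give $|w-h|\le Ch$, so Taylor expansion yields $|w^\pm-w|\le Ch^{1+\delta}$ throughout $D_h$. It therefore suffices to sandwich $w^-\le u\le w^+$ in $D_h\cap\Omega$, up to an additive error negligible against $h^{1+\delta}$.

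To feed the comparison with the right boundary data, first I would show $|u-h|\le Ch^{3/2-2\epsilon}$ on $\partial D_h$ for any small $\epsilon>0$. On the upper piece $\partial D_h\cap\{x_n>a_h\}$ this is immediate because $u=h$ there by the definition of $D_h^+$. For a point $p\in\partial D_h$ in the reflected piece $\{x_n<a_h\}$, its mirror $\hat p=(p',2a_h-p_n)\in\partial D_h^+$ satisfies $u(\hat p)=h$ and lies within distance $2a_h\le Ch^{1-\epsilon}$ of $p$ along $e_n$; combining this with the $C^{1,1-\epsilon}$ bound $\|Du\|_{L^\infty(D_h)}\le Ch^{1/2-\epsilon}$ (Corollary~\ref{coro5.1}), and accounting for the even extension across $\{x_n=0\}$ when $p_n<0$ by a further $e_n$-shift of the same order, gives $|u(p)-h|\le Ch^{3/2-2\epsilon}$.

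For the upper bound $u\le w+Ch^{1+\delta}$ in $D_h\cap\Omega$, the convex even extension $\tilde u$ (convex thanks to $u_n(x',0^+)\ge 0$, itself a consequence of $Du(0)=0$ and $\Omega^*\subset\{y_n\ge 0\}$) has Monge-Amp\`ere measure satisfying $\mu_{\tilde u}\ge\tilde f\,dx\ge(1-h^\delta)\,dx=\mu_{w^+}$ on the absolutely continuous part, with a possibly positive singular contribution on $\{x_n=0\}$ which only helps this direction. Together with $\tilde u\le w^+ +Ch^{3/2-2\epsilon}$ on $\partial D_h$, Alexandrov's maximum principle then gives $\tilde u\le w^+ +Ch^{3/2-2\epsilon}\le w+Ch^{1+\delta}$ in all of $D_h$, hence in $D_h\cap\Omega$.

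The lower bound $u\ge w-Ch^{1+\delta}$ is the delicate step and the main obstacle: a symmetric argument applied to $w^-$ is obstructed by the potentially positive singular part of $\mu_{\tilde u}$ on $\{x_n=0\}$, which destroys the required inequality $\mu_{\tilde u}\le\mu_{w^-}$. To sidestep this I would restrict the comparison to $D_h^+=D_h\cap\Omega\cap\{x_n>a_h\}$, on which $\mu_u=f\,dx$ has no singular part; the internal flat face $\{x_n=a_h\}\cap D_h^+$ is then handled by exploiting the symmetry of $w$ across $\{x_n=a_h\}$ (so $w_n=0$ on this face) together with $u_n\ge 0$ (again from $\Omega^*\subset\{y_n\ge 0\}$), turning the comparison into a mixed Dirichlet-Neumann problem whose Alexandrov-type estimate yields $u\ge w^- -Ch^{3/2-2\epsilon}\ge w-Ch^{1+\delta}$ on $D_h^+$.
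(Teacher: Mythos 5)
Your high-level plan—build barriers $w^\pm=(1\mp h^\delta)^{1/n}(w-h)+h$, control the mismatch of $u$ against $h$ on $\partial D_h$ at order $h^{3/2-2\epsilon}$, and run a comparison in a domain that handles the cap $\{x_n=a_h\}$ via symmetry of $w$ and $u_n\ge 0$—is the same strategy as the paper. The paper, however, does \emph{both} comparisons as a mixed Dirichlet--Neumann comparison in $D_h^+\subset\Omega$ (then transfers to $D_h^-\cap\Omega$ by the Lipschitz bound you also use), and it adds a small linear term to the lower barrier. Two of your steps, as written, do not go through.

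\textbf{Upper bound via the global even extension.} You compare $\tilde u$ (even extension across $\{x_n=0\}$) with $w^+$ in all of $D_h$, using the claim $\mu_{\tilde u}\ge(1-h^\delta)\,dx$ on the absolutely continuous part. This is false on $D_h\setminus(\overline\Omega\cup\overline{\Omega'})$, where $\Omega'$ is the mirror of $\Omega$ across $\{x_n=0\}$: the extension of $u$ introduced in Section~2 has $\mu_u\equiv 0$ outside $\overline\Omega$ (its total mass is $|\Omega^*|=\mu_u(\Omega)$), and the reflected picture inherits the same zero set. The offending slab $\{|x_n|<\rho(x')\}\cap D_h^-$ is thin but nonempty, and the singular mass of $\tilde u$ on $\{x_n=0\}$ lives on a hyperplane and cannot compensate an absolutely continuous deficit. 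The comparison $\tilde u\le w^++C$ therefore does not follow. The paper avoids this entirely by comparing $u$ with $\hat w=w^+$ in $D_h^+$ only: there $\mu_u=f\,dx$ with no bad region, the Dirichlet part is $\mathcal C_1$ where $u=\hat w=h$, and on $\mathcal C_2=\{x_n=a_h\}$ the Neumann inequality $D_n\hat w=0\le D_nu$ has the \emph{correct} sign for this direction (by Hopf's lemma a positive max of $u-\hat w$ on $\mathcal C_2$ forces $\partial_n(u-\hat w)<0$, contradicting $D_nu\ge0$).

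\textbf{Lower bound without the linear correction.} You restrict to $D_h^+$, note $D_nw^-=0$, $D_nu\ge 0$ on $\mathcal C_2$, and invoke an ``Alexandrov-type estimate'' to get $u\ge w^--Ch^{3/2-2\epsilon}$. But for this direction the Neumann inequality has the \emph{wrong} sign: a positive max of $w^--u$ on $\mathcal C_2$ is consistent with $\partial_n(w^--u)=-D_nu\le 0$, so the max principle gives nothing, and I do not see a standard Alexandrov estimate that rescues it. If one instead reflects $u|_{D_h^+}$ evenly across $\{x_n=a_h\}$ and runs a Dirichlet comparison in $D_h$, the reflection carries a positive singular Monge--Amp\`ere measure on $\mathcal C_2$ of total mass roughly $h^{(n+1)/2-n\epsilon}$ (thickness of the subgradient jump $\sim D_nu\lesssim h^{1-\epsilon}$ times the $(n-1)$-dimensional measure of $D'u(\mathcal C_2)\lesssim h^{(n-1)/2}$); feeding this into the Alexandrov estimate only yields an error of order $h^{1+\frac{1}{2n}}$, which does \emph{not} beat $h^{1+\delta}$ for $\delta$ close to $1/2$. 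The paper's actual device is to add the linear correction $C_1(x_n-Ch^{1/2-\epsilon})h^{1-\epsilon}$ to the lower barrier, producing $\check w$. This term is $\le 0$ on $\mathcal C_1$ (since $x_n\le Ch^{1/2-\epsilon}$ in $D_h$), has $D_n\check w=C_1h^{1-\epsilon}>D_nu$ on $\mathcal C_2$ (using $0\le D_nu\le Ch^{1-\epsilon}$ there, which comes from $u_n=\rho^*(D'u)\lesssim|D'u|^2$ on $\partial\Omega$ plus the $C^{1,1-\epsilon}$ bound over the distance $a_h\lesssim h^{1-\epsilon}$), and perturbs $w^-$ only by $O(h^{3/2-2\epsilon})$, which is absorbed once $\delta<1/2-2\epsilon$. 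That linear term is exactly the missing ingredient in your argument; your bound on the boundary-data mismatch $|u-h|\le Ch^{3/2-2\epsilon}$ on the reflected part of $\partial D_h$ is correct and matches the paper's concluding step for $D_h^-\cap\Omega$, but it is the Neumann side that needs the correction, not the Dirichlet side.
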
	  
	  
\begin{proof}
Divide $\partial D_h^+ = \mathcal{C}_1\cup\mathcal{C}_2$ into two parts, where $\mathcal{C}_1\subset\{x_n> a_h\}$ and $\mathcal{C}_2\subset\{x_n= a_h\}$. 
On $\mathcal{C}_1$ we have $u=w$. 
On $\mathcal{C}_2$, by symmetry we have $D_nw=0$. 
As $a_h<Ch^{1-\epsilon}$, by Corollary \ref{coro5.1} we have $0\leq D_nu \leq C_1h^{1-\epsilon}$ on $\mathcal{C}_2$, 
for any given small $\epsilon>0$.

Let 
$$\hat w = (1-h^\delta)^{1/n}w-(1-h^\delta)^{1/n}h+h. $$ 
Then
	\begin{align*}
		\det\,D^2\hat w \leq \det\,D^2 u & \quad\mbox{ in }  D_h^+,\\
		\hat w = u = h &\quad\mbox{ on }\mathcal{C}_1, \\
		D_n\hat w =0 < D_n u &\quad\mbox{ on }\mathcal{C}_2.
	\end{align*}
By comparison principle we have $\hat w \geq u$ in $D_h^+$. 

On the other hand, let 
$$\check w = (1+h^\delta)^{1/n}w-(1+h^\delta)^{1/n}h+h+C_1(x_n-Ch^{1/2-\epsilon})h^{1-\epsilon}. $$
Then 
	\begin{align*}
		\det\,D^2\check w \geq \det\,D^2 u & \quad\mbox{ in }  D_h^+,\\
		\check w \leq u =h &\quad\mbox{ on }\mathcal{C}_1, \\
		D_n\check w = C_1h^{1-\epsilon} > D_n u &\quad\mbox{ on }\mathcal{C}_2.
	\end{align*}
Hence by comparison principle,  $\check w \leq u$ in $ D_h^+$.

Since $h>0$ is small, $\delta<1/2$, and $\epsilon>0$ is  small, we obtain
	\begin{equation}\label{coreL}
		|u-w| \leq Ch^{1+\delta} \qquad\mbox{in } D_h^+. 
	\end{equation}

Next, we estimate $|u-w|$ in $ D_h^-\cap\hskip1.5pt \Omega$. 
For $x=(x',x_n)\in D_h^-\cap\hskip1.5pt \Omega$, let $z=(x', 2a_h-x_n) \in D_h^+$.
Then  $|x-z|\leq Ch^{1-\epsilon}$. 
From \eqref{coreL}, $|u(z)-w(z)| \leq Ch^{1+\delta}$.
Since $w$ is symmetric with respect to $\{x_n=a_h\}$, we have  $w(x)=w(z)$.
Since $u\in C^{1,1-\epsilon}(\bom)$, we obtain
	\begin{equation*}
		|u(x)-u(z)| \leq \|Du\|_{L^\infty(D_h)} |x-z| \leq Ch^{3/2-\epsilon}.
	\end{equation*}
Therefore, as $\delta<1/2$ is a given constant 
	\begin{equation*}
		|u(x)-w(x)| \leq |u(x)-u(z)| + |u(z)-w(z)| \leq Ch^{1+\delta}.
	\end{equation*}
Combining with \eqref{coreL} we thus obtain the desired $L^\infty$ estimate
	\begin{equation}\label{coreLL}
		|u-w| \leq Ch^{1+\delta}\qquad\mbox{in }  D_h\cap\Omega. 
	\end{equation}

\vskip-25pt \end{proof}

We are now in position to prove the global $C^{2,\alpha}$ estimate.
We will adopt the argument in  \cite{JW}.
Note that when $f$ is H\"older continuous with exponent $\alpha\in (0,1)$ 
and $f(0)=1$, from Lemma \ref{newgeo} the oscillation 
	\begin{equation}\label{5002}
		\omega_f(h) := \sup_{D_h\cap\Om}|f-1|\leq Ch^\delta   
	\end{equation}
for some
	\begin{equation}\label{5014}
	 \delta\ge \alpha/2-\eps ,
	 \end{equation}
where $\eps>0$ is a small constant arising in \eqref{newinc}.
We point out that if $\eps=0$ in \eqref{newinc}, then $\delta=\alpha/2$.
We first quote two lemmas from \cite{JW}.

\begin{lemma}\label{pl2}
Let $u\in C^2$ be a convex solution of $\det\, D^2u=1$ in $\mathcal D$, vanishing on $\partial\mathcal D$.
Suppose $u$ attains its minimum at the origin, and $|D^2u(0)|\leq C_0$ for some constant $C_0>0$.  
Then the domain $\mathcal D$ is of good shape.  
\end{lemma}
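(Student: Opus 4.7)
The plan is to combine John's lemma with the interior Pogorelov/Caffarelli-type estimate for normalized Monge-Amp\`ere solutions, and then to transfer information about the Hessian back and forth across an affine normalization. The key observation is that the hypothesis $|D^2u(0)|\le C_0$ together with $\det D^2u(0)=1$ already pins down the full spectrum of $D^2u(0)$ to within universal constants; and on a normalized target domain the same spectral control at the minimum point is available unconditionally.

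First I would note that if $\lambda_1\le\cdots\le\lambda_n$ are the eigenvalues of $D^2u(0)$, then $\lambda_n\le C_0$ by hypothesis and $\lambda_1\ge C_0^{-(n-1)}$ because $\prod\lambda_i=1$; hence $c_0\,I\le D^2u(0)\le C_0\,I$ as symmetric matrices. Next, let $E$ be John's ellipsoid of $\Omega$, centered at some $z_0$, so that $E\subset\Omega\subset nE$, and pick a unimodular affine map $T(x)=A(x-z_0)$ with $T(E)$ equal to some ball $B_r$. Setting $\tilde\Omega=T(\Omega)$ and $\tilde u(y)=u(T^{-1}y)$, the function $\tilde u$ still solves $\det D^2\tilde u=1$ in $\tilde\Omega$ with $\tilde u=0$ on $\partial\tilde\Omega$, and by construction $\tilde\Omega$ has good shape.

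I would then invoke Caffarelli's interior estimate on the normalized domain $\tilde\Omega$ to obtain $cI\le D^2\tilde u(T(0))\le CI$ for universal constants $c,C$. Comparing the two Hessians through the identity
\[
D^2\tilde u(T(0))=(A^{-1})'\,D^2u(0)\,A^{-1},
\]
and using the two-sided bounds on both sides, one concludes that $(A^{-1})'A^{-1}$ is squeezed between two positive multiples of $I$; equivalently, the singular values of $A$ are all mutually comparable. Combined with $|\det A|=1$, every singular value of $A$ is of order $1$, so $E=A^{-1}(B_r)+z_0$ is a bounded distortion of a ball, i.e.\ $E\sim B$. Since $E\subset\Omega\subset nE$, the same holds for $\Omega$, which is the desired good shape.

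The main obstacle is legitimately applying the interior estimate at $y_0=T(0)$: one has to verify that $y_0$ sits at a universal distance from $\partial\tilde\Omega$ and that the Hessian of $\tilde u$ there admits a universal two-sided bound. Both rest on Caffarelli's theory for normalized Monge-Amp\`ere solutions---the ABP inequality forces $|\min\tilde u|\approx 1$, which places the minimum point well inside $\tilde\Omega$ and unlocks the Pogorelov-type upper bound on $D^2\tilde u(y_0)$, with the matching lower bound coming again from $\det D^2\tilde u=1$. This is essentially the content of the cited result in \cite{JW}.
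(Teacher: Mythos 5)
Your proof is correct. The paper does not supply a proof of this lemma; it is quoted from the reference \cite{JW}, so there is no argument in the paper to compare against. Your route — pin down the spectrum of $D^2u(0)$ from the determinant constraint plus the upper bound, normalize by John's ellipsoid with a unimodular affine map $T$, obtain a universal two-sided Hessian bound at the interior minimum of the normalized solution via the Aleksandrov estimate (which keeps the minimum at universal distance from the boundary) and the Pogorelov interior estimate, and then read off the distortion of $A$ from the conjugation identity $D^2\tilde u(T(0))=(A^{-1})'D^2u(0)A^{-1}$ together with $|\det A|=1$ — is exactly the standard normalization argument one expects for this statement, and each step closes.

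One small point worth spelling out: after mapping John's ellipsoid to a ball $B_r$, the domain $\tilde\Omega$ is of good shape but not necessarily of unit scale, whereas the interior estimate is usually stated on domains comparable to $B_1$. To make the invocation literal, one should further perform the parabolic rescaling $\hat u(z)=r^{-2}\tilde u(rz)$, $\hat\Omega=r^{-1}\tilde\Omega$, which preserves $\det D^2=1$, makes $\hat\Omega$ unit scale, and, crucially, leaves the Hessian unchanged ($D^2\hat u(z)=D^2\tilde u(rz)$). Hence the universal bound $cI\le D^2\tilde u(T(0))\le CI$ you use is indeed scale-free, and the rest of the argument goes through as you wrote it.
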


\begin{lemma}\label{pl3}
Let $u_i$, $i=1,2$, be two convex solutions of $\det\, D^2u=1$ in $B_1(0)$.
Suppose $\|u_i\|_{C^4}\leq C_0$.
Then if $|u_1-u_2|\leq\delta$ in $B_1(0)$ for some constant $\delta>0$, we have, for $1\leq k\leq 3$,
	\[ |D^k(u_1-u_2)| \leq C\delta \quad\mbox{in } B_{1/2}. \]
\end{lemma}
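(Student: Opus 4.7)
The plan is to linearise the Monge-Amp\`ere equation about the pair $(u_1,u_2)$ to produce a homogeneous linear elliptic equation for $w := u_1 - u_2$, and then apply interior Schauder estimates. Using the identity $\det A - \det B = \int_0^1 \frac{d}{dt}\det(tA+(1-t)B)\,dt$, one computes
\[
0 = \det D^2 u_1 - \det D^2 u_2 = a^{ij}(x)\, D_{ij}w(x),
\]
where $a^{ij}(x) := \int_0^1 U^{ij}_t(x)\,dt$ and $U^{ij}_t$ denotes the $(i,j)$-entry of the cofactor matrix of $D^2(tu_1 + (1-t)u_2)$. This is a second-order linear PDE for $w$ with no lower-order terms and zero right-hand side.

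Next I would verify that the operator $a^{ij}D_{ij}$ is uniformly elliptic with smooth coefficients. Since $\|u_i\|_{C^2}\le C_0$ we have $D^2u_i \le C_0 I$; combined with $\det D^2 u_i = 1$, the eigenvalues of $D^2 u_i$ are pinched in $[C_0^{-(n-1)},\,C_0]$. By convexity of the positive-definite cone, the interpolant $tD^2u_1+(1-t)D^2u_2$ inherits analogous two-sided bounds, and so does its cofactor matrix. Averaging in $t$, we conclude that $(a^{ij})$ is uniformly elliptic with constants depending only on $n$ and $C_0$. The $C^{1,\alpha}$ (indeed $C^2$) regularity of $a^{ij}$ follows from $\|u_i\|_{C^4}\le C_0$, because each $U^{ij}_t$ is a polynomial in the entries of $D^2u_1$ and $D^2u_2$, which are themselves of class $C^2$.

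Finally, I would invoke interior Schauder estimates for the homogeneous equation $a^{ij}D_{ij}w=0$ on $B_1(0)$: with coefficients in $C^{1,\alpha}$ and uniform ellipticity constants depending only on $n$ and $C_0$, one has
\[
\|w\|_{C^{3,\alpha}(B_{1/2})} \le C\,\|w\|_{L^\infty(B_1)} \le C\delta,
\]
where $C$ depends only on $n$ and $C_0$. In particular $|D^k w|\le C\delta$ in $B_{1/2}$ for $k=1,2,3$, which is exactly the claim. The only mildly delicate step is the uniform ellipticity check, and that hinges on the simple algebraic observation that $\det D^2 u_i = 1$ together with the $C^2$ bound pins down both upper and lower eigenvalue bounds for $D^2 u_i$; once that is in hand, the remainder is a direct application of standard linear elliptic Schauder theory, with no difficulty arising from the structure of the Monge-Amp\`ere operator itself.
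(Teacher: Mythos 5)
Your proof is correct and is the standard argument; the paper cites \cite{JW} for this lemma without reproducing a proof, and your route is precisely the one used there. Writing $w=u_1-u_2$ as a solution of the homogeneous linearized equation $a^{ij}D_{ij}w=0$ with $a^{ij}=\int_0^1 U^{ij}_t\,dt$, pinning the ellipticity constants from $\det D^2u_i=1$ together with $\|D^2u_i\|\le C_0$ (and passing them to the interpolant by convexity of the positive-definite cone), extracting the $C^{1,\alpha}$ bound on $a^{ij}$ from $\|u_i\|_{C^4}\le C_0$, and then invoking interior Schauder to bound $\|w\|_{C^{3,\alpha}(B_{1/2})}$ by $\|w\|_{L^\infty(B_1)}\le\delta$ are all correct steps with constants depending only on $n$ and $C_0$.
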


\begin{proof}[\bf Proof of Theorem \ref{main}]
We sketch the proof here as it is similar to that in \cite {JW}.
Choose a sufficiently small initial height $h_0>0$, and normalise $D_{h_0}$ by a transformation $T$ such that
$T(D_{h_0})\sim B_1(0)$.

After the change, $D_1$ has a good shape. Denote
	\begin{equation}\label{osci1}
		\omega(h) = \omega_f(h),\qquad \omega_k := \omega(4^{-k})
	\end{equation}
for $k=0,1,2,\cdots$, where $\omega_f(h)$ is given in \eqref{5002}.
Define
	\begin{equation}\label{osci2}
		\mathcal D_k := D_{4^{-k}}, \qquad f_k := \inf_{\mathcal D_k\cap \Omega} f > 0.
	\end{equation}

We \emph{claim} that $\mathcal D_k \sim \mathcal D_{k+1}$, namely there is a constant $C$ depending only on $n$ such that 
\beq\label{anmm}
	C^{-1}\mathcal D_k \subset \mathcal D_{k+1} \subset C\mathcal D_k.
\eeq
To see this, note that (before the change $T$) by \eqref{newinc} and $a_{h_0} < Ch_0^{1-\epsilon}$, one has $|D_{h_0}|\approx |S_{h_0}[u]| \approx h_0^{n/2}$ and $|D_{h_0/4}\cap\{x_n>a_{h_0}\}| \approx h_0^{n/2}$. 
Since $|\det\,T|\approx h_0^{-n/2}$, we have $|T(D_{h_0/4}\cap\{x_n>a_{h_0}\})|\approx 1$.
By definition $D_{h_0/4}\cap\{x_n>a_{h_0}\} \subset D_{h_0}$, thus $T(D_{h_0/4}\cap\{x_n>a_{h_0}\})$ is bounded.
Therefore, by symmetry and the fact $a_{h_0} < Ch_0^{1-\epsilon} \ll h^{\frac12+\epsilon}$ that is the width of $D_{h_0/4}$ in $e_n$ direction, we obtain $\mathcal D_1 \sim \mathcal D_0$. Similarly we can obtain \eqref{anmm} for all $k=0,1,2,\cdots$.

Let $u_k$, $k=0,1,2,\cdots$, be the convex solution of
	\begin{align}
		\det\,D^2 u_k = f_k &\qquad\mbox{in }\mathcal D_k,\\
		u_k = 4^{-k} & \qquad\mbox{on }\partial \mathcal D_k. \nonumber
	\end{align}

When $k=0$, since initially $\mathcal D_0$ has a good shape, by interior regularity \cite{GT},
	\begin{equation*}
		\|u_0\|_{C^4(D_{3/4})}\leq C.
	\end{equation*}
From Lemma \ref{lemL}, 
	\begin{equation*}
		\sup_{\mathcal D_0\cap \Om}|u-u_0| \leq C\omega_0,
	\end{equation*}
which implies that $\mathcal D_1$ has a good shape (also shown in \eqref{anmm}), and thus
	\[ \|u_1\|_{C^4(D_{3/16})} \leq C. \]
Hence, from Lemma \ref{pl3} and \eqref{anmm}
	\begin{equation}\label{in1}
		|D^2u_0(x)-D^2u_1(x)| \leq C\omega_0
	\end{equation}
for $x\in C^{-1}\mathcal D_2$, where $1\leq k\leq 3$. By Lemma \ref{pl2}, this estimate then implies that $\mathcal D_2$ has a good shape.

By induction and \eqref{wL}, \eqref{anmm}, we obtain
	\begin{equation}\label{in2}
		|D^2u_k(x)-D^2u_{k+1}(x)| \leq C\omega_k
	\end{equation}
for $x\in C^{-1}\mathcal D_{k+2}$.

Therefore, for any given point $z\in\bom$ near the origin such that $4^{-k-4}\leq u(z)\leq 4^{-k-3}$,
	\begin{equation}\label{in3}
	\begin{split}
		& |D^2u(z)-D^2u(0)| \leq I_1 + I_2 + I_3 := \\
		& |D^2u_k(z)-D^2u_k(0)| + |D^2u_k(0)-D^2u(0)| + |D^2u(z)-D^2u_k(z)|. 
	\end{split}
	\end{equation}
By \eqref{in2},
	\begin{equation}\label{in4}
		I_2 \leq C\sum_{j=k}^\infty \omega_j \leq C\int_0^{|z|} \frac{\omega(r)}{r}.
	\end{equation}
Similarly to \eqref{in4}, as in \cite{JW} one can derive that
	\begin{equation}\label{in44}
		I_3 \leq C\int_0^{|z|} \frac{\omega(r)}{r}.
	\end{equation}	
To estimate $I_1$, denote $h_j=u_j-u_{j-1}$. By Lemma \ref{pl3},
	\begin{equation*}
		\left| D^2h_j(z) - D^2h_j(0) \right| \le C2^j\omega_j|z|.
	\end{equation*}
Hence
	\begin{equation}\label{in5}
	\begin{split}
		I_1 &\le |D^2u_0(z)-D^2u_0(0)| + \sum_{j=1}^k |D^2h_j(z) - D^2h_j(0)| \\		
			&\leq C|z|\Big( 1+ \int_{|z|}^1\frac{\omega(r)}{r^2} \Big).
	\end{split}
	\end{equation}
Since $f$ is H\"older continuous with exponent $\alpha$, 
inserting \eqref{in4}--\eqref{in5} into \eqref{in3} we obtain the H\"older continuity at the origin, namely for any point $z$ near the origin,
	\begin{equation}\label{newS}
		|D^2u(z)-D^2u(0)| \le C|z|^\alpha.
	\end{equation}

In \eqref{newS}, we obtained the H\"older continuity of $D^2 u$ at the boundary. 
If two points $x, y\in\Om$ are both interior points, let $\hat x, \hat y\in\partial\Om$ be the closest points to $x, y$, respectively.  
In the case $|x-y| \ge \delta_0\big(\dist(x,\partial\Om) + \dist(y,\partial\Om)\big)$ for some constant $\delta_0>0$, 
 by \eqref{newS} we have
	\begin{equation*} 
	\begin{split}
		|D^2u(x)-D^2u(y)| \leq\, & |D^2u(x)-D^2u(\hat x)|  \\
			& + |D^2u(\hat x)-D^2u(\hat y)| + |D^2u(\hat y)-D^2u(y)|  \le C|x-y|^\alpha. 
	\end{split}
	\end{equation*}
Otherwise, the estimate for $|D^2u(x)-D^2u(y)|$ has been established in  \cite{C1, JW} for the interior $C^{2,\alpha}$ regularity.

We have proved the global $C^{2,\alpha-\eps}$ regularity for problem \eqref{MA1}, \eqref{bdry}.
To remove the small constant $\eps$, observe that once the second derivatives are uniformly bounded, 
the inclusions \eqref{newinc} holds for $\eps=0$. Therefore \eqref{5014} can be improved to $\delta=\alpha/2$.
Repeating the above argument, we then obtain the global  $C^{2,\alpha}$ regularity for problem \eqref{MA1}, \eqref{bdry}.
\end{proof}

\begin{remark}\label{r51}
The above argument also implies that if $f$ is Dini continuous, that is if 
	\begin{equation*}
		\int_0^1\frac{\omega_f(t)}{t}\,dt <\infty,
	\end{equation*}
where $\omega(t)=\sup\{|f(x)-f(y)| : |x-y|<t\}$, then the integrals in \eqref{in4} and \eqref{in5} are convergent. 
Hence $D^2 u$ is positive definite and continuous up to the boundary. Therefore we  have proved the following result.
\end{remark}

\begin{theorem}\label{main1}
Assume that $\Omega$ and $\Omega^*$ are bounded convex domains in $\mathbb{R}^n$ with $C^{1,1}$ boundaries, 
and  assume that $f$ is positive and Dini continuous.
Then the second derivatives of the solution $u$ to the problem \eqref{MA1} and \eqref{bdry} are continuous in $\bom$.
\end{theorem}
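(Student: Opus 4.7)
The plan is to recycle the iterative scheme built in the proof of Theorem~\ref{main}; only the final summation step needs to be modified. Normalise $f$ so that $f(0)=1$ and set $\omega(h):=\sup_{D_h\cap\Om}|f-1|$. Lemma~\ref{newgeo} gives $D_h\subset B_{Ch^{1/2-\eps}}(x_h)$, so Dini continuity of $f$ (i.e.\ $\int_0^1 \omega_f(t)/t\,dt<\infty$) is inherited by $\omega$ after a logarithmic change of variable, in the sense that $\int_0^1 \omega(h)/h\,dh<\infty$ and equivalently $\sum_{k\ge 0}\omega_k<\infty$ where $\omega_k:=\omega(4^{-k})$.

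All the intermediate steps of the proof of Theorem~\ref{main} depend only qualitatively on the modulus and carry over unchanged: Lemma~\ref{lemL} produces a comparison solution $w$ with $|u-w|\le Ch\,\omega(h)$, Lemma~\ref{pl2} preserves the good shape of $D_k=D_{4^{-k}}$ by induction, and Lemma~\ref{pl3} together with rescaling yields the telescoping bound $|D^2u_k(x)-D^2u_{k+1}(x)|\le C\omega_k$ for $x\in D_{k+2}$. Fixing a boundary point taken to be $0$, and any nearby $z\in\bom$ with $4^{-k-4}\le u(z)\le 4^{-k-3}$, the three-term splitting \eqref{in3} is still available and the bounds \eqref{in4}, \eqref{in5} read $I_2+I_3\le C\int_0^{|z|}\omega(r)/r\,dr$ and $I_1\le C|z|\bigl(1+\int_{|z|}^1 \omega(r)/r^2\,dr\bigr)$. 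The first integral is $o(1)$ as $|z|\to 0$ by the Dini assumption; for the second I would split $\int_{|z|}^1=\int_{|z|}^{r_0}+\int_{r_0}^1$ and use monotonicity of $\omega$ to estimate $|z|\int_{|z|}^{r_0}\omega(r)r^{-2}\,dr\le \omega(r_0)$, while $|z|\int_{r_0}^1\omega(r)r^{-2}\,dr\to 0$ as $|z|\to 0$; letting $r_0\to 0$ gives $I_1=o(1)$ as well. Therefore $D^2u(z)\to D^2u(0)$ and $D^2u$ is continuous at every boundary point, with positive definiteness supplied by $\det D^2u=f>0$ combined with convexity.

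Passing to global continuity on $\bom$ is then identical to the dichotomy at the end of the proof of Theorem~\ref{main}: two interior points well separated relative to their distances to $\pom$ are connected via their nearest boundary projections and handled by boundary continuity three times, while for clustered interior points the interior Dini-$C^2$ estimate (a variant of \cite{C1, JW} obtained by the same iterative argument applied to interior sub-level sets) applies. The main obstacle is the continuity at the boundary, and within that step the only delicate point is verifying that the Dini condition is precisely what makes both $\int_0^{|z|}\omega(r)/r\,dr$ and $|z|\int_{|z|}^1\omega(r)/r^2\,dr$ vanish as $|z|\to 0$; the rest of the scheme from Theorem~\ref{main} is purely structural and requires no new ideas. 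Once $D^2 u$ is known to be bounded one can also retro-fit $\eps=0$ in \eqref{newinc}, which keeps the estimates on $\omega(h)$ sharp and closes the argument.
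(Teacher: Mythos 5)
Your proof is correct and follows essentially the same route the paper takes in Remark \ref{r51}, namely rerunning the iteration from the proof of Theorem \ref{main} with the Dini modulus in place of $h^\delta$. Your explicit verification that $|z|\int_{|z|}^1 \omega(r)r^{-2}\,dr \to 0$ (splitting at $r_0$ and using monotonicity of $\omega$) supplies a detail that the paper leaves implicit when it only asserts that the integrals in \eqref{in4} and \eqref{in5} are convergent.
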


\begin{remark}\label{r52}
Checking the proof of the uniform density (Lemma \ref{luni}),
the tangential $C^{1,\alpha}$ regularity (Lemma \ref{treg}),
the uniform obliqueness (Lemma \ref{lem obli}),
we find  that the $C^{1,1}$ regularity of the boundaries $\pom$ and $\pom^*$
can be weakened to $C^{1,1-\theta}$ for some $\theta>0$ depending on the constant $\delta$,
provided $u$ is globally $C^{1,\delta}$ smooth \cite{C92}.
Therefore  our main result, Theorem \ref{main}, 
holds for $C^{1,1-\theta}$ convex domains $\Omega, \Omega^*$.
In particular, we prove that it suffices to assume $\pom, \partial\Omega^*\in C^{1,\alpha}$ in dimension two \cite{CLW2}. 
When $f\equiv1$, very recently Savin and Yu \cite{SY} obtained the global $W^{2,p}$ estimate for arbitrary bounded convex domains $\Om, \Om^*\subset\mathbb{R}^2$. 
In general dimensions, it may be possible to relax the $C^{1,1}$ regularity of the boundaries to $C^{1,\alpha}.$ Indeed, if one can manage this relaxation for the uniform density estimate and the tangential $C^{1,1-\varepsilon}$ estimate (for all $\varepsilon>0$), then our method for the uniform obliqueness can be applied.
\end{remark}

\begin{remark}\label{r53}
From \cite [\S7.3]{MTW} it is known that for arbitrary positive and smooth functions $f$, 
the convexity of domains is necessary for the global $C^1$ regularity.
However, for a fixed function $f>0$,  by Theorem \ref{main} and a perturbation argument, 
we can prove that \cite{CLW3} the solution is smooth up to the boundary, 
if the domains $\Om$ and $\Om^*$ are smooth perturbations of convex ones, 
even though they are not convex themselves. 
\end{remark}


\end{document}